\documentclass[11pt, oneside]{article}
\usepackage[utf8]{inputenc}
\usepackage{amsmath, amsthm, amssymb}
\usepackage[dvipsnames]{xcolor}
\usepackage{enumerate, comment, multirow, parskip}
\usepackage{dsfont}
\usepackage{enumitem}
\usepackage{mathtools}

\usepackage{comment}
\usepackage{makecell}

\usepackage{subcaption}
\usepackage{todonotes}
\usepackage{multicol}
\usepackage{tikz}
\usetikzlibrary{decorations.markings}
\usetikzlibrary{decorations.pathmorphing}
\usetikzlibrary{calc,backgrounds}
\usetikzlibrary{patterns}
\usepackage{standalone}
\usepackage{mathtools,mdframed}
\usepackage{adjustbox}
\usepackage[left=1in,right=1in,top=1in,bottom=1in,bindingoffset=0cm]{geometry}

\usepackage[normalem]{ulem}

\usepackage{hyperref}
\hypersetup{colorlinks=true,linkcolor=blue,anchorcolor=blue,citecolor=green,filecolor=blue,urlcolor=blue,bookmarksnumbered=true,pdfview=FitB}

\usepackage[capitalise]{cleveref}

\usepackage{tabularx}
\usepackage{booktabs}

\definecolor{green}{RGB}{50,205,50}
\definecolor{yellow}{RGB}{240,240,10}

\newtheorem{theorem}{Theorem}[section]
\newtheorem{lemma}[theorem]{Lemma}

\newtheorem{question}[theorem]{Question}

\newtheorem*{theorem*}{Theorem}
\newtheorem*{corollary*}{Corollary}

\theoremstyle{definition}

\theoremstyle{definition}

\newtheorem{claim}{Claim}
\newenvironment{proofc}{\begin{proof}[Proof of Claim]}{\end{proof}}

\newcommand{\owen}[1]{{\color{orange}\textsc{Owen:} #1}}

\title{On the recolorability of $(2K_2, K_4)$-free graphs}

\author{
Henry Echeverr\'ia\footnote{Instituto de Ingenier\'ia Matem\'atica-CIMFAV, Universidad de Valpara\'iso, Chile.
Email: {\tt henry.echeverria@postgrado.uv.cl}. Supported by ANID BECAS/DOCTORADO NACIONAL 21231147.
}
\and 
Owen Henderschedt\footnote{Department of Mathematics, Iowa State University, Ames, IA, U.S.A.
  Email: {\tt owenhen@iastate.edu}.}}

\begin{document}
\date{}
\maketitle
\begin{abstract}
 Given a graph $G$ and an integer $\ell>\chi(G)$, the reconfiguration graph of the $\ell$-colorings of $G$ has as its vertices as the proper $\ell$-colorings of $G$, with an edge between two colorings whenever they differ on exactly one vertex. We say that $G$ is \emph{recolorable} if this reconfiguration graph is connected for every $\ell>\chi(G)$. Belavadi and Cameron determined which $(F_1,F_2)$-free graphs are recolorable whenever $F_1$ and $F_2$ are graphs on at most four vertices, with the single exception of $(2K_2,K_4)$-free graphs. Gaspers and Huang showed such graphs are $4$-colorable. The $3$-colorable case within this class has also been resolved, leaving the open question of whether every $(2K_2,K_4)$-free graph with chromatic number $4$ is recolorable. In this paper, we provide evidence toward an affirmative answer by establishing recolorability for three subclasses: $(2K_2,K_4,C_5)$-free graphs, $(2K_2,K_4,H_a,H_b)$-free graphs for any distinct $a,b\in \{2,3,4\}$, and $(2K_2,K_4,H_4)$-free graphs containing an induced $W_5$, where $H_i$ denotes the unique $2K_2$-free graph obtained from a $W_5$ by keeping exactly $i$ edges from the universal vertex to the cycle.
\end{abstract}

\section{Introduction}

Let $\mathcal{P}$ be a combinatorial problem, and let $S_{\mathcal{P}}$ be the set of feasible solutions of $\mathcal{P}$. Given a set of transformation rules that allow us to transform one solution into another, the \textit{Reconfiguration Problem} asks whether any two solutions in $S_{\mathcal{P}}$ can be linked by a sequence of allowed transformations, with each step yielding a feasible solution. Equivalently, one defines the \textit{reconfiguration graph} of $\mathcal{P}$, whose vertex set is $S_{\mathcal{P}}$ and whose edges join solutions that differ by one transformation, and asks whether this graph is connected. For background on reconfiguration problems and their complexity, we refer to the survey of van den Heuvel \cite{vandenHeuvel2013} and the recent overview by Nishimura \cite{Nishimura2018}.

In this paper, we take $\mathcal{P}$ to be one of the more well-studied reconfiguration problems in graph theory: \textit{proper $\ell$-colorings of a graph $G$}. That is, we assign each vertex of a finite simple graph $G$ one of at most $\ell$ colors so that adjacent vertices receive distinct colors. Here, $S_{\mathcal{P}}$ is the set of all such colorings, for some fixed integer $\ell$, and the transformation rule is to change the color of exactly one vertex to a different one of the $\ell$ colors while preserving properness. The \emph{$\ell$-coloring reconfiguration graph} of $G$, denoted $\mathcal{R}_\ell(G)$, has the $\ell$-colorings of $G$ as its vertices, with an edge between two colorings whenever they differ on exactly one vertex. We say that $G$ is \emph{recolorable} if $\mathcal{R}_\ell(G)$ is connected for every $\ell>\chi(G)$. The condition $\ell>\chi(G)$ is necessary, since at $\ell=\chi(G)$ the reconfiguration graph can often have isolated vertices; for example, in any proper $\chi(G)$-coloring of $K_{\chi(G)}$, no recoloring step is possible.

There is also a long history concerning the computational complexity of graph recoloring. Although deciding whether a graph is $3$-colorable is $\mathsf{NP}$-complete \cite{GJS76}, deciding whether two given $3$-colorings of a graph $G$ lie in the same component of $\mathcal{R}_3(G)$ can be done in polynomial time \cite{CvHJ11}. In contrast, for every fixed $\ell\geq 4$, deciding whether two given $\ell$-colorings of $G$ lie in the same component of $\mathcal{R}_\ell(G)$ is $\mathsf{PSPACE}$-complete \cite{BC09}. Deciding whether $\mathcal{R}_\ell(G)$ is connected, can behave differently. Even for bipartite graphs $G$, deciding whether $\mathcal{R}_3(G)$ is connected is $\mathsf{coNP}$-complete \cite{CvHJ09}. For more background on the computational complexity of reconfiguration problems, we refer the reader to the aforementioned surveys \cite{ Nishimura2018, vandenHeuvel2013}.

Given a family of graphs $\mathcal{F}$, we say that a graph $G$ is
\emph{$\mathcal{F}$-free} if $G$ does not contain an induced subgraph
isomorphic to any graph in $\mathcal{F}$. When $\mathcal{F}=\{F\}$, we
simply say that $G$ is \emph{$F$-free}. We also write that $G$ is
$(F_1,\ldots,F_t)$-free when $\mathcal{F}=\{F_1,\ldots,F_t\}$. Given two
graphs $F_1$ and $F_2$, we let $F_1+F_2$ denote their disjoint union.
The class of $\mathcal{F}$-free graphs is hereditary, and understanding
which hereditary graph classes are recolorable has been an active theme
in graph recoloring.

The case of a single forbidden induced subgraph is now completely
understood. Belavadi, Cameron, and Merkel proved the following dichotomy.

\begin{theorem}[Belavadi, Cameron, and Merkel~\cite{BCM24}]\label{thm:1forbidden}
Every $F$-free graph $G$ is recolorable if and only if $F$ is an induced
subgraph of $P_4$ or $P_3+P_1$.
\end{theorem}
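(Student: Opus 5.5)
The proof has two directions: showing that forbidding an induced subgraph of $P_4$ or $P_3+P_1$ forces recolorability, and exhibiting non-recolorable $F$-free graphs for every other $F$.

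\textbf{Sufficiency.} It suffices to treat $F=P_4$ and $F=P_3+P_1$, since freeness from an induced subgraph of $F$ implies $F$-freeness.

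For $P_4$-free graphs (cographs) I would induct on $|V(G)|$, using that every cograph on at least two vertices is either a disjoint union or a join of two smaller cographs.
\begin{itemize}
\item \emph{Disjoint union.} Recolor each component separately. Each component has chromatic number at most $\chi(G)<\ell$, so induction applies to it.
\item \emph{Join $G=A\vee B$.} Here $\chi(G)=\chi(A)+\chi(B)$, and $A$ and $B$ receive disjoint color sets. The plan is to move any $\ell$-coloring to a canonical one in which $A$ uses exactly $\chi(A)$ colors.
\begin{itemize}
\item Since $\ell>\chi(A)+\chi(B)$, some color is unused, or we can free one.
\item Using the spare color, recolor $A$ within its own palette plus the spare color; induction applies to $A$ with a palette of size larger than $\chi(A)$.
\item This compresses $A$ onto $\chi(A)$ colors, and then $B$ is handled symmetrically.
\item Finally, permute color classes one at a time through the spare color.
\end{itemize}
\end{itemize}
The main technical point is making the palette-compression step rigorous. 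I would phrase it as a lemma: if $H$ is recolorable and $\ell>\chi(H)$, then every $\ell$-coloring of $H$ can be reached from a $\chi(H)$-coloring. This allows moving colors between the two sides.

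For $(P_3+P_1)$-free graphs I would use the structure of the complement. $G$ is $(P_3+P_1)$-free exactly when $\overline{G}$ is paw-free. By Olariu's theorem, each component of a paw-free graph is triangle-free or complete multipartite. Translated back to $G$: $G$ is a join of pieces, each of which is either the complement of a triangle-free graph (so $3K_1$-free, meaning independent sets have size at most $2$) or a disjoint union of cliques.
\begin{itemize}
\item The join step is handled as in the cograph case.
\item A disjoint union of cliques is directly recolorable.
\item For $3K_1$-free $H$, colorings correspond to partitions of $V(H)$ into at most $\ell$ cliques of size at most $2$ that are non-edges, i.e., matchings in $\overline{H}$. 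For $\ell>\chi(H)$, I would reconfigure any such matching toward a maximum matching using augmenting-path moves. Each move uses one free color to break a pair and re-pair along an alternating path.
\end{itemize}
I expect this matching argument to be the main obstacle.

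\textbf{Necessity.} If $F$ is not an induced subgraph of $P_4$ or $P_3+P_1$, then $F$ either contains a cycle, a claw, $2K_2$, or $3K_1$, or else it is a forest containing one of a few small graphs. For each minimal such $F$ I need an $F$-free graph with a frozen $\ell$-coloring for some $\ell>\chi$, i.e., a coloring in which every vertex sees all other $\ell-1$ colors.
\begin{itemize}
\item \emph{$F$ contains a cycle.} Use bipartite graphs with frozen colorings, such as $K_{n,n}$ minus a perfect matching with vertices $a_i,b_i$ colored $i$. These have chromatic number $2$ and are frozen for $\ell=n\ge 3$.
\item \emph{$F$ is a forest.} Use complements of such graphs, or other cliquish frozen constructions, to avoid $3K_1$ and $2K_2$.
\end{itemize}
Checking that each construction omits $F$ is routine case analysis.
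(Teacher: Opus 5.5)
Your sufficiency direction is sound and follows the standard route behind the cited result. The cograph union/join induction is the Bonamy--Bousquet argument. Olariu's theorem correctly turns $(P_3+P_1)$-free graphs into joins of $3K_1$-free graphs and disjoint unions of cliques. Your join lemma only needs recolorability of the two sides. Your alternating-path argument on matchings of $\overline{H}$ is a workable reproof of Merkel's $3K_1$-free theorem. (The paper itself does not prove this theorem; it only cites it and names these ingredients.) The genuine gap is in the necessity direction.

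There your case split points the wrong way. Since $3K_1$ is an induced subgraph of $P_3+P_1$, the direction you just proved shows that every $3K_1$-free graph is recolorable. So no non-recolorable construction ``avoiding $3K_1$'' can exist. Taking complements of frozen bipartite graphs is also not a usable tool, since complementation preserves neither frozenness nor the relation to $\chi$. The actual minimal obstructions are $K_3$, $C_4$, $C_5$, $K_{1,3}$, $4K_1$, $K_2+2K_1$ and $2K_2$: minimal obstructions have at most five vertices, and $C_5$ is the only five-vertex one. Your list omits $4K_1$ and $K_2+2K_1$.

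Even the cycle case is not handled uniformly, because $K_{n,n}$ minus a perfect matching contains an induced $C_4$ for $n\ge 4$ and an induced $C_6$ for $n\ge 3$. The clean fix is to take $n=3$. The colouring $1,2,3,1,2,3$ of $C_6$ is a frozen $3$-colouring, while $\chi(C_6)=2$. Moreover, the only induced subgraphs of $C_6$ that are not induced subgraphs of $P_4$ or $P_3+P_1$ are $2K_2$, $P_5$ and $C_6$, and each of these contains $2K_2$. So $C_6$ settles every $F$ except those containing an induced $2K_2$.

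For those remaining $F$ you need a $2K_2$-free graph with a frozen colouring using more than $\chi$ colours. That is the Feghali--Merkel construction, which the paper cites. It is a genuinely nontrivial ingredient, and your sketch neither supplies it nor reduces it to routine case analysis.
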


This theorem builds on a number of prior results. For graphs on at most three  vertices, the only negative case is $K_3$: Cereceda, van den Heuvel, and Johnson constructed triangle-free graphs that are not recolorable~\cite{CerecedaHeuvelJohnson08}. On the positive side, Merkel proved that every $3K_1$-free graph is recolorable~\cite{Merkel22}. The $P_2+P_1$ case follows from the recolorability of $P_4$-free graphs, proved by Bonamy and Bousquet~\cite{BB18}. Finally, $P_3$-free graphs are disjoint unions of cliques, and their recolorability follows, for example, from the work of Bonamy, Johnson, Lignos, Patel, and Paulusma~\cite{BJLPP}. For graphs on four vertices, the $P_4$ case is included above, while Feghali and Merkel proved that there exist
$2K_2$-free graphs that are not recolorable~\cite{FM21}.

The next natural step is to forbid two induced subgraphs. Belavadi, Cameron, and Merkel~\cite{BCM24} proved that every $(2K_2,C_4)$-free graph is recolorable. Belavadi and Cameron~\cite{BelavadiCameron2024} then studied this question for pairs $(F_1,F_2)$ where $F_1$ and $F_2$ have at most four vertices. In particular, they showed that every $(2K_2,K_3)$-free graph is recolorable. More generally, they proved recolorability for $(2K_2,F)$-free graphs when $F$ is a triangle, paw, claw, or diamond, and gave negative examples for several other choices of $F$. Combining their results with \cref{thm:1forbidden} and earlier negative examples, the only remaining unresolved pair in this range is $(2K_2,K_4)$.

\begin{question}[\cite{BCH25}]\label{question:MAIN}
Is every $(2K_2,K_4)$-free graph recolorable?
\end{question}

One reason why determining whether $(2K_2,K_4)$-free graphs are recolorable may be challenging is that such graphs may not be $3$-colorable. For example, the wheel $W_5$, obtained by taking a $5$-cycle and adding a universal vertex, is $(2K_2,K_4)$-free and has chromatic number $4$. Another example is $\overline{C_7}$, the complement of a $7$-cycle, which also has chromatic number $4$. In fact, Gaspers and Huang \cite{GH19} showed that every such graph can be $4$-colored.

\begin{theorem}[\cite{GH19}]\label{thm:2K2-K4-4-colorable}
Every $(2K_2,K_4)$-free graph is $4$-colorable.
\end{theorem}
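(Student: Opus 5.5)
The plan is to argue by structure rather than by a general coloring bound, because $(2K_2,K_4)$-free graphs can have unbounded degrees. First I reduce to the case where $G$ is connected and contains no universal vertex. Then I split on whether $G$ contains an induced $C_5$.

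\emph{Reductions.} If $v$ is universal, then $G-v$ is $(2K_2,K_3)$-free, since a triangle in $G-v$ together with $v$ would form a $K_4$. Such graphs are $3$-colorable. One way to see this: a $2K_2$-free graph has at most one nontrivial component, and a $(2K_2,K_3)$-free graph is either bipartite or $C_5$-like, with neighbourhoods independent and non-neighbourhoods of an edge controlled. A cleaner route is to check that $C_5$ is the only odd hole allowed; it is, because $C_{2k+1}$ for $k\ge3$ contains $2K_2$ and $\overline{C_{2k+1}}$ for $k\ge3$ contains a triangle. A triangle-free graph also has no odd antiholes other than $C_5$. So $G-v$ is a $C_5$-blowup-type graph or bipartite, and it is $3$-colorable. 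Hence $G$ is $4$-colorable. We may also assume $G$ is connected, because isolated vertices are harmless.

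\emph{Main case analysis.} Suppose $G$ is $C_5$-free. Every odd hole of length at least $7$ contains an induced $2K_2$, and every odd antihole $\overline{C_{2k+1}}$ with $k\ge 4$ contains a $K_4$. So the only remaining obstruction to perfection is $\overline{C_7}$. If $G$ is also $\overline{C_7}$-free, then $G$ is perfect by the Strong Perfect Graph Theorem, and $\chi(G)=\omega(G)\le 3$. If $G$ contains $\overline{C_7}$, then I would analyse how other vertices attach to it. Using $2K_2$-freeness, each outside vertex must be adjacent to many vertices of the $\overline{C_7}$. Using $K_4$-freeness, its neighbourhood on the $\overline{C_7}$ must be triangle-free. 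Together these force each outside vertex to be a ``clone'' of a vertex of the $\overline{C_7}$ or to fall into a small set of attachment types. I would then extend a fixed $4$-coloring of $\overline{C_7}$ (color classes being pairs of consecutive vertices in the complement ordering) to these clones.

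In the case where $G$ contains an induced $C_5$, $C=c_1\dots c_5$, I partition $V(G)\setminus C$ by neighbourhood on $C$. $2K_2$-freeness forbids vertices with no neighbour on $C$ adjacent to outside vertices, and restricts the neighbour sets to a handful of types. $K_4$-freeness then makes each type class, and the set of vertices complete to $C$, induce triangle-free or even stable graphs. I would show that the vertices complete to $C$ form a stable set $A$, since two adjacent ones would form a $K_4$ with an edge of $C$. With that in hand, I would give an explicit coloring: assign $A$ color $4$ and color the rest by extending the $3$-coloring of $C$ type by type.

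\emph{Expected obstacle.} The hardest step is the attachment analysis in the $C_5$ case. The vertices adjacent to exactly $3$ or $4$ consecutive cycle vertices can be adjacent to one another in complicated ways, and showing that $G-A$ is $3$-colorable will likely need a further case split on whether some such class is nonempty.
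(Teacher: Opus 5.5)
The paper does not prove this theorem. It quotes it from Gaspers and Huang and reuses only their $C_5$-decomposition in \cref{sec:THE-STRUCTURE}.

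Your $C_5$-free branch follows the route the paper itself takes in \cref{subsection:C7-comp-structure}. There, the Strong Perfect Graph Theorem handles the case with no induced $\overline{C_7}$; otherwise the outside vertices fall into the classes $Y_i,F_i$, and an explicit partition into four independent sets is written down. So that half is fine in outline, with two caveats. First, the colouring of the $\overline{C_7}$ is not fixed in advance: the grouping of $5,6,7$ differs between Case~1 and Case~2 according to which $Y_i$ are nonempty. Second, independence of the four classes rests on the relations (\ref{item:5.1})--(\ref{item:5.6}) among the outside classes, which your sketch never addresses.

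The genuine gap is the $C_5$ branch. Its mechanism is to give the set $A$ of vertices complete to $C$ one colour and then $3$-colour $G-A$, and this is false.
\begin{itemize}
\item \emph{Basic counterexample.} Take $\overline{C_7}$ on $1,\dots,7$ with $i\sim j$ iff $j-i\equiv\pm2,\pm3 \pmod 7$, and add a vertex $r$ with $N(r)=\{1,2\}$.
\item \emph{It lies in the class.} No $K_4$ appears, since $N(r)$ is independent. No $2K_2$ appears either: the only vertex outside $N[r]\cup N[1]$ is $7$, and the only one outside $N[r]\cup N[2]$ is $3$.
\item \emph{Your plan fails on it.} The unique induced $C_5$ is $r1372$. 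None of $4,5,6$ is complete to it, because each misses $r$. So $A=\emptyset$ while $\chi(G-A)=4$.
\item \emph{Removing dominated vertices does not help.} Here $r$ is dominated by $5$, but deleting such vertices does not rescue an arbitrary choice of $C$. Add $a,b$ with $N(a)=\{2,3,6,7,r,b\}$ and $N(b)=\{3,4,5,r,a\}$. The result is $(2K_2,K_4)$-free with no comparable vertices, and $r1372$ is still an induced $C_5$ with nothing complete to it.
\end{itemize}
For this $C$, vertex $5$ sees all of $C$ except $r$, and $4,6$ are adjacent $Y$-type vertices. So a correct argument must let colour classes combine $U$ with $F$-, $Y$- and $Z$-vertices, according to which attachment classes are nonempty. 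Compare $I_4=U\cup Y_2\cup Y_5$ in the proof of \cref{theorem:main1}. That case analysis is the substance of the theorem, and your proposal does not supply it.

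Three supporting claims also need correcting.
\begin{itemize}
\item A vertex with no neighbour on $C$ can have neighbours off $C$; a pendant vertex at the hub of $W_5$ is an example. The structure lemma only makes $Z$ independent and anticomplete to the $R_i$.
\item No vertex sees exactly three consecutive vertices of $C$, since together with the opposite edge it induces a $2K_2$. The three-neighbour type is $\{i,i\pm2\}$.
\item In your universal-vertex reduction, $3$-colourability of $(2K_2,K_3)$-free graphs is only asserted. It is true: if a $C_5$ is present, such a graph is a blow-up of $C_5$ plus isolated vertices; otherwise it is bipartite.
\end{itemize}
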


Moreover, the remaining difficulty in determining whether a $(2K_2,K_4)$-free graph $G$ is recolorable lies in the case when $G$ has chromatic number $4$, due to the following theorem of Belavadi, Cameron, and Hildred.

\begin{theorem}[\cite{BCH25}]\label{theo:3color}
Every $3$-colorable $2K_2$-free graph is recolorable.
\end{theorem}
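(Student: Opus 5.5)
The plan is a reduction to a fixed $3$-coloring followed by induction on the number of color classes. Fix a $3$-colorable $2K_2$-free graph $G$, an integer $\ell\ge \chi(G)+1$ (so $\ell\ge 4$ when $\chi(G)=3$), and a proper $3$-coloring $\gamma$ of $G$ with classes $C_1,C_2,C_3$. It suffices to show that every $\ell$-coloring $\alpha$ can be recolored to one fixed target coloring, since the reconfiguration graph is undirected. Permuting color names on a proper coloring can be simulated by single-vertex recolorings as soon as some color is unused, and $\ell>3$ guarantees that a coloring which is a renaming of $\gamma$ leaves at least one color free. So the target will be any coloring whose classes are exactly $C_1,C_2,C_3$.

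The structural tool is that for any two disjoint independent sets $A,B$, the bipartite graph $G[A\cup B]$ is $2K_2$-free. Hence the neighborhoods in $B$ of the vertices of $A$ form a chain under inclusion, and all edges lie in a single component. Consequently, in any coloring $\alpha$ and for any color $c$, the vertices of $C_1$ that see color $c$ have nested $c$-neighborhoods. I will use this to make $C_1$ monochromatic. Choose a color $c$ and process the vertices of $C_1$ in decreasing order of $|N(v)\cap \alpha^{-1}(c)|$. Before moving $v$ to $c$, I recolor its $c$-colored neighbors, which lie in $C_2\cup C_3$, to other colors. Such a neighbor $u$ needs a color missing from $N(u)$. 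Because $\ell\ge4$, I will try to ensure that a color $d\neq c$ is absent from a suitable vertex set: first empty one color class entirely, arguing via the chain structure that the vertices of a smallest-degree color class can be pushed out. Then I use $d$ as a buffer color that vertices of $C_2\cup C_3$ can temporarily take.

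Once $C_1$ is monochromatic in color $c$, I remove $C_1$ and the color $c$. What remains is an $(\ell-1)$-coloring problem on $H=G-C_1$, where $\ell-1\ge3>\chi(H)$ and $H$ is a bipartite $2K_2$-free graph, hence a chain graph plus isolated vertices. Recolorability of such graphs with at least $3$ colors is easy. I can repeat the same argument with classes $C_2,C_3$, or cite that $P_4$-free-like arguments apply; in fact the same chain-ordering argument works with two classes. This brings $H$ to a coloring with classes $C_2,C_3$, all while never using $c$, so the coloring of $C_1$ stays proper throughout. Finally, I rename colors to reach the target.

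The main obstacle is the first step: forcing $C_1$ into a single color without getting stuck. A neighbor $u\in C_2\cup C_3$ of $v$ may itself see every color other than $c$. I would first try to handle this by showing, via $2K_2$-freeness, that such ``blocked'' vertices are dominated in the chain order by a vertex already processed. If that fails, I would instead choose $c$ to be the color already used on the largest part of $C_1$, and argue by a minimal counterexample on the number of vertices of $C_1$ not colored $c$.
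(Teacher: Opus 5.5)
The paper does not prove this statement; it is quoted from \cite{BCH25}. Your argument therefore has to stand on its own, and it has a genuine gap exactly where you place the ``main obstacle''.

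The outer framework is sound and is the same template the paper uses: make one class of a fixed $3$-coloring monochromatic, peel it off via Lemma~\ref{claim:CoverI}, finish the bipartite remainder, and rename. Two small repairs are needed there.
\begin{itemize}
\item Once $C_1$ is monochromatic in $c$, vertices of $G-C_1$ may still carry $c$. So you must delete all vertices currently colored $c$, not just $C_1$, and restore the extra ones at the end, as in the proof of Lemma~\ref{claim:CoverI}.
\item The bipartite step is indeed easy. In the remaining chain graph, a maximum-degree vertex $w$ on the $C_2$ side is adjacent to every non-isolated vertex on the $C_3$ side, so that whole side can be recolored to $w$'s color at once.
\end{itemize}

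The step that carries essentially all of the difficulty is making $C_1$ monochromatic from an arbitrary $\ell$-coloring, and it is not proved. Your preliminary ``first empty one color class entirely'' is not really a preliminary. If some color $d$ were absent from $G$, then $C_1\to d$ would be legal immediately because $C_1$ is independent, with no chain ordering or buffer needed. So that sentence presupposes what must be shown. The offered justification, that ``a smallest-degree color class can be pushed out'', has no support. With $\ell=\chi(G)+1$ a vertex may see all $\ell-1$ other colors, and whole colorings can be frozen this way; for example, the $3$-coloring of $C_6$ that gives antipodal vertices equal colors. Ruling this out for $3$-colorable $2K_2$-free graphs is precisely where an argument is required.

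The nested-neighborhood observation does not supply that argument. It only describes how $C_1$ sees one current color class. In effect it just singles out a top vertex $v_1$, since after clearing $v_1$'s $c$-neighbors every vertex of $C_1$ can take $c$.

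The real obstruction lies elsewhere. A $c$-colored neighbor $u\in C_2\cup C_3$ of $v_1$ is blocked by its own neighbors in the other color classes, and clearing those can cascade. The order on $C_1$ says nothing about $u$'s neighborhood, so ``blocked vertices are dominated by an already processed vertex'' has no content. The minimal-counterexample fallback likewise gives no step that decreases the count.

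What is missing is a device that controls which colors can occur on entire classes, independently of the current coloring $\varphi$. The one this paper uses in its $4$-chromatic cases is Lemma~\ref{lem: universal to independent}. Choose the reference coloring so that $C_1$ is maximal independent in $G$ and $C_2$ is maximal independent in $G-C_1$. Then:
\begin{itemize}
\item there are $x_2,x_3\in C_1$ complete to $C_2$ and $C_3$ respectively, and $y\in C_2$ complete to $C_3$;
\item hence in every coloring, $\varphi(x_2)$ is absent from $C_2$, and $\varphi(x_3)$ and $\varphi(y)$ are absent from $C_3$.
\end{itemize}
If $\varphi(x_2)=\varphi(x_3)$, then $C_1\to\varphi(x_2)$ is immediately legal. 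The remaining configurations of $(\varphi(x_2),\varphi(x_3),\varphi(y))$ need an explicit case analysis using the spare color, in the style of the claims in the proof of Theorem~\ref{theorem:main1}.

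As written, your proposal supplies the reduction but not this core step.
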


This paper provides evidence toward an affirmative answer to Question \ref{question:MAIN} by forbidding additional induced subgraphs. In light of the examples above, a natural first candidate is to forbid $W_5$. Even determining whether $(2K_2,K_4,W_5)$-free graphs are recolorable seems challenging; however, our first main result resolves a stronger restriction by forbidding induced $5$-cycles altogether.

\begin{theorem}\label{theorem:main3}
Every $(2K_2,K_4,C_5)$-free graph is recolorable.
\end{theorem}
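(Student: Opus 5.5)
By \cref{theo:3color}, every $3$-colorable $2K_2$-free graph is recolorable. By \cref{thm:2K2-K4-4-colorable}, every $(2K_2,K_4)$-free graph is $4$-colorable. So the plan is to show that every $(2K_2,K_4,C_5)$-free graph $G$ is $3$-colorable, i.e.\ $\chi(G)\le 3$. The theorem then follows immediately from \cref{theo:3color}.

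To bound $\chi(G)$, I will use the structure forced by forbidding $2K_2$ and $C_5$. A $2K_2$-free graph contains no induced $C_k$ for $k\ge 6$, since $C_6$ and longer cycles contain an induced $2K_2$. Forbidding $C_5$ as well (and $C_4$ is not forbidden), $G$ has no odd hole at all. Any induced $\overline{C_{2k+1}}$ with $2k+1\ge 7$ contains $\overline{C_7}$-type structure. I also need to rule out odd antiholes: $\overline{C_5}=C_5$ is excluded directly, while for $2k+1\ge 7$ the antihole $\overline{C_{2k+1}}$ contains a $K_4$ when $2k+1\ge 8$ (its clique number is $k\ge 4$), and $\overline{C_7}$ has clique number $3$. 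So $\overline{C_7}$ is the one remaining obstruction to perfection. If $G$ were $\overline{C_7}$-free, the Strong Perfect Graph Theorem would give $\chi(G)=\omega(G)\le 3$.

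The main obstacle is therefore $\overline{C_7}$, which is $(2K_2,K_4,C_5)$-free? I need to check this first. $\overline{C_7}$ contains an induced $C_5$: choosing vertices $0,1,3,4,6$ of $C_7$ gives an induced subgraph of $C_7$ that is the path $3\!-\!4$, $6\!-\!0\!-\!1$, which is $P_2+P_3$, whose complement is the house rather than $C_5$. So this particular choice fails, and I would check every $5$-subset of $C_7$ for an induced subgraph whose complement is $C_5$. The complement of $C_5$ is $C_5$, so I need an induced $C_5$ in $C_7$, and there is none. Hence $\overline{C_7}$ is $C_5$-free, and the SPGT route alone is insufficient.

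The real work is to handle the case where $G$ contains an induced $\overline{C_7}$, and here I expect the following. Let $A$ induce $\overline{C_7}$. Using $2K_2$-freeness, every other vertex $v$ must have many neighbours in $A$; using $K_4$-freeness and $C_5$-freeness, the possible neighbourhoods $N(v)\cap A$ are severely restricted. Forbidding $C_5$ may in fact force that no vertex outside $A$ exists, or that such vertices are clones, so that $G$ is a blow-up of $\overline{C_7}$ by independent sets. I would analyse these neighbourhood types and their pairwise adjacencies. If $G$ still has $\chi=4$, as $\overline{C_7}$ itself does, then I must prove recolorability directly for this structured family.

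In that case the direct argument runs as follows. Given two $\ell$-colorings with $\ell\ge 5$, I recolor through a canonical coloring in which each clone class of an independent set receives a single color, and then use the extra colors to permute the class colors. This is where I expect most of the effort to lie.
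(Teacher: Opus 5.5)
Your reduction to the case where $G$ contains an induced $\overline{C_7}$ is correct and is how the paper begins. Otherwise $G$ has no odd hole and no odd antihole, so the Strong Perfect Graph Theorem gives $\chi=\omega\le 3$ and \cref{theo:3color} applies. You also correctly note that $\overline{C_7}$ itself defeats the plan $\chi(G)\le 3$.

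The gap is that everything after this point is conjecture, and the structural conjecture is false. By Lemma~\ref{lem: comparable} you may assume $G$ has no comparable vertices. Clones are comparable, so a blow-up of $\overline{C_7}$ by independent sets would then collapse to $\overline{C_7}$ itself, but much more survives. Here is an example. Label $\overline{C_7}$ by $1,\dots,7$ with $i\sim j$ iff $j-i\not\equiv\pm1 \pmod 7$. Add vertices $a,b,c,d$ whose neighbourhoods on the antihole are $\{7,1,2\}$, $\{5,6,7\}$, $\{1,2,3,4\}$ and $\{3,4,5,6\}$ respectively, together with the edges $ab,ac,ad,bc,bd$. The resulting $11$-vertex graph is $(2K_2,K_4,C_5)$-free and has no comparable vertices. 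In general (Lemma~\ref{claim:Thm3-all-of-G}), every vertex off $C$ lies in some $Y_i$, with trace $\{i-1,i,i+1\}$ on $C$ (above, $a\in Y_1$ and $b\in Y_6$). Otherwise it lies in some $F_i$, with trace $\{i-1,\dots,i+2\}$ (above, $c\in F_2$ and $d\in F_4$). Adjacencies such as $F_i$ versus $F_{i\pm2}$, or $Y_i$ versus $F_{i-2}\cup F_{i+1}$, are only constrained by the indices, not determined by them.

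Your recoloring paragraph also omits the actual mechanism. Recoloring ``through a canonical coloring'' presupposes that an arbitrary $5$-coloring can be driven to a canonical one, and that is the entire difficulty. The Renaming Lemma only supplies the final permutation of colors. The paper's engine is Lemma~\ref{claim:CoverI}, used as follows:
\begin{itemize}
\item Fix a partition $I_1,\dots,I_4$ of $V(G)$ into independent sets adapted to the $Y_i/F_i$ structure. After normalizing $Y_1\neq\emptyset$, one partition is used when $Y_6=\emptyset$ and another when $Y_6\neq\emptyset$.
\item Show that from any $5$-coloring some single $I_j$ can be made monochromatic. Five colors suffice by Lemma~\ref{lem:only5}.
\item Deleting that color class leaves a $3$-colorable $2K_2$-free graph, which is recolorable by \cref{theo:3color}.
\end{itemize}
Making an $I_j$ monochromatic is not automatic. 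It needs the adjacency facts (\ref{item:5.1})--(\ref{item:5.6}) and the $K_4$-exclusion Lemmas~\ref{claim:thm3-star}, \ref{claim:thm3-2star} and~\ref{claim:thm3-3star}. These lemmas let one split sets such as $Y_7$ into $Y_7'$ (vertices with a neighbour in $F_5$) and $Y_7''$, and recolor the pieces separately. The paper then gives an explicit recoloring sequence for each of the fourteen normalized colorings of $C$, in both cases. None of this is present or anticipated in your proposal, so as written it proves the theorem only for $\overline{C_7}$-free graphs.
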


We note that such graphs are not necessarily $3$-colorable, as shown by $\overline{C_7}$. Moreover, neither the $2K_2$-free assumption nor the $K_4$-free assumption can be dropped. Indeed, Feghali and Merkel constructed non-recolorable
$(2K_2,C_5)$-free graphs~\cite{FM21}, while $K_{3,3}$ with a perfect
matching removed, equivalently $C_6$, is a non-recolorable
$(K_4,C_5)$-free graph; see also~\cite{BCH25}.

It is then natural to assume the existence of an induced $C_5$ and ask which additional forbidden structures allow us to show that a $(2K_2,K_4)$-free graph is recolorable. Here, we systematically consider the possible graphs obtained by adding a single vertex to a $C_5$. Requiring the resulting graph to remain $2K_2$-free, then this new vertex can have degree $2$, $3$, $4$, or $5$ on the cycle, and in each case there is a unique such configuration. We denote these graphs by $H_2,H_3,H_4$, and $W_5$, respectively; they are shown in Figure \ref{fig:wheel-subgraphs}. Our next result shows that, among the three proper subgraphs $H_2,H_3,H_4$ of the wheel, forbidding any two is enough to guarantee recolorability.

\begin{theorem} \label{theorem:main2}
Let $H_a,H_b$ be distinct graphs in $\{H_2,H_3,H_4\}$. Then every $(2K_2,K_4,H_a,H_b)$-free graph is recolorable.
\end{theorem}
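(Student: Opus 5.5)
We reduce to the case $\chi(G)=4$ and then study the structure around an induced $C_5$. By \cref{thm:2K2-K4-4-colorable}, $\chi(G)\le 4$. If $\chi(G)\le 3$, the graph is recolorable by \cref{theo:3color}. If $G$ has no induced $C_5$, we are done by \cref{theorem:main3}. So we may assume $\chi(G)=4$ and that $G$ contains an induced $5$-cycle $C=v_1v_2v_3v_4v_5$. Since $\chi(G)=4$ and the wheel is a proper subgraph in each case, the plan is the same throughout: describe the neighbourhood structure of $C$ under the two forbidden graphs $H_a,H_b$, and then give an explicit recoloring argument for $\ell\ge 5$.

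\textbf{Step 1: partition by neighbourhood on $C$.} Every vertex $x\notin C$ has a neighbourhood on $C$ that keeps $C+x$ free of $2K_2$ and of $K_4$. We check which patterns survive. A vertex with no neighbour on $C$ creates a $2K_2$ with some edge of $C$, so it cannot occur. A single neighbour also gives a $2K_2$. Two non-consecutive neighbours give $H_2$, three neighbours give $H_3$, four give $H_4$, and five give $W_5$. Two consecutive neighbours also give a $2K_2$. So every outside vertex is of type $2,3,4$ or $5$, and the two forbidden types are absent. This leaves three cases: only types $\{4,5\}$, only $\{3,5\}$, or only $\{2,5\}$.

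\textbf{Step 2: describe the classes.} Split the remaining vertices into classes $A_i$ according to their attachment position relative to $C$. Then use $2K_2$-freeness and $K_4$-freeness to show the following:
\begin{enumerate}[label=(\alph*)]
\item each class is a stable set or a very restricted graph;
\item the set $W$ of universal (type-$5$) vertices is stable, since two adjacent ones together with an edge of $C$ give a $K_4$;
\item the adjacencies between classes are forced, typically complete or anticomplete.
\end{enumerate}
In the case with only types $\{2,5\}$, for instance, a type-$2$ vertex adjacent to $v_{i-1},v_{i+1}$ behaves like a clone of $v_i$. We aim to show that $G$ is then a blow-up of a small graph: $C$ or $\overline{C_7}$ with each vertex replaced by a stable set, plus the stable set $W$ joined to everything else.

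\textbf{Step 3: recolor.} Given the structure, we recolor with $\ell\ge5$ colors by first recoloring each blown-up stable set to a single color, one vertex at a time. A target coloring $\beta$ of a fixed canonical form can then be reached from any $\alpha$. The standard tool is this: if $G-W$ is $3$-colorable and recolorable with $\ell-1$ colors, and $W$ is a stable set complete to $G-W$, we first make $W$ monochromatic using a free colour, then recolor $G-W$ avoiding that colour. Recolorability of $G-W$ follows from \cref{theo:3color} whenever $G-W$ is $3$-colorable, which holds for blown-up odd cycles. I expect the main obstacle to be Step 2 in the $\{4,5\}$ case. There, type-$4$ vertices interact with each other in complicated ways, since two type-$4$ vertices may be adjacent, and $\overline{C_7}$-like pieces can appear. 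So $G-W$ need not be $3$-colorable, and we must handle $\chi(G-W)=4$ through a direct Kempe-style argument, possibly reducing to \cref{theorem:main3} on $G-W$ when it is $C_5$-free.
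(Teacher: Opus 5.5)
Your proposal has genuine gaps: Step 1 rests on a false claim, and the structural target of Steps 2--3 fails exactly in the hard case.

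\textbf{Step 1 and the missing reduction.} Your claim that a vertex with no neighbour on $C$ creates a $2K_2$ is false. Such a vertex together with an edge of $C$ induces only $K_2+K_1$. For example, take $C_5$, add a vertex $f$ adjacent to $2,3,4,5$, and add a pendant vertex at $f$; this graph is connected and $(2K_2,K_4,H_2,H_3)$-free. The paper removes such vertices only after first reducing, via Lemma~\ref{lem: comparable}, to graphs with no comparable vertices. The same reduction bounds the sizes of $U$, $F_i$, $Y_i$. It never appears in your plan, and it is what collapses every case to a short finite list. With it, the case you flag as the main obstacle, $\{a,b\}=\{2,3\}$, is the easiest:
\begin{itemize}
\item adjacent $f\in F_i$, $f'\in F_{i+1}$ would form a $K_4$ with $i+2,i+3$, so no two type-$4$ vertices are adjacent;
\item $F_i$ and $F_{i+2}$ are never both nonempty, so the type-$4$ and type-$5$ vertices together form a stable set;
\item $G$ is therefore $C_5$, $W_5$, or $C_5$ plus one vertex in each of two consecutive $F_i$, which is $3$-degenerate.
\end{itemize}
No $\overline{C_7}$-pieces or Kempe arguments arise.

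\textbf{Steps 2--3 in the case $\{a,b\}=\{2,4\}$.} This is where the real difficulty lies, and your structure is wrong there. By (\ref{item:1.2}), type-$5$ vertices are anticomplete to type-$3$ and type-$4$ vertices. So $W$ is not joined to $G-W$, and your ``free colour on $W$'' reduction is unavailable. The $W_5$-containing graphs in this case fall under \cref{theorem:main1}, which the paper proves by a long explicit case analysis; your plan offers no substitute.

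Even without a $W_5$, this case contains $4$-chromatic graphs that are not blow-ups of $C_5$ or $\overline{C_7}$ plus a joined stable set. Take $S_2$:
\begin{itemize}
\item it is $5$-regular on $10$ vertices and has no twins;
\item its only induced $5$-cycles are $12345$ and $y_1y_2y_3y_4y_5$, and each vertex off one of these cycles has exactly three neighbours on it, so $S_2$ is $(2K_2,K_4,H_2,H_4,W_5)$-free;
\item it is not $3$-colourable: a $3$-colouring of $C$ is unique up to rotation and renaming colours, it then forces $y_1,\dots,y_4$, and no colour is left for $y_5$.
\end{itemize}
After removing comparable vertices, the paper shows that the $W_5$-free graphs in this case are exactly $C_5$, $S_2$ and $S_3$. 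It then recolours $S_2$ and $S_3$ by separate ad hoc arguments (Lemmas~\ref{lemma:S2-recolorable} and~\ref{lemma:S3-recolorable}). Nothing in your outline would handle these graphs.
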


\begin{figure}[h!]
    \centering
    \begin{tikzpicture}[
    vtx/.style={circle, fill=black, inner sep=1.8pt},
    edge/.style={line width=1pt},
    graphlabel/.style={font=\large}
]

\def\r{1.15}
\def\sep{3.4}

\foreach \x/\name in {0/C_5,1/H_2,2/H_3,3/H_4,4/W_5} {

    \coordinate (c\x) at ({\x*\sep},0);

    \coordinate (v\x-0) at ($(c\x)+(90:\r)$);
    \coordinate (v\x-1) at ($(c\x)+(18:\r)$);
    \coordinate (v\x-2) at ($(c\x)+(-54:\r)$);
    \coordinate (v\x-3) at ($(c\x)+(-126:\r)$);
    \coordinate (v\x-4) at ($(c\x)+(162:\r)$);

    \draw[edge] (v\x-0) -- (v\x-1) -- (v\x-2) -- (v\x-3) -- (v\x-4) -- cycle;

    \foreach \i in {0,1,2,3,4} {
        \node[vtx] at (v\x-\i) {};
    }

    \node[graphlabel] at ($(c\x)+(0,-1.75)$) {$\name$};
}

\foreach \x in {1,2,3,4} {
    \node[vtx] (u\x) at (c\x) {};
}

\draw[edge] (u1) -- (v1-4);
\draw[edge] (u1) -- (v1-1);

\draw[edge] (u2) -- (v2-0);
\draw[edge] (u2) -- (v2-2);
\draw[edge] (u2) -- (v2-3);

\draw[edge] (u3) -- (v3-1);
\draw[edge] (u3) -- (v3-2);
\draw[edge] (u3) -- (v3-3);
\draw[edge] (u3) -- (v3-4);

\foreach \i in {0,1,2,3,4} {
    \draw[edge] (u4) -- (v4-\i);
}

\end{tikzpicture}
    \caption{All $2K_2$-free graphs $H$ such that $C_5$ is an induced subgraph of $H$ and $H$ is an induced subgraph of $W_5$.}
    \label{fig:wheel-subgraphs}
\end{figure}

Our final result is to show that if we only forbid $H_4$, the existence of a $W_5$ is strong enough to force recolorability.

\begin{theorem}\label{theorem:main1}
Every $(2K_2,K_4,H_4)$-free graph that contains a $W_5$ is recolorable.
\end{theorem}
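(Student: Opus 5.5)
Fix an induced $W_5$ in $G$: a $5$-cycle $C=c_0c_1c_2c_3c_4$ together with a hub $w$ adjacent to every $c_i$. The plan is to pin down the structure of $G$ relative to this wheel well enough to show that $G$ is obtained from a small core by operations that preserve recolorability. The structural lemmas come first, and the recoloring argument is carried out afterwards.

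\textbf{Structural analysis.} Let $x$ be any vertex outside the wheel. Since $G$ is $2K_2$-free, $N(x)\cap C$ is either empty, two vertices at distance two on $C$, three vertices (which must be consecutive, or a pattern that still avoids $2K_2$), four vertices, or all five.

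\begin{itemize}
\item A vertex with four neighbors on $C$ induces an $H_4$ together with $C$, so this case is excluded.
\item A vertex with all five neighbors on $C$ cannot be adjacent to $w$, since otherwise $\{x,w,c_i,c_{i+1}\}$ is a $K_4$. Such vertices therefore behave like clones of $w$ on $C$.
\item For the sets $A_2$, $A_3$ and $A_0$ (vertices with two, three, or no neighbors on $C$), I will determine their adjacency to $w$ using $K_4$-freeness and $2K_2$-freeness. For example, if $x\in A_2$ with $N(x)\cap C=\{c_{i-1},c_{i+1}\}$, then $x$ must see $w$ or else $x c_{i+1}$ together with $w c_{i+3}$ forms a $2K_2$ (this needs checking). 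Then $\{x,c_i,\dots\}$ also has to be checked for $H_4$-like configurations with $w$ in the role of the cycle.
\item The key leverage is that $w$ together with any $5$-cycle in $N(w)$ forms a $W_5$. So $N(w)$ is triangle-free ($K_4$-freeness) and $2K_2$-free, which makes it a very restricted graph: a $(2K_2,K_3)$-free graph, i.e.\ bipartite $2K_2$-free or containing $C_5$ blow-up structure.
\item Forbidding $H_4$ applied to every $W_5$ centered at $w$ should force every vertex outside $N[w]$ to have a controlled neighborhood in $N(w)$.
\end{itemize}

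The goal of this analysis is to show that $G$ consists of $N[w]$, an independent set of ``hub clones'', and vertices whose neighborhoods are nested or comparable. Nested neighborhoods are exactly what make dominated-vertex reductions available.

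\textbf{Recoloring.} Here I will use the standard tool: if $N(u)\subseteq N(v)$ for nonadjacent $u,v$, then $G$ is recolorable whenever $G-u$ is. This lets me reduce to a core $G'$, such as a blow-up of $W_5$ by independent sets (or $\overline{C_7}$-like pieces), whose $\ell$-colorings for $\ell\ge 5$ are handled directly. By Theorem~\ref{thm:2K2-K4-4-colorable} we have $\chi(G)=4$, so only $\ell\ge 5$ matters. In $\mathcal{R}_\ell$ I will connect every coloring to a canonical one in three steps:
\begin{enumerate}
\item Recolor the hub class (which is independent) to a single color not used on $C$. This is possible since $\ell\ge5$ and $C$ needs at most three colors once the hubs have been recolored via a free color.
\item Use Theorem~\ref{theo:3color} on $G-\{\text{hub class}\}$, which I expect to be $3$-colorable because $N(w)$ is triangle-free and the remaining structure is forced. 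This reconfigures the rest with $\ell-1\ge4>3$ colors while the hub color stays fixed.
\end{enumerate}

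\textbf{Main obstacle.} The crux is proving that deleting the set $S$ of vertices complete to $C$ (together with $w$) leaves a $3$-colorable graph, or that $S$ is an independent set whose removal leaves a $3$-colorable graph. If this holds, the argument above works cleanly: recolor $S$ to a common fresh color (possible since $S$ is independent and the rest uses at most $\ell-1$ colors after a preliminary step), then apply Theorem~\ref{theo:3color} with $\ell-1$ colors. I would first try to show that every vertex adjacent to all of $C$ is nonadjacent to every other such vertex and that $G-S$ is $W_5$-free and $\overline{C_7}$-free. The tricky part is excluding $\overline{C_7}$-type obstructions in $G-S$ using the fixed wheel and $H_4$-freeness, which likely requires a careful case analysis of vertices at distance two from $w$.
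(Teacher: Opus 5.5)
\textbf{There is a genuine gap, and it is exactly at the step you call the crux.} It is false that deleting the vertices complete to $C$ leaves a $3$-colorable graph. Counterexample: take $C=c_0c_1c_2c_3c_4$ with hub $w$, and add $y_0,\dots,y_4$, where $y_i$ is adjacent to $c_i,c_{i+2},c_{i-2}$ and $y_iy_{i+1}\in E(G)$ (indices mod $5$). This is the graph $S_2$ of Section~\ref{sec:SECOND-THEOREM} with a hub added to its inner cycle. A direct check shows the following.
\begin{enumerate}
\item It is $K_4$-free: every neighbourhood is $C_5$, $C_5+K_1$ or $P_4+K_1$, hence triangle-free.
\item It is $2K_2$-free: for every edge, the vertices missed by both ends form an independent set.
\item It is $H_4$-free: no non-neighbour of a vertex closes an induced $P_4$ of that vertex's neighbourhood into an induced $C_5$.
\item It contains a $W_5$ and has no comparable vertices, so the dominated-vertex reduction does not touch it.
\end{enumerate}
The only vertex complete to $C$ is $w$, yet $G-w=S_2$ is not $3$-colorable. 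In any $3$-coloring of $C$, some $c_j$ carries a color used nowhere else on $C$, and then $y_j$ sees three distinct colors on $c_j,c_{j+2},c_{j-2}$. Choosing a different $W_5$ does not help: the other hubs are the $c_i$, and, for example, in $G-c_0$ the path $c_1c_2c_3c_4$ must alternate two colors under $w$, which forces the adjacent vertices $y_0,y_1$ both onto $w$'s color. Note also that once comparable vertices are removed, the set of vertices complete to $C$ is just $\{w\}$ by (\ref{item:4.2}). So your first recoloring step is vacuous, and your whole argument collapses to ``$G-w$ is $3$-colorable''. The example shows the theorem is not that cheap.

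\textbf{The missing idea} is that the hub must share its color class with other vertices, and forcing some class to become monochromatic then takes real work. The paper proceeds as follows (in its notation $C=12345$, hub $u$).
\begin{enumerate}
\item It reduces to the case of no comparable vertices. This needs care, because ``contains a $W_5$'' is not hereditary: one must check that deleting a dominated vertex keeps an induced $W_5$.
\item It then has $V(G)=V(C)\cup\{u\}\cup Z\cup\bigcup_i Y_i$, where $u$ is complete to $Z$ and anticomplete to the $Y_i$, and each $Y_i$ is complete to $Y_{i\pm1}$ and anticomplete to $Y_{i\pm2}$.
\item After handling $W_5$ itself and $S_1$, it assumes $Y_2,Y_5\neq\emptyset$ and uses the partition $I_1=Z\cup\{1,3\}$, $I_2=Y_3\cup\{2,4\}$, $I_3=Y_1\cup Y_4\cup\{5\}$, $I_4=\{u\}\cup Y_2\cup Y_5$.
\item The vertices $u$, $x_2\in Y_2$ and $x_3\in Y_5$ are complete to $I_1,I_2,I_3$ respectively. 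A case analysis on the colors of $(u,x_2,x_3)$ makes some $I_j$ monochromatic, and Lemma~\ref{claim:CoverI} with Theorem~\ref{theo:3color} then finishes.
\end{enumerate}

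Your structural bullets also need repair:
\begin{enumerate}
\item Three neighbours on $C$ are never consecutive, since that yields a $2K_2$. The only possible pattern is $\{c_i,c_{i+2},c_{i-2}\}$.
\item For $N(x)\cap C=\{c_{i-1},c_{i+1}\}$, your proposed $2K_2$ formed by $xc_{i+1}$ and $wc_{i+3}$ is not induced, because $wc_{i+1}\in E(G)$. The correct argument uses $H_4$: if $xw\notin E(G)$, then $w$ has four neighbours on the $5$-cycle obtained by replacing $c_i$ with $x$.
\end{enumerate}
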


After determining that a graph $G$ is recolorable, i.e. $\mathcal{R}_{\ell}(G)$ is connected for all $\ell>\chi(G)$, often the next question to investigate is the diameter of $\mathcal{R}_{\ell}(G)$. In words, this is the maximum number of transformation rules (recoloring steps) required to get from any $\ell$-coloring to any other $\ell$-coloring. In many of the results we have cited, the authors have determined an upper bound for this diameter. In this paper, the proofs of \cref{theorem:main3}, \cref{theorem:main2}, and \cref{theorem:main1} are algorithmic in the sense that they provide the recoloring steps to get from any one $\ell$-coloring to another. Optimizing these recoloring sequences was not a goal of ours, however, and hence we do not include any bounds on the diameter, although such bounds do exist.

The rest of the paper is organized as follows. In \cref{sec:RecoloringPrelims}, we present and prove some preliminary recoloring results that will be used in the proofs of our three main theorems. We also give a short proof outline for the template that all three of our proofs follow. In \cref{sec:THE-STRUCTURE}, we provide structural results on $(2K_2,K_4)$-free graphs containing a $C_5$, as well as additional structure when more subgraphs are forbidden, that will be useful for our main theorems. Then, in \cref{sec:FIRST-THEOREM}, we prove \cref{theorem:main1}, and in \cref{sec:SECOND-THEOREM}, we prove \cref{theorem:main2}. Finally, \cref{sec:THIRD-THEOREM} is dedicated to the proof of \cref{theorem:main3}. It contains two subsections: Subsection \ref{subsection:C7-comp-structure} provides the structural decomposition of $(2K_2,K_4,C_5)$-free graphs, and Subsection \ref{subsec:PROOF-OF-THIRD-THEOREM} presents the proof of \cref{theorem:main3}.

\section{Recoloring preliminaries and a proof outline}\label{sec:RecoloringPrelims}

In this section, we collect a toolkit of recoloring results that will be used in the proofs of our three main theorems. 

First, we will use the following degeneracy bound, proved independently by Dyer, Flaxman, Frieze, and Vigoda and by Cereceda, van den Heuvel, and Johnson. A graph $G$ is said to be $d$-degenerate if every nonempty subgraph of $G$ has a vertex of degree at most $d$. Equivalently, the vertices of $G$ can be ordered so that each vertex has at most $d$ neighbors appearing after it in the ordering.

\begin{theorem}[\cite{CvHJ09,DFFV06}]\label{thm:degeneracy}
If $G$ is $d$-degenerate, then $\mathcal{R}_{\ell}(G)$ is connected for every $\ell\geq d+2$.
\end{theorem}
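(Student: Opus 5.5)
The plan is to prove the statement by induction on the number of vertices, using the degenerate ordering to reduce to a smaller graph. Fix a $d$-degenerate graph $G$ and $\ell\ge d+2$. Choose a vertex $v$ of degree at most $d$ in $G$; such a vertex exists by definition. The graph $G-v$ is again $d$-degenerate, since its subgraphs are subgraphs of $G$. By induction, $\mathcal{R}_\ell(G-v)$ is connected. The base case is the empty graph, or a single vertex, where connectivity is trivial.

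Now let $\alpha$ and $\beta$ be two $\ell$-colorings of $G$. Their restrictions $\alpha|_{G-v}$ and $\beta|_{G-v}$ are joined by a sequence of single-vertex recolorings $\gamma_0=\alpha|_{G-v},\gamma_1,\dots,\gamma_m=\beta|_{G-v}$ in $\mathcal{R}_\ell(G-v)$. I lift this sequence to $G$, keeping $v$'s color fixed whenever possible. Suppose the step $\gamma_{i}\to\gamma_{i+1}$ recolors a vertex $u$ to a color $c$.

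\begin{itemize}
\item If $u$ is not adjacent to $v$, or $c$ differs from the current color of $v$, perform the step directly in $G$.
\item Otherwise, first recolor $v$ to a color not used on $N(v)$ in either $\gamma_i$ or $\gamma_{i+1}$ and different from $c$, then perform the step.
\end{itemize}

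The main point to check is that such a color exists. The neighbors of $v$ in $\gamma_i$ use at most $d$ colors. The only neighbor whose color changes is $u$, which receives the new color $c$. So the forbidden set is contained in the colors of $N(v)$ under $\gamma_i$ together with $c$, which has size at most $d+1$. Since $\ell\ge d+2$, a valid color remains. This is the crux, and it is exactly why the bound is $d+2$ rather than $d+1$.

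After the lifted sequence ends, every vertex other than $v$ agrees with $\beta$. The color of $v$ is proper, and $\beta(v)$ is also available, because $\beta$ is proper. We finish by recoloring $v$ to $\beta(v)$ if needed. This joins $\alpha$ and $\beta$ in $\mathcal{R}_\ell(G)$.

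The only delicate step is the counting in the bad case, where $u\in N(v)$ and $c$ equals $v$'s current color; there the argument above settles it. Bounds on the diameter are not needed for this statement.
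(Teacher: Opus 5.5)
The paper gives no proof of this statement: it quotes it from Cereceda--van den Heuvel--Johnson and Dyer--Flaxman--Frieze--Vigoda and uses it only as a black box. Your argument is correct, and it is essentially the standard inductive proof from those sources. You delete a vertex $v$ of degree at most $d$ and lift a recoloring sequence of $G-v$. Whenever a neighbor is about to take $v$'s color, you first move $v$ out of the way. This is possible because at most $|\gamma_i(N(v))\cup\{c\}|\le d+1<\ell$ colors are forbidden for $v$.
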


The next tool is a reduction from showing that $\mathcal{R}_\ell(G)$ is connected for every $\ell>\chi(G)$ to showing only that $\mathcal{R}_{\chi(G)+1}(G)$ is connected, whenever $G$ belongs to a hereditary graph class. In our case, since every $(2K_2,K_4)$-free graph is $4$-colorable, it will be enough to prove that $\mathcal{R}_5(G)$ is connected. Like the graph classes in \cref{theorem:main2} and \cref{theorem:main3}, this applies whenever we work with subclasses of $(2K_2,K_4)$-free graphs obtained by forbidding additional induced subgraphs. This reduction is often implicit in graph recoloring arguments, and we include a proof for completeness.

\begin{lemma}\label{lem:only5}
    Let $\mathcal{G}$ be a hereditary class of graphs such that $\mathcal{R}_{\chi(G)+1}(G)$ is connected for every $G\in \mathcal{G}$, then $\mathcal{R}_{\ell}(G)$ is connected for every $G\in \mathcal{G}$ and every $\ell \geq  \chi(G)+1$. 
\end{lemma}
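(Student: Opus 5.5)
We argue by induction on $\ell$, for all graphs of $\mathcal{G}$ simultaneously. The base case $\ell=\chi(G)+1$ is exactly the hypothesis. For the inductive step, fix $G\in\mathcal{G}$ and $\ell\geq \chi(G)+2$, and assume $\mathcal{R}_{\ell-1}(H)$ is connected for every $H\in\mathcal{G}$ with $\ell-1\geq \chi(H)+1$. Given two $\ell$-colorings $\alpha,\beta$ of $G$, it suffices to show that each can be recolored to an $\ell$-coloring avoiding color $\ell$, and that any two such colorings are connected in $\mathcal{R}_\ell(G)$.

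\textbf{Connecting colorings that avoid color $\ell$.} The $\ell$-colorings of $G$ that avoid color $\ell$ are exactly the $(\ell-1)$-colorings of $G$. Since $\ell-1\geq\chi(G)+1$, the induction hypothesis applied to $G$ itself shows that $\mathcal{R}_{\ell-1}(G)$ is connected. Every recoloring step in $\mathcal{R}_{\ell-1}(G)$ is also a step in $\mathcal{R}_\ell(G)$, so any two such colorings are joined in $\mathcal{R}_\ell(G)$.

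\textbf{Removing color $\ell$.} This is the main step. Let $S=\alpha^{-1}(\ell)$, an independent set, and set $H=G-S$. By heredity $H\in\mathcal{G}$, and $\chi(H)\leq\chi(G)$. The restriction $\alpha|_H$ uses only colors $1,\dots,\ell-1$. Fix an optimal coloring $c$ of $G$ using colors $1,\dots,\chi(G)$; its restriction $c|_H$ is an $(\ell-1)$-coloring of $H$ avoiding color $\ell-1$, because $\chi(G)\leq\ell-2$.

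Since $\ell-1\geq\chi(H)+1$, the induction hypothesis gives a path in $\mathcal{R}_{\ell-1}(H)$ from $\alpha|_H$ to $c|_H$. We perform these steps in $G$ while keeping every vertex of $S$ at color $\ell$. Each step stays proper: recolored vertices use only colors up to $\ell-1$, so they never conflict with $S$, and $S$ is independent. After this, every vertex of $G$ has color $c(v)$ if $v\notin S$ and color $\ell$ if $v\in S$.

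Now recolor each vertex $v\in S$, one at a time, to $c(v)\leq\chi(G)$. This is proper because neighbors of $v$ outside $S$ carry their $c$-colors, which differ from $c(v)$, and $S$ has no internal edges. The result is the coloring $c$, which avoids color $\ell$. The same argument applies to $\beta$, so combining with the first part connects $\alpha$ to $\beta$.

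\textbf{Where care is needed.} The only delicate point is the induction formulation. It must be applied to the subgraph $H$, which is why heredity is needed, and it must satisfy $\ell-1\geq\chi(H)+1$, which holds since $\chi(H)\leq\chi(G)\leq\ell-2$. Recoloring to $c|_H$ rather than an arbitrary coloring is what ensures the vertices of $S$ can afterwards be freed from color $\ell$.
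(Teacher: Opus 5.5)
Your proof is correct and follows essentially the same argument as the paper: induction on $\ell$ over all graphs in $\mathcal{G}$; deleting the color-$\ell$ class $S$; using heredity and the induction hypothesis to recolor $G-S$ within colors $1,\dots,\ell-1$; and finally moving the vertices of $S$ off color $\ell$. One small remark is that your first step (connecting colorings that avoid color $\ell$) is unnecessary, because you send both $\alpha$ and $\beta$ to the same fixed coloring $c$; the paper needs that step only because it allows arbitrary targets $\varphi'$ and $\psi'$.
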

\begin{proof}
First, we write $\varphi \;\xleftrightarrow{\;\mathcal{R}_{\ell}\;}\varphi'$ if there is a path from $\varphi$ to $\varphi'$ in $\mathcal{R}_{\ell}(G)$. We prove by induction on $\ell$ the following statement: for every $G\in\mathcal{G}$ with $\chi(G)\leq \ell-1$, $\mathcal{R}_{\ell}(G)$ is connected. For $\ell=1$, the statement is trivial. Now let $\ell\geq 2$, and assume the statement holds for $\ell-1$. If $\ell=\chi(G)+1$, then the result follows directly from the hypothesis applied to $G$. Hence, in the inductive step, we may assume $\ell\geq \chi(G)+2$.

Let $G\in\mathcal{G}$, and write $\chi=\chi(G)$. Suppose $\ell\geq \chi+2$. Let $\varphi,\psi:V(G)\to[\ell]$ be two $\ell$-colorings of $G$. Let $\varphi'$ and $\psi'$ be $\ell$-colorings of $G$ which do not use the color $\ell$. Such colorings exist since $\ell\geq \chi+2$. We then construct paths
\[\varphi \;\xleftrightarrow{\;\mathcal{R}_{\ell}\;}\; \varphi' \;\xleftrightarrow{\;\mathcal{R}_{\ell}\;}\; \psi' \;\xleftrightarrow{\;\mathcal{R}_{\ell}\;}\; \psi.\]
The middle path exists because $\varphi'$ and $\psi'$ are $(\ell-1)$-colorings of $G$. Since $\chi(G)\leq \ell-2$, the induction hypothesis implies that they are connected in $\mathcal{R}_{\ell-1}(G)$, and hence they are connected in $\mathcal{R}_{\ell}(G)$, since each coloring along this path is also an $\ell$-coloring. It suffices to construct the first path, from $\varphi$ to $\varphi'$, since the argument for $\psi$ and $\psi'$ is
analogous.

Let $L=\{v\in V(G):\varphi(v)=\ell\}$. Note that $L$ is independent in $G$. If $L=\emptyset$, then we may take $\varphi'=\varphi$, and we are done. Thus, we may assume $L\neq\emptyset$. Let $G'=G-L$. Since $\mathcal{G}$ is hereditary, $G'\in\mathcal{G}$. Also $\chi(G')\leq \chi(G)\leq \ell-2$, and so, by the induction hypothesis, $\mathcal{R}_{\ell-1}(G')$ is connected.

Let $\varphi_{G'}$ and $\varphi'_{G'}$ be the restrictions of
$\varphi$ and $\varphi'$ to $G'$, respectively. Both are proper colorings of $G'$ using only colors in $\{1,\dots,\ell-1\}$. Hence there is a path $\varphi_{G'}=\gamma_0,\gamma_1,\ldots,\gamma_m=\varphi'_{G'}$ in $\mathcal{R}_{\ell-1}(G')$. For each $i$, extend $\gamma_i$ to a coloring $\hat{\gamma}_i$ of $G$ by defining
\[\hat{\gamma}_i(v)= \begin{cases}
\gamma_i(v), & \text{if } v\in V(G'),\\
\ell, & \text{if } v\in L.
\end{cases}\]
Each $\hat{\gamma}_i$ is a proper $\ell$-coloring of $G$, since no vertex
of $G'$ has color $\ell$ and $L$ is independent. Thus, $\varphi \;\xleftrightarrow{\;\mathcal{R}_{\ell}\;}\; \hat{\gamma}_m.$

It remains to recolor the vertices of $L$ to their colors under
$\varphi'$, one at a time. For each $v\in L$, we recolor $v$ from
$\ell$ to $\varphi'(v)\in[\ell-1]$. This is legal because the vertices
of $G'$ are currently colored as in $\varphi'$, and no neighbor of $v$
has color $\varphi'(v)$ in the proper coloring $\varphi'$. The
independence of $L$ also guarantees that recoloring the vertices of $L$
sequentially creates no conflict among them. After processing all
vertices of $L$, we obtain exactly $\varphi'$. Hence
$\varphi \;\xleftrightarrow{\;\mathcal{R}_{\ell}\;}\; \varphi'$, as
required.
\end{proof}

Next, we will use the (Optimal) Renaming Lemma introduced and proved by Bonamy and Bousquet \cite{BB18} which was recently improved (in terms of number of steps) by Cambie, van Batenburg and Cranston \cite{cambie2024sharp}. 

\begin{lemma}[Optimal Renaming Lemma \cite{cambie2024sharp}]\label{lem: renaming} Let $\alpha$ and $\beta$ be two $k$-colorings of $G$ that induce the same partition of vertices into color classes. Let $\ell\geq k+1$. Then $\alpha$ can be recolored to $\beta$ in $\mathcal{R}_{\ell}(G)$ by recoloring each vertex at most $\lfloor3n/2\rfloor$ times.
\end{lemma}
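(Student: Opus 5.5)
The plan is to reduce the statement to moving whole color classes one at a time. Let $C_1,\dots,C_k$ be the common color classes of $\alpha$ and $\beta$; some may be empty, and we discard those. Each $C_i$ is an independent set. Define a partial injection $f$ on $[\ell]$ by $f(\alpha(C_i))=\beta(C_i)$. The basic move is: if a class $C$ is currently monochromatic with color $a$, and a color $c$ is used by no vertex of $G$, then recolor the vertices of $C$ one at a time from $a$ to $c$. Every intermediate coloring is proper, because $C$ is independent and no vertex outside $C$ has color $c$. Throughout the process, every class stays monochromatic and at most $k$ colors are in use. Since $\ell\ge k+1$, some color is always free.

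\textbf{Step 1 (paths).} Classes with $\alpha(C)=\beta(C)$ are never touched. While some class $C$ is not yet at its target color and its target color $\beta(C)$ is currently unused, move $C$ directly to $\beta(C)$. This recolors each vertex of $C$ exactly once. It also frees the color that $C$ previously occupied, which may then allow another class to move. Iterating this processes every ``path'' component of the functional digraph of $f$, starting from the end whose target is unused.

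\textbf{Step 2 (cycles).} When Step 1 gets stuck, every class not yet at its target has its target color occupied by another such class. Since $f$ is injective, these classes form disjoint cycles of $f$, each of length at least $2$. For each cycle $C^{(1)}\to C^{(2)}\to\dots\to C^{(t)}\to C^{(1)}$, where $C^{(i)}$ must take the current color of $C^{(i+1)}$, do the following:
\begin{enumerate}
\item Move $C^{(1)}$ to a free color $c$.
\item Move $C^{(t)}$ into the color just vacated, then $C^{(t-1)}$ into the color $C^{(t)}$ vacated, and so on around the cycle, finishing with $C^{(2)}$.
\item Move $C^{(1)}$ from $c$ to its target, which $C^{(2)}$ has just vacated.
\end{enumerate}
Each vertex outside the parked class is recolored once, and the parked class is recolored twice. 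In the end every class has its $\beta$-color, so we have reached $\beta$.

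\textbf{Step 3 (counting).} Each vertex is recolored at most twice, so certainly at most $\lfloor 3n/2\rfloor$ times. The total number of recolorings is at most $n$ plus the total size of the parked classes. If we always park the smallest class of a cycle, its size is at most half the number of vertices in that cycle, because a cycle has at least two classes. So the total is at most $n+n/2$, which gives the $\lfloor 3n/2\rfloor$ bound, reading it as a total-step bound.

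\textbf{Main obstacle.} The delicate point is the invariant that a free color always exists at the moment a cycle must be broken. This holds because at most $k$ colors are in use, every class is monochromatic at each stage, and $\ell\ge k+1$. The cycle shifting must also be done in the right order, so that each target color is empty at the moment its class moves. I would verify this invariant carefully, and that the procedure terminates because Steps 1 and 2 each strictly increase the number of correctly colored classes.
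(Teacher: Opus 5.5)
Your proof is correct, and it is the standard permutation-cycle argument behind the cited Optimal Renaming Lemma; the paper gives no proof of its own, only the citation. You handle paths directly, break each cycle by parking its smallest class on a spare color, and obtain at most two recolorings per vertex and at most $\lfloor 3n/2\rfloor$ steps in total; since the total-step bound also bounds each vertex's count, this yields the statement exactly as printed, including the case $n=1$ where ``at most twice'' alone would not suffice.
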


We say a pair $u,v$ of nonadjacent vertices are \textit{comparable} if either $N(u)\subseteq N(v)$ or vice versa. Belavadi and Cameron proved the following useful lemma, which allows one to deduce the recolorability of a graph from that of a smaller induced subgraph.

\begin{lemma}[Lemma 3 in~\cite{BelavadiCameron2024}]\label{lem: comparable}
Let $G$ be a graph with comparable vertices $u$ and $v$ with $N(u)\subseteq N(v)$. If $G-u$ is recolorable, then so is $G$.
\end{lemma}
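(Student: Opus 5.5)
The plan is to reduce to a statement about colorings of $G-u$ with one extra color, and then lift recoloring sequences from $G-u$ to $G$ while keeping $u$ colored compatibly with $v$. Since $u$ and $v$ are nonadjacent and $N(u)\subseteq N(v)$, we have $\chi(G)=\chi(G-u)$. Indeed, any coloring of $G-u$ extends to $G$ by giving $u$ the color of $v$, which is legal because no neighbor of $u$ can have $v$'s color. So for any $\ell>\chi(G)$ we have $\ell>\chi(G-u)$, and $\mathcal{R}_\ell(G-u)$ is connected by hypothesis. Fix two $\ell$-colorings $\alpha,\beta$ of $G$. The goal is to connect each of them to a coloring in which $u$ has the same color as $v$, and then to walk between such colorings.

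\textbf{Step 1.} First recolor $u$ in $\alpha$ to $\alpha(v)$. This is a single legal step, since every neighbor of $u$ is a neighbor of $v$ and so none of them has color $\alpha(v)$. If $\alpha(u)=\alpha(v)$ already, nothing needs to be done. Do the same for $\beta$. We may now assume $\alpha(u)=\alpha(v)$ and $\beta(u)=\beta(v)$.

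\textbf{Step 2.} Let $\gamma_0,\dots,\gamma_m$ be a path in $\mathcal{R}_\ell(G-u)$ from $\alpha|_{G-u}$ to $\beta|_{G-u}$. We lift it step by step, maintaining the invariant that $u$ has the same color as $v$. Suppose $\gamma_{i+1}$ differs from $\gamma_i$ at a vertex $w$.
\begin{itemize}
\item If $w\neq v$, recolor $w$ in $G$. The only new potential conflict is with $u$, and only when $w\in N(u)$. But then $w\in N(v)$, and since $\gamma_{i+1}$ is proper in $G-u$, the new color of $w$ differs from the color of $v$, which equals the color of $u$. So the step is legal.
\item If $w=v$, first recolor $v$ to its new color $c$. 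This is legal because $u$ and $v$ are nonadjacent and $c$ is legal for $v$ in $G-u$. Then recolor $u$ to $c$, which is legal because $N(u)\subseteq N(v)$ and no neighbor of $v$ has color $c$.
\end{itemize}
At the end, the coloring is $\beta$ on $G-u$ and has $u$ colored $\beta(v)=\beta(u)$, so it equals $\beta$.

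The only mildly delicate point is the invariant in Step 2, and in particular handling the step that moves $v$: the lifted walk uses two steps there instead of one. Everything else is a direct check using $N(u)\subseteq N(v)$ and the nonadjacency of $u$ and $v$. The equality $\chi(G-u)=\chi(G)$ is what makes the hypothesis on $G-u$ applicable for every $\ell>\chi(G)$.
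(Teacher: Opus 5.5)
Your argument is correct and complete. The paper gives no proof of its own and simply cites Lemma~3 of Belavadi and Cameron; your proof is the standard one for that lemma. You recolor $u$ to $v$'s color, then lift a walk in $\mathcal{R}_\ell(G-u)$, re-synchronizing $u$ immediately after each move of $v$.

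One small remark: applying the hypothesis for every $\ell>\chi(G)$ only needs the trivial inequality $\chi(G-u)\le\chi(G)$, so the reverse inequality you prove is harmless but not required.
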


The next lemma gives a useful reduction that will be applied repeatedly in the recoloring arguments.

\begin{lemma}\label{claim:CoverI}
Let $G$ be a graph, and let $\ell\geq\chi(G)+1$. Let $\varphi$ be an $\ell$-coloring of $G$, and suppose there exist a $\chi(G)$-coloring $\psi$ of $G$, with color classes $I_1,\ldots,I_{\chi(G)}$, a color $\alpha\in[\ell]$, and an index $j\in[\chi(G)]$ such that
\[
I_j\;\subseteq\;V_\alpha,\text{ where } V_\alpha:= \{v\in V(G): \varphi(v)=\alpha\}.
\]
Let $G':=G-V_\alpha$. If $G'$ is recolorable, then $\varphi\xleftrightarrow{\;\mathcal{R}_{\ell}\;}\; \psi$.
\end{lemma}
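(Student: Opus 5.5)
Write $\chi=\chi(G)$. The plan is to keep color $\alpha$ frozen on $V_\alpha$ and make $G'=G-V_\alpha$ look like $\psi$ minus the class $I_j$. Then we release the remaining vertices of $V_\alpha\setminus I_j$ and finish with the Renaming Lemma (\cref{lem: renaming}).

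\textbf{Step 1: work inside $G'$.} Every vertex of $G'$ has a color in $[\ell]\setminus\{\alpha\}$, so $\varphi|_{G'}$ is an $(\ell-1)$-coloring of $G'$. Since $I_j\subseteq V_\alpha$, the restriction of $\psi$ to $G'$ uses only the $\chi-1$ classes $I_i\setminus V_\alpha$ for $i\neq j$. Hence $\chi(G')\le\chi-1$ and $\ell-1\ge\chi\ge\chi(G')+1$. Recolorability of $G'$ then gives a path in $\mathcal{R}_{\ell-1}(G')$, on the palette $[\ell]\setminus\{\alpha\}$, from $\varphi|_{G'}$ to a coloring $\sigma$ of $G'$. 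We choose $\sigma$ to induce the partition $\{I_i\setminus V_\alpha: i\ne j\}$, assigning the distinct colors $c_i\neq\alpha$ for $i\neq j$. During this path we keep every vertex of $V_\alpha$ colored $\alpha$. Since no vertex of $G'$ ever receives $\alpha$ and $V_\alpha$ is independent, every intermediate coloring is a proper $\ell$-coloring of $G$.

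\textbf{Step 2: release the rest of $V_\alpha$.} Each vertex $v\in V_\alpha\setminus I_j$ lies in some $I_i$ with $i\ne j$. We recolor $v$ from $\alpha$ to $c_i$, one vertex at a time. This is legal for two reasons. Every neighbor of $v$ in $G'$ carries color $c_i$ only if it lies in $I_i$, which is impossible since $I_i$ is independent. Every neighbor of $v$ inside $V_\alpha$ does not exist, since $V_\alpha$ is independent. Previously recolored vertices of $V_\alpha$ cause no conflict for the same reason. The vertices of $I_j$ remain colored $\alpha$ throughout.

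\textbf{Step 3: rename.} The resulting coloring has color classes exactly $I_1,\dots,I_\chi$: class $I_i$ has color $c_i$ for $i\neq j$, and class $I_j$ has color $\alpha$. It therefore induces the same partition as $\psi$ and uses $\chi$ colors. Since $\ell\ge\chi+1$, \cref{lem: renaming} recolors it to $\psi$ in $\mathcal{R}_\ell(G)$. Concatenating the three stages gives $\varphi\xleftrightarrow{\;\mathcal{R}_{\ell}\;}\psi$.

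The only delicate points are the bookkeeping in Step 1. First, $\chi(G')\le\chi-1$ is what makes the reduced palette of $\ell-1$ colors legitimately exceed $\chi(G')$. Second, the target coloring must be a valid coloring of $G'$ even when some class $I_i\setminus V_\alpha$ is empty, which causes no harm.
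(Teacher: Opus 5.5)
Your proof is correct and follows essentially the same route as the paper. Both freeze $V_\alpha$ at color $\alpha$, use recolorability of $G'$ on the palette $[\ell]\setminus\{\alpha\}$ (legitimate because $\chi(G')\le\chi(G)-1$), and then release $V_\alpha\setminus I_j$ into their classes. The one difference is that the paper absorbs the renaming into its opening phrase ``by relabeling the colors of $\psi$ if necessary'' and targets $\psi|_{G'}$ directly. You instead target an arbitrary coloring $\sigma$ and invoke the Renaming Lemma (Lemma~\ref{lem: renaming}) explicitly at the end, which is the more careful way to justify that relabeling.
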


\begin{proof}
By relabeling the colors of $\psi$ if necessary, we may assume that every vertex of $I_j$ has color $\alpha$ under $\psi$. We exhibit a sequence of recolorings in $\mathcal{R}_\ell(G)$ transforming $\varphi$ into $\psi$.

We first bring $\varphi$ and $\psi$ to agree on $G'=G-V_\alpha$. Let $\varphi'$ and $\psi'$ be the restrictions of $\varphi$ and $\psi$ to $G'$; both are proper colorings of $G'$. Since $I_j\subseteq V_\alpha$, the class $I_j$ is entirely removed in $G'$, so $\psi'$ uses at most $\chi(G)-1$ colors; in particular $\chi(G')\le\chi(G)-1$. As $\varphi$ is an $\ell$-coloring and every vertex of color $\alpha$ lies in $V_\alpha$, the coloring $\varphi'$ uses colors from $[\ell]\setminus\{\alpha\}$ that is a set of size $\ell-1\ge\chi(G)\ge\chi(G')+1$. Since $G'$ is recolorable and $\ell-1\ge\chi(G')+1$, the reconfiguration graph $\mathcal{R}_{\ell-1}(G')$ with colors $[\ell]\setminus\{\alpha\}$ is connected, so we have $\varphi'\xleftrightarrow{\;\mathcal{R}_{\ell-1}(G')\;}\psi'$ such that in every step the color $\alpha$ is never used. Now, we extend this sequence to $G$ as follows: each recoloring of a vertex $v\in V(G')$ remains proper in $G$, because the only neighbors of $v$ outside $G'$ lie in $V_\alpha$ and are colored $\alpha$, which is never used for recoloring a single vertex. Then, we reach a coloring $\varphi_1$ of $G$ with $\varphi_1|_{G'}=\psi'$ and $\varphi_1|_{V_\alpha}\equiv\alpha$. Hence $\varphi\xleftrightarrow{\;\mathcal{R}_\ell(G)\;}\varphi_1$.

It remains to transform $\varphi_1$ into $\psi$. For each $i\in[\chi(G)]\setminus\{j\}$, let $I'_i=I_i\cap V_\alpha$. Note that the vertices of such set have not been recolored in any step. Then, for each $i\in[\chi(G)]\setminus\{j\}$, we recolor each vertex of $I'_i$ to its color under $\psi$. 
This step is legal: if $v\in I'_i$, then no neighbor of $v$ currently has color $\psi(v)$. 

Indeed, the neighbors of $v$ in $G'$ are already colored according to
$\psi'=\psi|_{G'}$, and since $\psi$ is proper, none of them has color
$\psi(v)$. Moreover, $V_\alpha$ is independent, so recoloring vertices
inside $V_\alpha$ creates no conflict among themselves.
After all these moves, every vertex carries its color under $\psi$, so we have reached $\psi$. Hence $\varphi\xleftrightarrow{\;\mathcal{R}_\ell(G)\;}\psi$.
\end{proof}

Throughout this paper, we are going to use some extra notation for increasing and preserving the understanding and the clarity of the proofs. If $A$ is a set of vertices, the step $A\to \alpha$ means that the vertices of $A$ are recolored with color $\alpha$, one at a time. Such a step is legal if, at the moment the step is performed, no vertex of color $\alpha$ has a neighbor in $A$.

\subsubsection*{Outline of the proofs of Theorems \ref{theorem:main3}, \ref{theorem:main2} and \ref{theorem:main1}}

The general strategy for proving our main theorems can be summarized as follows. Let $G$ be a $(2K_2,K_4)$-free graph, with additional forbidden subgraphs depending on the theorem. We first find a $4$-coloring $\varphi$ of $G$, or equivalently, a partition of $V(G)$ into four independent sets $I_1,I_2,I_3,I_4$. Such a coloring exists by \cref{thm:2K2-K4-4-colorable}. We say that a $4$-coloring of $G$ is \textit{canonical with respect to $\varphi$} if its color classes are precisely $I_1,I_2,I_3,I_4$, up to renaming colors. The additional forbidden subgraphs will determine the choice of $\varphi$, and hence which $4$-colorings are canonical.

We would like to prove that the reconfiguration graph $\mathcal{R}_{\ell}(G)$ is connected with $\ell\geq 5$. Let $\ell\geq 5$ (if $G$ is $(2K_2,K_4,C_5)$-free graph, by Lemma \ref{lem:only5}, it suffices to consider $\ell=5$) and given an arbitrary $\ell$-coloring $\varphi'$ of $G$, our goal is to transform $\varphi'$, through a sequence of legal recoloring steps in $\mathcal{R}_\ell(G)$, into a canonical coloring. For this, it suffices to transform $\varphi'$ into a coloring in which one of the independent sets $I_j$ is monochromatic, that is, every vertex in $I_j$ receives the same color. Once this is achieved, we apply Lemma \ref{claim:CoverI}, which reduces the problem to recoloring the smaller graph $G'=G\setminus I_j$. By construction, $G'$ is $3$-colorable, and hence is recolorable by \cref{theo:3color}. Thus every $\ell$-coloring can be recolored to a canonical coloring. Finally, by the Renaming Lemma (Lemma \ref{lem: renaming}), any two canonical colorings lie in the same component of $\mathcal{R}_{\ell}(G)$, and so $\mathcal{R}_{\ell}(G)$ is connected.

\textbf{Notation.} Throughout the proofs, we follow a few conventions to avoid confusion. We reserve the symbols $\varphi,\psi$ and its variants, such as $\varphi'$ and $\psi_1$, for colorings. Since we will always work with $\ell$-colorings with $\ell\geq 5$, we use the color palette $\{\alpha,\beta,\gamma,\delta,\varepsilon\}$; and if we need a sixth color we use $\tau$. Thus, uppercase letters are reserved for sets of vertices and graphs, while lowercase letters are reserved for individual vertices. 

\section{The structure of $(2K_2,K_4)$-free graphs containing a $C_5$}\label{sec:THE-STRUCTURE}

To prove Theorems \ref{theorem:main2}, and \ref{theorem:main1}, we will use the structure of $(2K_2,K_4)$-free graphs, together with additional structural properties that arise when further induced subgraphs are forbidden. In \cite{GH19}, Gaspers and Huang proved that $(2K_2,K_4)$-free graphs are $4$-colorable, and we will use their structural framework here.

Let $G$ be a $(2K_2,K_4)$-free graph. By Lemma \ref{lem: comparable}, we may assume that $G$ has no comparable vertices. If $G$ is also $C_5$-free, then $G$ will be shown to be recolorable in the proof of \cref{theorem:main3}, and in \cref{subsection:C7-comp-structure}, we will introduce the separate structural notation needed. Thus, throughout this section, we assume that $G$ contains an induced $5$-cycle $C$. Let $1,2,3,4,5$ denote the vertices of $C$ in cyclic order. We now define the following sets, where indices are taken modulo $5$.
\begin{align*}
    U &= \{v\in V(G)\setminus V(C): N(v) \cap V(C) = V(C)\},\\
    F_i &= \{v\in V(G)\setminus V(C): N(v) \cap V(C) = V(C)\setminus \{i\}\},\\
    Y_i &= \{v\in V(G)\setminus V(C): N(v)\cap V(C) = \{i,i+2,i-2\}\},\\
    R_i &= \{v\in V(G)\setminus V(C): N(v)\cap V(C) = \{i-1,i+1\}\},\\
    Z &= \{v\in V(G)\setminus V(C): N(v)\cap V(C) = \emptyset\}.
\end{align*}

We now establish structural properties of these sets, many of which can be found in \cite{GH19}. We begin with the properties that follow only from forbidding $2K_2$ and $K_4$. We also note that a vertex in $R_i$, $Y_i$, $F_i$, or $U$, together with $V(C)$, induces precisely an $H_2$, $H_3$, $H_4$, or $W_5$, respectively. Thus, when we additionally forbid one of the graphs in $\{H_2,H_3,H_4,W_5\}$, the corresponding family of sets must be empty. We separate and prove these additional consequences in subsequent lemmas.

\begin{lemma}[\cite{GH19}]
Let $G$ be a $(2K_2,K_4)$-free graph, let $C=12345$ be an induced $5$-cycle in $G$, and define $U$, $F_i$, $Y_i$, $R_i$, and $Z$ as above. The following hold, where indices are taken modulo $5$. Moreover, each assertion involving $i$ holds for every $i\in[5]$.
\begin{enumerate}[label={(1.\arabic*)}, ref={1.\arabic*}]
\item\label{item:1.1}
$V(G)=V(C)\cup U\cup Z\cup \bigcup_{i=1}^5(F_i\cup Y_i\cup R_i)$. Moreover, the sets $U$ and $Z$, and each of the sets $F_i$, $Y_i$, and $R_i$, are independent.

\item\label{item:1.2}
$U$ is anticomplete to $F_i\cup Y_i$.

\item\label{item:1.3}
$F_i$ is complete to $Y_{i-2}\cup Y_{i+2}\cup R_{i-1}\cup R_{i+1}$ and anticomplete to $Y_{i-1}\cup Y_i\cup Y_{i+1}$. Moreover, at least one of $F_i$ and $F_{i+2}$ is empty.

\item\label{item:1.4}
$Y_i$ is complete to $Y_{i+1}\cup R_i$. Moreover, each vertex of $Y_i$ is anticomplete to at least one of $Y_{i-2}$ and $Y_{i+2}$.

\item\label{item:1.5}
$R_i$ is complete to $R_{i+1}$. Moreover, either $R_i$ is anticomplete to $Y_{i+1}$ or $R_{i+1}$ is anticomplete to $Y_i$. Each vertex of $R_i$ is anticomplete to at least one of $Y_{i+1}$ and $Y_{i+2}$, and to at least one of $Y_{i-1}$ and $Y_{i-2}$.

\item\label{item:1.6}
$Z$ is anticomplete to $R_i$.
\end{enumerate}
\end{lemma}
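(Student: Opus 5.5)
The plan is to prove every item by the same elementary method. For two vertices $x,y$ outside $C$ whose neighbourhoods on $C$ are known, we either exhibit an induced $2K_2$ formed by one edge at $x$ and one edge at $y$ (or an edge of $C$), or exhibit a $K_4$ formed by $x$, $y$ and an edge of $C$ lying in $N(x)\cap N(y)$. The two recurring patterns are: \emph{(a)} if $x\sim y$ and $N(x)\cap N(y)\cap V(C)$ contains two consecutive vertices of $C$, we get a $K_4$; \emph{(b)} if $xy$ is an edge (or $x$ and $y$ each have a private neighbour) and there is an edge of $C$ avoiding the relevant neighbourhoods, we get a $2K_2$. No comparability reduction is needed, only $(2K_2,K_4)$-freeness and the fact that $C$ is induced.

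For (\ref{item:1.1}), we classify the possible sets $S=N(v)\cap V(C)$ up to rotation.
\begin{itemize}
\item If $S=\{i\}$, $S=\{i,i+1\}$ or $S=\{i-1,i,i+1\}$, then the edge $vi$ together with $(i+2)(i+3)$ is an induced $2K_2$.
\item The remaining sets of size at most $3$ are exactly $\emptyset$, the distance-two pairs $\{i-1,i+1\}$, and the triples $\{i,i+2,i-2\}$. These give $Z$, $R_i$ and $Y_i$.
\item Sets of size $4$ and $5$ give $F_i$ and $U$.
\end{itemize}
For independence, pattern (a) works for $U$, $F_i$ and $Y_i$. Two vertices in $F_i$ share $i+1,i+2$, and two vertices in $Y_i$ share $i+2,i+3$. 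Pattern (b) works for $R_i$, using $uv$ and $(i+2)(i+3)$, and for $Z$, using $uv$ and any edge of $C$. Item (\ref{item:1.2}) is pattern (a): $u\in U$ and $v\in F_i\cup Y_i$ always have two consecutive common neighbours on $C$. Item (\ref{item:1.6}) is pattern (b): for $z\sim r$, use $zr$ and $(i+2)(i+3)$.

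For the completeness and anticompleteness statements in (\ref{item:1.3})--(\ref{item:1.5}), we go through the pairs of types one by one. To prove $A$ is complete to $B$, take $a\in A$ and $b\in B$ nonadjacent. Then choose $p\in N(a)\cap V(C)\setminus N(b)$ and $q\in N(b)\cap V(C)\setminus N(a)$ with $p\not\sim q$, so that $ap,bq$ is an induced $2K_2$. For example, if $f\in F_i$ and $y\in Y_{i+2}$ are nonadjacent, then $fi+1$ and $yi$ work. To prove anticompleteness, such as $F_i$ versus $Y_{i-1}\cup Y_i\cup Y_{i+1}$, we check that adjacency would create two consecutive common neighbours on $C$, giving a $K_4$. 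The statement that one of $F_i,F_{i+2}$ is empty is handled as follows. If $u\in F_i$ and $w\in F_{i+2}$ are adjacent, they share $i+3,i+4$, so we get a $K_4$. If they are nonadjacent, then $u(i+2)$ and $wi$ form an induced $2K_2$.

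I expect the main obstacle to be the ``moreover'' clauses in (\ref{item:1.4}) and (\ref{item:1.5}). These are not pairwise-set statements but involve a single vertex against two sets, or an either/or between two sets. For ``each $y\in Y_i$ is anticomplete to one of $Y_{i-2},Y_{i+2}$'', suppose $y$ has neighbours $a\in Y_{i+2}$ and $b\in Y_{i-2}$. We first use the already-proved facts (independence, and completeness of $Y_j$ to $Y_{j+1}$) to pin down the edges among $a,b,y$. We then search for a $K_4$ or $2K_2$ among $y,a,b$ and suitable cycle vertices; the candidate $2K_2$ is $ya$ versus $b$ with a neighbour of $b$ on $C$ that misses both $y$ and $a$. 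The either/or for $R_i$ versus $Y_{i+1}$ and $R_{i+1}$ versus $Y_i$ is handled the same way. Take $r\in R_i$ adjacent to $y'\in Y_{i+1}$ and $r'\in R_{i+1}$ adjacent to $y\in Y_i$. Then compare the two edges $ry'$ and $r'y$, using the completeness facts ($R_i$ complete to $R_{i+1}$, $Y_i$ complete to $R_i$) to determine the cross adjacencies. Either those adjacencies close a $K_4$, or the two edges form an induced $2K_2$. I will do this bookkeeping carefully, with indices taken modulo $5$.
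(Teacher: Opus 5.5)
The paper does not prove this lemma itself; it quotes it from Gaspers and Huang. So I am judging your plan on its own merits.

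\textbf{What is correct.} The method is the right one. Your arguments for the following are correct as written:
\begin{itemize}
\item (1.1), (1.2) and (1.6);
\item all the independence claims;
\item the anticompleteness part of (1.3);
\item the claim that one of $F_i,F_{i+2}$ is empty.
\end{itemize}

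\textbf{A wrong sample witness.} For nonadjacent $f\in F_i$ and $y\in Y_{i+2}$ you propose the edges $f(i+1)$ and $yi$. But $i(i+1)$ is an edge of $C$, so $\{f,i+1,i,y\}$ induces a $P_4$, not a $2K_2$. Vertices of $Y_{i+2}$ see $\{i-1,i,i+2\}$ on $C$, so the private neighbour of $f$ you must use is $i+3$; the pair $f(i+3)$, $yi$ works. The condition $p\not\sim q$ in your template therefore has to be checked pair by pair. It does hold for a suitable choice in every completeness claim of (1.3)--(1.5).

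\textbf{The genuine gap.} It lies in the ``moreover'' clauses that you yourself flag as the main obstacle. The proposal does not prove them, and the one concrete candidate you name for (1.4) cannot exist. Suppose $y\in Y_i$ has neighbours $a\in Y_{i+2}$ and $b\in Y_{i-2}$. Then $b\sim y$ by assumption. Also $b\sim a$, because $i-2\equiv i+3$ is consecutive to $i+2$, so $Y_{i+2}$ is complete to $Y_{i-2}$ by the first part of (1.4). Hence no induced $2K_2$ can use the edge $ya$ together with an edge at $b$.

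The missing observation is that this triangle has a common neighbour on $C$. Vertices of $Y_{i+2}$ see $\{i-1,i,i+2\}$, and vertices of $Y_{i-2}$ see $\{i-2,i,i+1\}$. So all of $y,a,b$ are adjacent to $i$, and $\{i,y,a,b\}$ is a $K_4$.

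The last sentence of (1.5), which you never treat explicitly, is handled the same way. If $r\in R_i$ has neighbours in $Y_{i+1}$ and $Y_{i+2}$, those two neighbours are adjacent (consecutive $Y$-sets are complete to each other), and all three vertices are adjacent to $i-1$. For $Y_{i-1}$ and $Y_{i-2}$, all three are adjacent to $i+1$. Either way you get a $K_4$.

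For the either/or clause of (1.5), no dichotomy is needed. The four edges $rr'$, $ry$, $r'y'$ and $yy'$ are all forced:
\begin{itemize}
\item $rr'$, since $R_i$ is complete to $R_{i+1}$;
\item $ry$, since $Y_i$ is complete to $R_i$;
\item $r'y'$, since $Y_{i+1}$ is complete to $R_{i+1}$;
\item $yy'$, since $Y_i$ is complete to $Y_{i+1}$.
\end{itemize}
Together with the two assumed edges, $\{r,r',y,y'\}$ is always a $K_4$. The ``induced $2K_2$'' branch of your sketch never occurs.
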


Next we consider the additional structural claims when $G$ is also $H_3$-free with no comparable vertices.

\begin{lemma}\label{lem:H3-free-structure}
Let $G$ be a $(2K_2,K_4,H_3)$-free graph with no comparable vertices. Let $C=12345$ be an induced $5$-cycle in $G$, and define $U$, $F_i$, $Y_i$, $R_i$, and $Z$ with respect to $C$ as above. The following hold, where indices are taken modulo $5$. Moreover, each assertion involving $i$ holds for every $i\in[5]$.
\begin{enumerate}[label={(2.\arabic*)}, ref={2.\arabic*}]
\item\label{item:2.1} $Y_i = \emptyset$

\item\label{item:2.2}
$F_i$ is complete to $R_{i-2}\cup R_{i+2}$.

\item\label{item:2.3}
$R_i$ is anticomplete to $R_{i-2}\cup R_{i+2}$.

\item\label{item:2.4}
$Z=\emptyset$.

\end{enumerate}
\end{lemma}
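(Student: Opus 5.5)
The plan is to prove (2.1)--(2.4) one at a time, using the basic observation from the previous lemma: a vertex whose neighbourhood on an induced $5$-cycle has the $Y$-pattern $\{j,j+2,j-2\}$ induces an $H_3$ together with that cycle. For (2.2) and (2.3) the idea is to replace one vertex of $C$ by a vertex of some $R_k$, producing a new induced $5$-cycle $C'$. Then we show that the forbidden adjacency or non-adjacency would create a $Y$-type vertex with respect to $C'$. For (2.4) we use the assumption that $G$ has no comparable vertices.

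\textbf{(2.1).} A vertex of $Y_i$ together with $V(C)$ induces $H_3$, so $Y_i=\emptyset$.

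\textbf{(2.2).} Let $f\in F_i$ and $r\in R_{i+2}$, so $N(r)\cap V(C)=\{i+1,i+3\}$, and suppose $fr\notin E(G)$. Since $r$ is adjacent on $C$ only to $i+1$ and $i+3$, the cycle $C'=i,\,i+1,\,r,\,i+3,\,i+4$ is induced. Label its positions $1,\dots,5$ in this order. The vertex $f$ is adjacent to $i+1,i+3,i+4$ and not to $i$ or $r$. So its neighbourhood on $C'$ is the set of positions $\{2,4,5\}=\{j,j+2,j-2\}$ with $j=2$. Hence $f$ together with $C'$ induces $H_3$, a contradiction. The case $r\in R_{i-2}$ follows by the symmetric argument (reflect the indices).

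\textbf{(2.3).} Let $r\in R_i$ and $s\in R_{i+2}$, and suppose $rs\in E(G)$. Replace $i$ by $r$: the cycle $C'=r,\,i+1,\,i+2,\,i+3,\,i+4$ is induced, because $N(r)\cap V(C)=\{i-1,i+1\}$. The vertex $s$ is adjacent to $r$, $i+1$ and $i+3$, and not to $i+2$ or $i+4$. So on $C'$ its neighbourhood is the set of positions $\{1,2,4\}=\{j,j+2,j-2\}$ with $j=4$, again giving an induced $H_3$. The case of $R_{i-2}$ is the same statement with the roles of $r$ and $s$ exchanged.

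\textbf{(2.4).} Suppose $z\in Z$. We locate $N(z)$ using the earlier facts:
\begin{itemize}
\item $Z$ is independent by (1.1);
\item $Z$ is anticomplete to every $R_k$ by (1.6);
\item $Y_k=\emptyset$ for all $k$ by (2.1);
\item $z$ has no neighbour on $C$.
\end{itemize}
Hence $N(z)\subseteq U\cup\bigcup_k F_k$.

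By (1.3), $F_k$ and $F_{k+2}$ are never both nonempty. In $\mathbb{Z}_5$, any three indices contain two at cyclic distance $2$, so at most two of the sets $F_k$ are nonempty. Choose $v\in[5]$ with $F_v=\emptyset$. The vertex $v$ of $C$ is adjacent to every vertex of $U$ and of $F_k$ for $k\neq v$. Therefore $N(z)\subseteq N(v)$. Also $z$ and $v$ are nonadjacent, so they are comparable, contradicting the hypothesis. Hence $Z=\emptyset$.

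The only mildly delicate steps are checking that the substituted cycles $C'$ are induced and matching the positions correctly against the $Y$-pattern. For (2.4), the key point is the counting argument showing that some $F_v$ must be empty.
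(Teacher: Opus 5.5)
Your proof is correct and follows the paper's route. For (2.2) and (2.3) you swap a cycle vertex for an $R$-vertex to get a new induced $5$-cycle on which the offending vertex has the $H_3$ neighbourhood pattern; your list $\{r,i+1,i+3\}$ in (2.3) is the right one, whereas the paper's text writes $i-1$ where $i+3$ is meant. For (2.4) you use the same ingredients as the paper, namely (1.1), (1.6), (2.1), (1.3) and the no-comparable-vertices hypothesis. The only difference is presentation: you pick an empty $F_v$ and show $N(z)\subseteq N(v)$ directly, while the paper shows every $F_i$ would have to be nonempty. Note that (1.3) alone already gives an empty $F_v$, so your count of at most two nonempty $F_k$ is more than you need.
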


\begin{proof}
(\ref{item:2.1}) holds trivially since $G$ is $H_3$-free. 

First, we prove (\ref{item:2.2}). Suppose, for a contradiction, that there exist $f\in F_i$ and $r\in R_{i+2}$ such that $fr\notin E(G)$. Then $(V(C)\setminus \{i+2\})\cup {r}$ induces a $5$-cycle. Since $f$ is adjacent to exactly three vertices of this cycle, namely $i-1$, $i+1$, and $i+3$, the vertices of this cycle together with $f$ induce an $H_3$, a contradiction. By symmetry, $F_i$ is complete to $R_{i+2}\cup R_{i-2}$.

Next, we prove (\ref{item:2.3}). Suppose, for a contradiction, that there exist adjacent vertices $r\in R_i$ and $r'\in R_{i+2}$. Then $(V(C)\setminus \{i\})\cup {r}$ induces a $5$-cycle. Since $r'$ is adjacent to exactly three vertices of this cycle, namely $i+1$, $i-1$, and $r$, the vertices of this cycle together with $r'$ induce an $H_3$, a contradiction. By symmetry, $R_i$ is anticomplete to $R_{i+2}\cup R_{i-2}$.

Finally, we prove (\ref{item:2.4}). Suppose, for a contradiction, that $z\in Z$. Fix $i\in[5]$. Since $z$ has no neighbors in $C$, we have $N(i)\nsubseteq N(z)$, and since $z$ and $i$ are nonadjacent and $G$ has no comparable vertices, it follows that $N(z)\nsubseteq N(i)$. Thus there exists some $x\in N(z)\setminus N(i)$. By (\ref{item:1.1}) and (\ref{item:1.6}), we have $x\notin Z\cup \bigcup_{j=1}^5 R_j$. Since $G$ is $H_3$-free, $Y_j=\emptyset$ for every $j\in[5]$, and since every vertex of $U$ is adjacent to $i$, we also have $x\notin U$. Therefore $x\in F_j$ for some $j$. Since $x\notin N(i)$, we must have $j=i$, so $F_i\neq\emptyset$. As this holds for every $i\in[5]$, we contradict (\ref{item:1.3}), which says that at least one of $F_i$ and $F_{i+2}$ is empty. Therefore $Z=\emptyset$.
\end{proof}

Next we turn to $(2K_2,K_4,H_4)$-free graphs without comparable vertices and get the following additional claims depending on if $G$ contains a $5$-wheel. We point out that a few of the items in the following two lemmas can be found in \cite{GH19}. However, since the proofs are short and simple, we include them here for completeness.

\begin{lemma}\label{lem:H4-free-structure}
Let $G$ be a $(2K_2,K_4,H_4)$-free graph with no comparable vertices. Let $C=12345$ be an induced $5$-cycle in $G$, and define $U$, $F_i$, $Y_i$, $R_i$, and $Z$ with respect to $C$ as above. The following hold, where indices are taken modulo $5$. Moreover, each assertion involving $i$ holds for every $i\in[5]$.
\begin{enumerate}[label={(3.\arabic*)}, ref={3.\arabic*}]
\item\label{item:3.1}
$F_i = \emptyset$

\item\label{item:3.2}
$U$ is complete to $R_i$.

\item\label{item:3.3}
$R_i$ is anticomplete to $Y_{i-1}\cup Y_{i+1}$.

\end{enumerate}
\end{lemma}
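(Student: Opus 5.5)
All three items come from the same device. Replace one vertex of $C$ by a vertex of some $R_i$ to obtain a new induced $5$-cycle. Then observe that a suitable vertex sees exactly four vertices of that new cycle, so the cycle together with that vertex induces an $H_4$. This uses only the $H_4$-freeness and the definitions of the sets; the assumption of no comparable vertices is not needed here. The preliminary fact is the one already used in the proof of \cref{lem:H3-free-structure}: for $r\in R_i$, the set $C_r:=(V(C)\setminus\{i\})\cup\{r\}$ induces a $5$-cycle, namely $r,i+1,i+2,i+3,i-1$ in cyclic order. This holds because $r$ is adjacent to exactly $i-1$ and $i+1$ among the vertices of $C$.

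For (\ref{item:3.1}): as noted before \cref{lem:H3-free-structure}, a vertex $f\in F_i$ together with $V(C)$ induces exactly $H_4$, since $f$ sees exactly four vertices of the cycle. So $F_i=\emptyset$ because $G$ is $H_4$-free.

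For (\ref{item:3.2}), suppose for a contradiction that $u\in U$ and $r\in R_i$ are nonadjacent. Then $u$ is adjacent to $i+1,i+2,i+3,i-1$ and not to $r$. Hence $u$ has exactly four neighbors on the induced $5$-cycle $C_r$, so $C_r\cup\{u\}$ induces $H_4$, a contradiction.

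For (\ref{item:3.3}), suppose $r\in R_i$ is adjacent to some $y\in Y_{i+1}$. By definition $N(y)\cap V(C)=\{i+1,i+3,i-1\}$, so $y$ is adjacent to $r,i+1,i+3,i-1$ and not to $i+2$. Thus $y$ has exactly four neighbors on $C_r$, giving an induced $H_4$. If instead $y\in Y_{i-1}$, then $N(y)\cap V(C)=\{i-1,i+1,i+2\}$, so on $C_r$ the vertex $y$ sees $r,i-1,i+1,i+2$ and misses only $i+3$, again an $H_4$. Both cases are contradictions, which proves the claim.

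The only real care needed is in the index bookkeeping modulo $5$ for the $Y$ sets, so as to confirm that the missed vertex is a cycle vertex other than $r$. No step is a genuine obstacle.
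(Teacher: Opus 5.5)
Your proof is correct and is essentially the paper's argument. In both, item (3.1) is immediate from $H_4$-freeness, and for (3.2) and (3.3) you replace $i$ by $r\in R_i$ to get the induced $5$-cycle $(V(C)\setminus\{i\})\cup\{r\}$; the vertex $u$ (resp.\ $y$) then has exactly four neighbors on it, giving an induced $H_4$. The only differences are that you verify the $Y_{i-1}$ case explicitly where the paper appeals to symmetry, and that you correctly note the no-comparable-vertices hypothesis is not used.
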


\begin{proof}
(\ref{item:3.1}) holds trivially since $G$ is $H_4$-free. 

Next, we prove (\ref{item:3.2}). Suppose, for a contradiction, that there exist $u\in U$ and $r\in R_i$ such that $ur\notin E(G)$. Then $(V(C)\setminus \{i\})\cup {r}$ induces a $C_5$, and together with $u$, these vertices induce an $H_4$, a contradiction. Thus, $U$ is complete to $R_i$.

Finally, we prove (\ref{item:3.3}). Suppose, for a contradiction, that there exist $r\in R_i$ and $y\in Y_{i+1}$ such that $ry\in E(G)$. Then $(V(C)\setminus \{i\})\cup {r}$ induces a $C_5$, and together with $y$, these vertices induce an $H_4$, a contradiction. Thus, by symmetry, $R_i$ is anticomplete to $Y_{i-1}\cup Y_{i+1}$.
\end{proof}

\begin{lemma}\label{lem:H4-W5-structure}
Let $G$ be a $(2K_2,K_4,H_4)$-free graph with no comparable vertices that contains a $W_5$ with induced $5$-cycle $C=12345$, and define $U$, $F_i$, $Y_i$, $R_i$, and $Z$ with respect to $C$ as above. The following hold, where indices are taken modulo $5$. Moreover, each assertion involving $i$ holds for every $i\in[5]$.

\begin{enumerate}[label={(4.\arabic*)}, ref={4.\arabic*}]

\item\label{item:4.1}
$R_i=\emptyset$.

\item\label{item:4.2}
$|U|=1$ and $U$ is complete to $Z$.

\item\label{item:4.3} $Y_i$ is anticomplete to $Y_{i-2}\cup Y_{i+2}$.

\end{enumerate}
\end{lemma}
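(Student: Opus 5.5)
The plan is to prove the three items in the order (\ref{item:4.1}), (\ref{item:4.2}), (\ref{item:4.3}). Throughout I use that $G$ contains a $W_5$ with rim $C$, so $U\neq\emptyset$, together with (\ref{item:3.1}), which gives $F_j=\emptyset$ for all $j$. The tools are the no-comparable-vertices hypothesis, the $2K_2$- and $K_4$-freeness, and the earlier structural items.

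\textbf{Item (\ref{item:4.1}).} Suppose $r\in R_i$ and fix $u\in U$. The vertices $r$ and $i$ are nonadjacent and have the same neighbours on $C$, namely $i-1$ and $i+1$. Since $G$ has no comparable vertices, $N(r)\nsubseteq N(i)$, so there is some $x\in N(r)\setminus N(i)$. I would locate $x$ using (\ref{item:1.1}):
\begin{itemize}
\item $x\notin V(C)$, since the cycle neighbours of $r$ lie in $N(i)$;
\item $x\notin U$, since $U$ is complete to $i$;
\item $x\notin Z$ by (\ref{item:1.6});
\item $x\notin F_j$, as these sets are empty;
\item $x\notin R_i$, as $R_i$ is independent;
\item $x\notin R_{i\pm1}$, as these are adjacent to $i$.
\end{itemize}
If $x\in Y_j$, then $i\notin\{j,j\pm2\}$ forces $j=i\pm1$, which contradicts (\ref{item:3.3}). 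So $x\in R_{i+2}$ (or symmetrically $x\in R_{i-2}$). Then $x$ and $r$ are both adjacent to $i+1$ and, by (\ref{item:3.2}), both adjacent to $u$, while $u\sim i+1$. Hence $\{u,r,x,i+1\}$ is a $K_4$, a contradiction.

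\textbf{Item (\ref{item:4.2}).} First I determine $N(u)$ for $u\in U$. By (\ref{item:1.2}), $u$ is anticomplete to every $Y_j$. Also $F_j=R_j=\emptyset$ and $U$ is independent. Hence $N(u)\subseteq V(C)\cup Z$.

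Suppose $u,u'\in U$ are distinct, so they are nonadjacent. Since they are not comparable, there exist $z\in N(u)\setminus N(u')$ and $z'\in N(u')\setminus N(u)$, and both lie in $Z$. Because $Z$ is independent, the edges $uz$ and $u'z'$ form an induced $2K_2$. Hence $|U|=1$; write $U=\{u\}$.

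Now suppose $z\in Z$ is nonadjacent to $u$. The vertex $z$ has no neighbours in $C$, none in the $R_j$ or $F_j$ (empty), none in $Z$ (independent), and is not adjacent to $u$, so $N(z)\subseteq\bigcup_j Y_j$. Fix a cycle vertex $i$. Since $z$ and $i$ are not comparable, there is some $x\in N(z)\setminus N(i)$; this $x$ lies in some $Y_j$ and is nonadjacent to $i$. The vertex $u$ is not adjacent to $x$ by (\ref{item:1.2}), and $z$ is adjacent to neither $u$ nor $i$. Therefore the edges $zx$ and $ui$ form an induced $2K_2$, a contradiction.

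\textbf{Item (\ref{item:4.3}).} Suppose $y\in Y_i$ is adjacent to $y'\in Y_{i+2}$. Their cycle neighbourhoods are $\{i,i+2,i+3\}$ and $\{i,i+2,i+4\}$, so neither is adjacent to $i+1$. Neither is adjacent to $u$, by (\ref{item:1.2}). Since $u\sim i+1$, the edges $yy'$ and $u(i+1)$ form an induced $2K_2$, a contradiction. The case $Y_{i-2}$ follows by symmetry.

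The only step requiring real care is the case analysis in (\ref{item:4.1}). There the key inputs are (\ref{item:3.3}), which rules out $x\in Y_{i\pm1}$, and (\ref{item:3.2}), which supplies the fourth vertex of the $K_4$. The other two items reduce to a single $2K_2$ each once $U$ is known to be anticomplete to the $Y_j$.
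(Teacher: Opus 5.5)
Your proof is correct and follows essentially the same route as the paper: for (4.1) you show $N(r)\subseteq N(i)$ for $r\in R_i$, for (4.2) you use the same induced $2K_2$ on $zx$ and $ui$ (only proving $|U|=1$ first via private $Z$-neighbours, rather than deducing it afterwards from equal neighbourhoods), and (4.3) is identical. One worthwhile difference is that you exclude $R_{i\pm2}$-neighbours of $r$ with the $K_4$ on $\{u,r,x,i\pm1\}$. The paper instead claims that $rr'$ and $ui$ induce a $2K_2$, which fails because $u$ is adjacent to both $r$ and $r'$ by (3.2), so your justification is the one that actually works.
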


\begin{proof}
We first prove (\ref{item:4.1}). To show that $R_i=\emptyset$ for all $i\in[5]$, we first note that $R_i$ is anticomplete to $R_{i+2}\cup R_{i-2}$. Indeed, if $rr'\in E(G)$ for some $r\in R_i$ and $r'\in R_{i+2}$, and $u\in U$, then $rr'$ and $ui$ induce a $2K_2$, a contradiction. Now by (\ref{item:1.1}), (\ref{item:1.4}), (\ref{item:1.5}), (\ref{item:1.6}), (\ref{item:3.2}), and (\ref{item:3.3}), for any $r\in R_i$,
\[N(r)\subseteq {i+1,i-1}\cup U\cup R_{i-1}\cup R_{i+1}\cup Y_i\cup Y_{i-2}\cup Y_{i+2}\subseteq N(i).\]
Since $G$ has no comparable vertices, $R_i=\emptyset$ for all $i\in[5]$.

We next prove (\ref{item:4.2}). Let $u\in U$, which exists since $G$ contains a $5$-wheel. We first show that $U$ is complete to $Z$. Suppose, for a contradiction, that there exists $z\in Z$ such that $uz\notin E(G)$. Since $G$ has no comparable vertices and $z$ is nonadjacent to every vertex of $C$, for each $i\in[5]$ there is a vertex $v\in N(z)\setminus N(i)$. Since $F_i=\emptyset$ by (\ref{item:3.1}), $R_i=\emptyset$ by (\ref{item:4.1}), $Z$ is independent by (\ref{item:1.1}), and every vertex of $U$ is adjacent to $i$, we have $v\in Y_j$ for some $j\in[5]$. By (\ref{item:1.2}), $u$ is nonadjacent to $v$. Hence $ui$ and $zv$ induce a $2K_2$, a contradiction. Therefore $U$ is complete to $Z$.

It remains to show that $|U|=1$. Since $U$ is independent by (\ref{item:1.1}), anticomplete to each $Y_i$ by (\ref{item:1.2}), complete to each vertex of $C$ by definition, and complete to $Z$ by the previous paragraph, any two vertices of $U$ have the same neighborhood. Since $G$ has no comparable vertices, it follows that $|U|=1$.

Finally to show (\ref{item:4.3}) we may assume for contradiction that by symmetry, $y_i\in Y_i$ and $y_{i+2}\in Y_{i+2}$ with $y_iy_{i+2}\in E(G)$, then $y_iy_{i+2}$ and $u(i+1)$ (where $U = \{u\}$) induces a $2K_2$, a contradiction.
\end{proof}

\section{Proof of \cref{theorem:main1}}\label{sec:FIRST-THEOREM}

The goal of this section is to prove that every $(2K_2,K_4,H_4)$-free graph containing an induced $5$-wheel is recolorable. First, we will use the following useful lemma for $2K_2$-free graphs. 

\begin{lemma}[Lemma~7 in~\cite{BelavadiCameron2024}]\label{lem: universal to independent} Let $G$ be a $2K_2$-free graph and suppose $V(G)$ can be partitioned into independent sets $A_1\dots A_t$ such that $A_1$ is (inclusion-wise) maximal. Then for each $i\in \{2,3,\dots t\}$, $A_1$ contains a vertex complete to $A_i$.
\end{lemma}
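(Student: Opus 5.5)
Fix $i\in\{2,\dots,t\}$ and argue by contradiction: suppose no vertex of $A_1$ is complete to $A_i$. For each $a\in A_1$ there is then some $b\in A_i$ with $ab\notin E(G)$. The plan is to use maximality of $A_1$ together with the $2K_2$-free hypothesis to produce an induced $2K_2$.

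\textbf{Step 1 (maximality gives neighbours in $A_1$).} Since $A_1$ is inclusion-wise maximal and $A_i$ is disjoint from $A_1$, every $b\in A_i$ has at least one neighbour in $A_1$. Otherwise $A_1\cup\{b\}$ would be an independent set strictly containing $A_1$.

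\textbf{Step 2 (choose an extremal vertex).} Choose $a\in A_1$ whose neighbourhood $N(a)\cap A_i$ is maximal under inclusion among all vertices of $A_1$. By assumption there is $b\in A_i$ with $ab\notin E(G)$. By Step 1, $b$ has a neighbour $a'\in A_1$, and $a'\neq a$.

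\textbf{Step 3 (use the extremal choice).} By maximality of $N(a)\cap A_i$, the set $N(a')\cap A_i$ does not strictly contain $N(a)\cap A_i$. But $b\in N(a')\setminus N(a)$, so $N(a')\cap A_i\neq N(a)\cap A_i$, and hence $N(a)\cap A_i\not\subseteq N(a')\cap A_i$. Therefore there is $c\in A_i$ with $ac\in E(G)$ and $a'c\notin E(G)$.

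\textbf{Step 4 (the induced $2K_2$).} Consider the edges $ac$ and $a'b$. The non-edges among these four vertices are:
\begin{itemize}
\item $aa'$, since $A_1$ is independent;
\item $bc$, since $A_i$ is independent (and $b\neq c$ because $c\in N(a)$ while $b\notin N(a)$);
\item $ab$, by the choice of $b$;
\item $a'c$, by the choice of $c$.
\end{itemize}
So $\{a,c,a',b\}$ induces a $2K_2$, a contradiction.

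The only step needing care is Step 2: the vertex $a$ must be chosen extremally, by inclusion-maximal neighbourhood in $A_i$, so that the witness $c$ in Step 3 exists. Everything else is immediate from independence and maximality of $A_1$.
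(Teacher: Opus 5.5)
Your proof is correct: with $a\in A_1$ chosen so that $N(a)\cap A_i$ is inclusion-maximal, the non-neighbour $b\in A_i$ of $a$, the neighbour $a'\in A_1$ of $b$ (which exists because $A_1$ is maximal), and the witness $c\in (N(a)\cap A_i)\setminus N(a')$ together give the induced $2K_2$ formed by the edges $ac$ and $a'b$. The paper does not prove this lemma but imports it from Belavadi and Cameron; your extremal-vertex argument is the standard proof, and it can be rephrased as follows: $2K_2$-freeness forces the sets $N(a)\cap A_i$ for $a\in A_1$ to form a chain under inclusion, and maximality of $A_1$ forces the largest set in this chain to be all of $A_i$.
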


We now present the proof for \cref{theorem:main1}, which we restate here.

\begin{theorem*}[\ref{theorem:main1}]
Every $(2K_2,K_4,H_4)$-free graph that contains a $W_5$ is recolorable.
\end{theorem*}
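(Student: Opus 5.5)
The plan follows the outline of \cref{sec:RecoloringPrelims}. First reduce to the case with no comparable vertices, then describe $G$ precisely using \cref{lem:H3-free-structure,lem:H4-free-structure,lem:H4-W5-structure}, then fix a suitable $4$-coloring, and finally show that every $\ell$-coloring can be recolored so that one of its classes is monochromatic.

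\textbf{Step 1: reducing to no comparable vertices.} If $G$ has comparable vertices $x,y$ with $N(x)\subseteq N(y)$, we want to delete $x$ via \cref{lem: comparable} and induct. We must check that $G-x$ still contains a $W_5$, so that the class stays hereditary for this argument. If $x$ is not in the fixed induced $W_5$, there is nothing to check. If $x$ is the hub, then $y$ is adjacent to all five cycle vertices and nonadjacent to $x$. Hence $y$ replaces $x$ as the hub.

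If $x$ is a cycle vertex $i$, then $y$ is adjacent to $i-1$, $i+1$ and the hub. If $y$ were also adjacent to $i+2$, then $y$, the hub, $i+1$ and $i+2$ would form a $K_4$. The same holds for $i-2$. So $y$ replaces $i$ in the $W_5$. Thus we may assume $G$ has no comparable vertices, and that $\ell=5$ suffices: colorings with more colors are handled the same way, or via \cref{lem:only5} applied along the induction.

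\textbf{Step 2: structure.} Fix an induced $W_5$ with cycle $C=12345$ and hub $u$. By (\ref{item:3.1}) and (\ref{item:4.1}), and by (\ref{item:4.2}), we get
\[V(G)=V(C)\cup\{u\}\cup Z\cup Y_1\cup\dots\cup Y_5.\]
Here $N(u)=V(C)\cup Z$, since $u$ is anticomplete to the $Y_i$ by (\ref{item:1.2}). Each $Y_i$ is independent, complete to $Y_{i\pm1}$ and anticomplete to $Y_{i\pm2}$ by (\ref{item:1.4}) and (\ref{item:4.3}). Therefore $G[\bigcup Y_i]$ is a blow-up of $C_5$ by independent sets. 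The set $Z$ sees only $u$ and vertices of the $Y_i$.

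From this I would write down an explicit $4$-coloring with classes $I_1,\dots,I_4$. My first choice is to let $I_4$ be an inclusion-wise maximal independent set containing $u$, chosen inside $\{u\}\cup\bigcup_i Y_i$. Then $G-I_4$ should be $3$-colorable: the cycle vertices receive colors along $C$, the remaining $Y_j$ follow the color of their cycle neighbours, and $Z$ is placed in a class avoiding its $Y$-neighbours. This choice must be verified against the adjacencies between $Z$ and the $Y_i$, and adjusted if necessary.

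\textbf{Step 3: making $I_4$ monochromatic.} Given a $5$-coloring $\varphi'$, let $\alpha=\varphi'(u)$. We want to move every vertex of $I_4\setminus\{u\}$ to color $\alpha$ with legal steps $A\to\alpha$. A vertex $y\in I_4\setminus\{u\}$ is blocked exactly when some neighbour of $y$ carries color $\alpha$. Such neighbours lie outside $N[u]$ only among the $Y_j$, or in $C\cup Z$.

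The idea is to clear color $\alpha$ from $N(I_4\setminus\{u\})$ first. Each blocking vertex $w$ has degree structure controlled by Step 2, and with five colors and $K_4$-freeness $w$ always has a spare color, so $w$ can be recolored away from $\alpha$. Then the whole of $I_4$ moves to $\alpha$. \cref{lem: universal to independent} will be used to produce, for each other class, a vertex of $I_4$ complete to it. This restricts which colors the blockers can see and guarantees that the spare colour exists.

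Once $I_4$ is monochromatic, \cref{claim:CoverI} applies with $G'=G-V_\alpha$, which is $3$-colorable and hence recolorable by \cref{theo:3color}. Then \cref{lem: renaming} connects all canonical colorings, and the theorem follows.

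\textbf{Main obstacle.} I expect Step 3 to be the main obstacle. Recoloring a blocker $w$ away from $\alpha$ may require a color that is itself used on a neighbour of $w$, which creates a cascade through the $C_5$-blow-up on the $Y_i$ and through $Z$. My first attempt would be a case analysis on how many of the $Y_i$ are nonempty, using that $G$ is $K_4$-free so that $N(w)$ is triangle-free. If that fails, I would instead try to monochromatize a different class, for example the class containing the cycle vertex $1$ together with the vertices of $Z$.
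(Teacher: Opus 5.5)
\paragraph{Steps 1 and 2.} These are sound and match the paper. Your check that deleting a comparable vertex still leaves an induced $W_5$ is exactly what is needed, since the class is not hereditary. Your description $V(G)=V(C)\cup\{u\}\cup Z\cup\bigcup_i Y_i$, with the $Y_i$ forming a blow-up of $C_5$, is also correct.

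The same non-heredity means \cref{lem:only5} cannot be ``applied along the induction'': deleting a colour class may destroy every $W_5$. You must instead argue directly with five named colours for every $\ell\ge 5$, as the paper does.

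The $3$-colouring of $G-I_4$ is easier than you suggest. $Z$ is anticomplete to $C$, so it can simply join $\{1,3\}$. The paper first reduces to $Y_2,Y_5\neq\emptyset$, treating $W_5$, a single nonempty $Y_i$ (impossible) and the graph $S_1$ separately. It then uses $I_1=Z\cup\{1,3\}$, $I_2=Y_3\cup\{2,4\}$, $I_3=Y_1\cup Y_4\cup\{5\}$, $I_4=\{u\}\cup Y_2\cup Y_5$.

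\paragraph{The gap is Step 3.} This step is the whole content of the theorem, and your claim that every vertex blocking $I_4\to\alpha$ ``always has a spare colour'' is false.

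With $I_4=\{u\}\cup Y_2\cup Y_5$, consider a blocker $w\in Y_3$ coloured $\alpha$. It is adjacent to $1,3,5$ and to all of $Y_2\cup Y_4$. One can have $\varphi(3)=\beta$, $\varphi(1)=\gamma$, $\varphi(5)=\delta$ and a neighbour in $Y_2$ coloured $\varepsilon$; then for $\ell=5$ the vertex $w$ is frozen.

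A concrete instance: take $S_2$ and add a vertex $u$ complete to $C$. This graph is $(2K_2,K_4,H_4)$-free, contains a $W_5$, and has no comparable vertices. Colour it by $u,y_3\mapsto\alpha$, $3,y_4\mapsto\beta$, $1,4,y_5\mapsto\gamma$, $2,5,y_1\mapsto\delta$, $y_2\mapsto\varepsilon$. Then $y_3$ blocks $y_2\to\alpha$ and cannot be recoloured.

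Neither tool you invoke rescues this:
\begin{itemize}
\item $K_4$-freeness only makes $N(w)$ triangle-free, which does not bound the number of colours on it.
\item Lemma~\ref{lem: universal to independent} does not produce spare colours either. The vertex $x_2\in Y_2$ complete to $I_2\ni w$ only tells you there is no blocker in $I_2$ when $\varphi(x_2)=\alpha$. Otherwise $x_2$ is itself one of the vertices using up $w$'s palette, as in the example.
\end{itemize}

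\paragraph{What is missing.} You have to recolour neighbours of blockers first, and controlling these cascades is exactly what you leave open. The paper does it as follows.
\begin{itemize}
\item Take $x_1=u$, $x_2\in Y_2$, $x_3\in Y_5$, complete to $I_1,I_2,I_3$ respectively, and let $\alpha=\varphi(u)$.
\item By renaming, split into $\varphi(x_2,x_3)\in\{(\alpha,\alpha),(\alpha,\beta),(\beta,\alpha),(\beta,\beta),(\beta,\gamma)\}$.
\item Refine further by the colours of a few cycle vertices and by whether $Y_3$ or $Y_4$ contains an $\alpha$-vertex.
\item In each branch, run an explicit sequence built from two sweeping moves: $Z\to\varphi(i)$ when $u$ and $Y_{i\pm1}$ avoid $\varphi(i)$; and $Y_i\to\theta$ when some vertex of $Y_i$ already has colour $\theta$ and $Z$ avoids $\theta$.
\end{itemize}
Crucially, the paper does not insist on $I_4$: many branches end with $I_1$, $I_2$ or $I_3$ monochromatic instead. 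Your fallback ideas point in this direction, but without such an analysis the proposal does not prove the theorem.
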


\begin{proof}
Let $G$ be a $(2K_2,K_4,H_4)$-free graph containing a $5$-wheel with induced $5$-cycle $C=12345$ and center $u$. We use the notation developed in \cref{sec:THE-STRUCTURE}. By Lemma~\ref{lem: comparable}, we may assume that $G$ has no comparable vertices. Note that this is not immediate, since the class of $(2K_2,K_4,H_4)$-free graphs containing a $W_5$ is not hereditary. However, if $x,y\in V(G)$ with $xy\notin E(G)$ and $N(x)\subseteq N(y)$, then $G-x$ is certainly still $(2K_2,K_4,H_4)$-free, and moreover we claim that it still contains an induced $W_5$. Indeed, if $x\notin V(C)\cup \{u\}$, then this holds trivially. If $x\in V(C)\cup \{u\}$, then since $xy\notin E(G)$ and $N(x)\subseteq N(y)$, we have $y\notin V(C)\cup \{u\}$. Thus, by replacing $x$ with $y$, and noting that $G$ is $(2K_2,K_4)$-free, we still induce a $5$-wheel. Thus, we assume $G$ has no comparable vertices. By (\ref{item:3.1}), (\ref{item:4.1}), and (\ref{item:4.2}), we have $F_j=R_j=\emptyset$ for every $j\in[5]$, and $U=\{u\}$.

We would like to reduce to the case where there exists some $i\in[5]$ such that $Y_i$ and $Y_{i+2}$ are both nonempty. To justify this, we first consider what happens when no such pair exists. There are three possibilities: either every $Y_i$ is empty, exactly one of the sets $Y_i$ is nonempty, or the only nonempty $Y$-sets are two consecutive sets $Y_i$ and $Y_{i+1}$. We handle these three possibilities below. Afterward, by symmetry, we may assume that $Y_5$ and $Y_2$ are both nonempty.

If $Y_i=\emptyset$ for all $i\in[5]$, then $Z=\emptyset$. Indeed, if $z\in Z$, then $N(z)\subseteq U\subseteq N(1)$, so $z$ and $1$ are comparable, a contradiction. Hence $G=W_5$. This graph is recolorable by first recoloring the center $u$, if necessary, and then applying \cref{theo:3color} to the induced $5$-cycle. Next suppose that for some $i\in[5]$, the set $Y_i$ is nonempty and $Y_j=\emptyset$ for all $j\neq i$. Let $y\in Y_i$, and let $U={u}$. Since $U$ is anticomplete to $Y_i$ by (\ref{item:1.2}), the vertices $y$ and $u$ are nonadjacent. Thus, we have $N(y)\subseteq \{i,i+2,i-2\}\cup Z\subseteq N(u)$ contradicting that $G$ has no comparable vertices.

It remains to handle the case where the only nonempty $Y$-sets are two consecutive sets and by symmetry we suppose $Y_1\neq \emptyset$ and $Y_2\neq \emptyset$. We first claim that $Z=\emptyset$. Indeed, if $z\in Z$, then, since $Z$ is independent and anticomplete to $C$, we have $N(z)\subseteq U\cup Y_1\cup Y_2\subseteq N(4).$ Since $z$ is nonadjacent to $4$, this contradicts that $G$ has no comparable vertices. Thus $Z=\emptyset$. Due to the fact that $G$ contains no comparable vertices and $Y_i$ is independent by (\ref{item:1.1}), it is not hard to check that $|Y_1|=|Y_2|=1$ and by (\ref{item:1.3}) such vertices are adjacent. Then we define the following graph $S_1$. Let $S_1$ be the graph such that $V(S_1) = \{1,2,3,4,5,u, y_1,y_2\}$ where $\{1,2,3,4,5,u\}$ induces $5$-wheel, $y_1$ is adjacent to $\{4,1,3,y_2\}$ and $y_2$ is adjacent to $\{5,2,4,y_1\}$ (see left of \cref{fig:S2}). It is easy to verify that the following are four independent sets of $S_1$.
\[I_1 =  U \cup \{y_2\}, \qquad
        I_2 =  \{y_1,2\}, \qquad
        I_3 = \{1,4\}, \qquad
        I_4 =\{3,5\}.\]

\begin{claim}
    $S_1$ is recolorable.
\end{claim}

\begin{proofc}
    Given that $\ell\geq5$, we let $\{\alpha,\beta, \gamma,\delta,\varepsilon\}$ be five distinct colors. Without loss of generality, we may assume
$\varphi(u)=\alpha$, $\varphi(1)=\beta$, and $\varphi(y2)=\gamma$.

Note that if $\varphi(y_2)=\alpha$, then $I_1$ is monochromatic. Now, we suppose that some neighbor of $y_2$ is colored with $\alpha$, then the only possible option is $y_1$ since the vertices of the $W_5-u$ are not colored with $\alpha$. Note that if either $\varphi(4)=\beta$ or we can recolor $y_4$ with $\beta$, then $I_3$ is monochromatic. So, we suppose that $\varphi(4)\neq \beta$ and we cannot recolor $4$ with $\beta$. Then, some neighbor of $4$ has the color $\beta$. We have two options either $\varphi(y_2)=\beta$ or $\varphi(3)=\beta$. Suppose that $\varphi(y_2)=\beta$, then we can recolor $I_4$ with some color in the set $\{\delta,\varepsilon\}\setminus\varphi(4)$, and then $I_4$ is monochromatic. So, we may assume that $\varphi(3)=\beta$ and $\varphi(y_2)\neq\beta$. After relabeling, we can suppose that $\varphi(y_2)=\delta$, then there is not $\delta$ in $4$ and $5$, and the other vertices in $G-\{4,5,y_2\}$ are not colored with $\delta$, so we can recolor $U$ with $\delta$ and then $I_1$ is monochromatic. Therefore, in every case, we reach a monochromatic $I_i$ for some $i$, we apply Lemma \ref{claim:CoverI}.
\end{proofc}

We may now assume without loss of generality that $Y_2\neq \emptyset$ and $Y_5\neq \emptyset$. We define the following independent sets partitioning $V(G)$, which can be easily verified using (\ref{item:1.1})-(\ref{item:1.6}),(\ref{item:3.1}) and (\ref{item:4.1})-(\ref{item:4.3}).
        \[I_1 =  Z \cup \{1, 3\}, \qquad
        I_2 =  Y_3 \cup \{2,4\}, \qquad
        I_3 = Y_1 \cup Y_4\cup \{5\}, \qquad
        I_4 = U\cup Y_2 \cup Y_5.\]
    \begin{claim}
        The independence set $I_4$ is maximal.
    \end{claim}
    \begin{proofc}
        Clearly any $i\in V(C)$ has a neighbor in $I_4$ by definition of $U$. Similarly, $Z$ is complete to $U\neq \emptyset$ by (\ref{item:4.2}), so $Z$ has a neighbor in $I_4$. Finally, since $Y_2\neq\emptyset$ and $Y_5\neq\emptyset$, every vertex of $Y_1\cup Y_3$ has a neighbor in $Y_2$ by (\ref{item:1.4}), and every vertex of $Y_4$ has a neighbor in $Y_5$ by (\ref{item:1.4}). Thus every vertex in $V(G)\setminus I_4$ has a neighbor in $I_4$, and so $I_4$ is maximal.
        \end{proofc}

    \begin{claim}
    There exists vertices $x_1,x_2,x_3\in I_4$ such that $x_i$ is complete to $I_i$ for all $i\in [3]$ and moreover $x_1\in U$, $x_2\in Y_2$, and $x_3\in Y_5$.
    \end{claim}

\begin{proofc} Let $U = \{u\}$ by (\ref{item:4.2}) and since $Y_2\neq \emptyset$ and $Y_5\neq \emptyset$, let $y_2\in Y_2$ and $y_5\in Y_5$. We claim setting $x_1=u$, $x_2=y_2$, and $x_3=y_5$ we have $x_i$ complete to $I_i$. Indeed $u$ is complete to $\{1,3\}$ by definition and $Z$ by (\ref{item:4.2}). Next $y_2$ is complete to $\{2,4\}$ by definition and $Y_3$ by (\ref{item:1.4}). Finally $y_5$ is complete to $\{5\}$ by definition and $Y_1\cup Y_4$ by (\ref{item:1.4}).
\end{proofc}

We say a $4$-coloring of $G$ is \textit{canonical} if its colors classes are precisely $I_1,\dots , I_4$. To show that $G$ is recolorable, it suffices to show that for any $\ell$-coloring $\varphi$, with $\ell\geq 5$, we can recolor $\varphi$ to a canonical coloring. To do so, by Lemma \ref{claim:CoverI} it suffices to recolor $\varphi$ so that one of the $I_j$ is monochromatic. Since $\ell\geq 5$, we use $\{\alpha, \beta, \gamma, \delta, \varepsilon\}$ to denote $5$ distinct colors. Without loss of generality, assume $\varphi(x_1) = \alpha$. Up to renaming the remaining colors, we can assume that \[\varphi(x_2,x_3) \in \{(\alpha, \alpha), (\alpha, \beta), (\beta, \alpha), (\beta, \beta), (\beta, \gamma)\}.\]

If $\varphi(x_2,x_3) = (\alpha, \alpha)$, then since $x_i$ is complete to $I_i$, all vertices in $G-I_4$ are not colored with $\alpha$. Thus, we recolor $I_4$ with $\alpha$ and we are done by Lemma \ref{claim:CoverI}. We break the four remaining possibilities for $\varphi(x_2,x_3)$ into four separate claims. First we present two claims that we will use in all cases of $\varphi(x_2,x_3)$.

\begin{claim}\label{claim:Z-clearing}
Let $i\in[5]$, and suppose vertex $i$ is colored $\theta$. If no vertex of $U\cup Y_{i-1}\cup Y_{i+1}$ is colored $\theta$, then we can recolor $Z\to\theta$.
\end{claim}

\begin{proofc}
By definition, $Z$ is anticomplete to $V(C)$. Thus it suffices to check that no vertex in $U\cup \bigcup_{i=1}^5Y_i$ is colored $\theta$. Every vertex of $Y_i\cup Y_{i-2}\cup Y_{i+2}$ is adjacent to $i$ by definition, and since $\varphi(i)=\theta$, none of these vertices are colored $\theta$. By assumption, no vertex of $U\cup Y_{i-1}\cup Y_{i+1}$ has color $\theta$ and therefore $Z\to\theta$ is legal.
\end{proofc}

\begin{claim}\label{claim:Y-set-sweep}
Let $i\in[5]$ and let $\theta$ be a color. Suppose that $Y_i$ contains a vertex colored $\theta$ and that $Z$ contains no vertex colored $\theta$. Then we an recolor $Y_i\to\theta$.
\end{claim}

\begin{proofc}
Let $y\in Y_i$ be colored $\theta$. The set $Y_i$ is anticomplete to $U$ by (\ref{item:1.2}), to $Y_{i-2}\cup Y_{i+2}$ by (\ref{item:4.3}), and to $\{i-1,i+1\}$ by definition. On the other hand, $Y_i$ is complete to $\{i,i-2,i+2\}$ by definition and to $Y_{i-1}\cup Y_{i+1}$ by (\ref{item:1.4}). Since $y$ is colored $\theta$, none of the vertices in these complete sets is colored $\theta$. Finally, $Z$ contains no vertex colored $\theta$ by assumption. Therefore no neighbor of $Y_i$ is colored $\theta$, and so $Y_i\to\theta$ is legal.
\end{proofc}

\color{black}

\begin{claim}\label{claim:OPTION-1}
If $\varphi(x_2,x_3) = (\alpha, \beta)$, then we can recolor $\varphi$ so that one of the $I_j$ is monochromatic.
\end{claim}

\begin{proofc}
Since $2$ and $3$ are adjacent and each is complete to $U\cup Y_5$, by renaming colors, we can assume $\varphi(3)=\gamma$ and $\varphi(2)=\delta$. If $Y_4$ contains no vertex colored $\alpha$, then we can recolor $I_4\to \alpha$. Indeed, $Y_1\cup \{5\}$ contains no vertex colored $\alpha$ since such vertices are adjacent to $x_2$ by definition and (\ref{item:1.4}). Thus, we may assume $Y_4$ contains a vertex colored $\alpha$.

Since $I_1$ is complete to $x_1$, $I_2\cup Y_1\cup \{5\}$ are complete to $x_2$, and $Y_5$ is complete to $Y_4$ by (\ref{item:1.4}), the only sets containing vertices colored $\alpha$ are $U,Y_2$, and $Y_4$. Since these sets are independent by (\ref{item:1.2}) and (\ref{item:4.3}), we can recolor $U \cup Y_2 \cup Y_4 \to \alpha$. Next we recolor $Z\to\gamma$. This is legal by \cref{claim:Z-clearing} with $i=3$ and $\theta=\gamma$.

We now consider $\varphi(5)$. First suppose that $\varphi(5)=\gamma$. Then the remaining sets with unknown colors are $\{1,4\}\cup Y_1\cup Y_3\cup Y_5$. Since $Y_3$ is anticomplete to $Y_1\cup Y_5$ by (\ref{item:4.3}), and since $Y_3$ is anticomplete to $4$ by definition, we can recolor $Y_3\to\delta$. Indeed, the vertex $1$ is not colored $\delta$, since it is adjacent to $2$, and the sets $Y_2$ and $Y_4$ have already been recolored with $\alpha$. If $\varphi(4)=\delta$, then $I_2$ is monochromatic with color $\delta$, and we are done. Thus we may assume $\varphi(4)\neq\delta$. Since $Y_5$ is complete to $2$, no vertex of $Y_5$ has color $\delta$. Hence $Y_1\to\delta$ is legal, because $Y_1$ is anticomplete to $Y_3\cup Y_4$ by (\ref{item:4.3}), and has no neighbor colored $\delta$ in $\{1,3,4\}\cup Y_2\cup Y_5$. It remains to handle ${1,4}\cup Y_5$. Since $x_3\in Y_5$ is colored $\beta$ and $Z$ has color $\gamma$, \cref{claim:Y-set-sweep} gives $Y_5\to\beta$. Then $4\to\beta$ is legal, since $4$ is anticomplete to $Y_5\cup{1}$, and all other vertices colored $\beta$ have been cleared. After this step, no vertex in $I_2\cup I_3\cup I_4$ is colored $\varepsilon$, and so we can recolor $I_1\to\varepsilon$. Thus $I_1$ is monochromatic.

Now suppose $\varphi(5)\neq\gamma$. Since $x_3\in Y_5$ is colored $\beta$ and $Z$ has color $\gamma$, \cref{claim:Y-set-sweep} gives $Y_5\to\beta$. Now $\varphi(3)=\gamma$ and $\varphi(5)\neq\gamma$, so no vertex in $\{4,5\}\cup Y_1\cup Y_3$ is colored $\gamma$. Also, $U\cup Y_2\cup Y_4$ is colored $\alpha$, and $Z$ is anticomplete to $\{1\}$. Hence we can recolor $1\to\gamma$. Then $I_1$ is monochromatic with color $\gamma$. This completes the case and proves the desired claim.
\end{proofc}

\begin{claim}\label{claim:OPTION-2}
If $\varphi(x_2,x_3) = (\beta, \alpha)$, then we can recolor $\varphi$ so that one of the $I_j$ is monochromatic.
\end{claim}

\begin{proofc} Since $4$ and $5$ are adjacent to both $x_1$ and $x_2$, and are adjacent to each other, we may assume by renaming the colors that $\varphi(4)=\gamma$ and $\varphi(5) = \delta$. 

If we can recolor $x_2\to\alpha$, then we are in the case $\varphi(x_2,x_3)=(\alpha,\alpha)$, and we are done. Thus $x_2\to\alpha$ is blocked. Since $I_1\cup I_3$ contains no vertex colored $\alpha$ because $I_1$ is complete to $x_1$ and $I_3$ is complete to $x_3$, and since $2$ and $4$ are not colored $\alpha$ because both are adjacent to $x_1$, some vertex of $Y_3$ is colored $\alpha$. Also, $Z$ contains no vertex colored $\alpha$, since $Z$ is complete to $x_1$ by (\ref{item:4.2}). Hence, by \cref{claim:Y-set-sweep}, we recolor $Y_3\to\alpha$. Since $x_3\in Y_5$ is colored $\alpha$ and $Z$ contains no vertex colored $\alpha$, another application of \cref{claim:Y-set-sweep} gives $Y_5\to\alpha$. Finally, by \cref{claim:Z-clearing} with $i=4$ and $\theta=\gamma$, we recolor $Z\to\gamma$.

Since $2$ is adjacent to $x_1$ and $x_2$, up to relabeling colors, we may assume $\varphi(2)\in \{\gamma,\delta,\varepsilon\}$.

First suppose $\varphi(2) = \gamma$. We first recolor $Y_4\to \delta$ which is legal since $U\cup Y_3\cup Y_5 \cup Z \cup \{2,4\}$ have been recolored something distinct from $\delta$, $\{3,5\}\cup Y_1$ are anticomplete to $Y_4$ by (\ref{item:4.3}) and $\{1\}\cup Y_2$ contains no vertices colored $\delta$, since they are complete to $5$. If $\varphi(3)\neq \delta$, then by an analogous argument we could recolor $Y_1\to \delta$ and then $I_3$ would be monochromatic with color $\delta$. Thus, we may assume $\varphi(3) = \delta$.
Since $x_2\in Y_2$ is colored $\beta$ and $Z$ has color $\gamma$, by \cref{claim:Y-set-sweep} we recolor $Y_2\to\beta$.
Finally, note $\varphi(1)\notin \{\alpha,\gamma,\delta\}$ by $x_1, 2$, and $5$. If $\varphi(1)\neq\beta$, then, since $Y_1$ is complete to $1$, and every other set has been recolored to $\alpha,\beta,\gamma$, or $\delta$, we can recolor $I_1\to \varphi(1)$. If $\varphi(1)=\beta$, then every vertex in $I_1\cup I_2\cup I_4$ has been recolored to some color in $\{\alpha,\beta,\gamma,\delta\}$ and hence we can recolor $I_3\to \varepsilon$. In either case we arrive at some $I_j$ monochromatic.

Next suppose $\varphi(2) = \delta$. We first recolor $Y_1\to \delta$ which is possible since it is anticomplete to $\{2,5\}$ and every other set is either recolored to $\alpha,\beta$, or $\gamma$, or complete to $2$ or $5$ which are colored $\delta$. 
Next, since $x_2\in Y_2$ is colored $\beta$ and $Z$ has color $\gamma$, by \cref{claim:Y-set-sweep} we recolor $Y_2\to\beta$. We then recolor $3\to\beta$, which is legal since every possible $\beta$-neighbor of $3$ has either been recolored away from $\beta$ or lies in $Y_2$, which is anticomplete to $3$.
Next recolor $\{1\}\to \gamma$, which is possible since $U\cup Y_1\cup Y_2\cup Y_3\cup Y_5 \cup \{2,3,5\}$ got recolored to $\alpha,\beta$, or $\delta$, $\{4\}\cup Z$ is anticomplete to $\{1\}$ and $Y_4$ contains no vertex colored $\gamma$ as it is complete to $4$. Finally, since the vertices in $I_1\cup I_2\cup I_4$ have been recolored to some color in $\{\alpha, \beta, \gamma, \delta\}$, we can recolor $I_3\to \varepsilon$ as desired.

Finally, suppose $\varphi(2) = \varepsilon$. We first recolor $3\to \beta$. This is possible since $\{2,4,5\}\cup U \cup Y_3\cup Y_5$ are recolored distinct from $\beta$, $\{3\}\cup Y_2\cup Y_4$ are anticomplete to $3$, and $Y_1$ contains no vertex colored $\beta$ since it is complete to $x_2$ by (\ref{item:1.4}). Next we recolor $Y_1 \cup Y_4\to \delta$. This is legal as $\{1\}\cup Y_2$ are complete to $\{5\}$ so no vertices in such sets are colored $\delta$ and every other set is recolored to some color distinct from $\delta$. Thus, $I_3$ is monochromatic with color $\delta$.
\end{proofc}

\begin{claim}\label{claim:OPTION-3}
If $\varphi(x_2,x_3) = (\beta, \beta)$, then we can recolor $\varphi$ so that one of the $I_j$ is monochromatic.
\end{claim}

\begin{proofc}
Since $3$ and $4$ are adjacent, $3$ is complete to $\{x_1,x_3\}$, and $4$  is complete to $\{x_1,x_2\}$, without loss of generality by renaming colors we may assume $\varphi(3)=\gamma$ and $\varphi(4)=\delta$.

If we can recolor $x_2$ or $x_3$ to $\alpha$, then we can recolor some $I_j$ monochromatic by \cref{claim:OPTION-1} or \cref{claim:OPTION-2}. Thus, we may assume that $x_2$ and $x_3$ each have some neighbor say $v_2$ and $v_3$ respectively, colored $\alpha$. Since $\varphi(x_1)=\alpha$ and by (\ref{item:1.4}), $v_2\in Y_1\cup Y_3$ and $v_3\in Y_1\cup Y_4$ (note we may have $v_2 = v_3$). Moreover, the existence of even one vertex colored $\alpha$ in $Y_1\cup Y_3$ and $Y_1\cup Y_4$ implies no vertex of $Y_2\cup Y_5$ is colored $\alpha$. Since $Y_1$ is anticomplete to $U\cup Y_3\cup Y_4$ by (\ref{item:1.2}) and (\ref{item:4.3}), and $V(C)\cup Z$ contain no vertices colored $\alpha$ as they are complete to $x_1$ by definition and (\ref{item:4.2}), we can recolor $Y_1\to \alpha$.

Next we can recolor $\{1\}\to \beta$. Indeed $1$ is anticomplete to $Y_2\cup Y_5 \cup \{3,4\}\cup Z$ by definition, $U\cup Y_1$ are colored or have been recolored to $\alpha$, and since $\{2,5\}\cup Y_3\cup Y_4$ are complete to either $x_2$ or $x_3$, they contain no vertices colored $\beta$. We now consider the possibilities for $\varphi(2,5)$. Since $2$ is complete to $\{x_1,x_2,3\}$, $\varphi(2)\notin \{\alpha, \beta, \gamma\}$ and since $5$ is complete to $\{x_1, x_3, 4\}$, $\varphi(5)\notin \{\alpha, \beta, \delta\}$. Thus, by renaming colors if necessary, we have the following cases for $\varphi(2,5)$ (where $\tau$ is a sixth distinct color):
\[(\delta, \gamma), \quad (\delta, \varepsilon), \quad (\varepsilon, \gamma), \quad (\varepsilon, \varepsilon), \quad (\varepsilon, \tau).\]

First suppose $\varphi(2,5)=(\delta,\gamma)$ and suppose there exists a vertex in $Y_3$ colored $\alpha$. Since $Z$ contains no vertex colored $\alpha$, by \cref{claim:Y-set-sweep} we recolor $Y_3\to\alpha$. Then, by \cref{claim:Z-clearing} with $i=2$ and $\theta=\delta$, we recolor $Z\to\delta$. Since $x_2,x_3\in Y_2\cup Y_5$ are colored $\beta$ and $Z$ has color $\delta$, by \cref{claim:Y-set-sweep} we can recolor $Y_2\to\beta,$ and $Y_5\to\beta.$ Now we recolor $Y_4\to\gamma$, which is legal since $Y_4$ is anticomplete to $\{3,5\}$, and every other possible neighbor colored $\gamma$ has been recolored. Thus no vertex of $I_2\cup I_3\cup I_4$ is colored $\varepsilon$, so we can recolor $I_1\to\varepsilon$.
If $Y_3$ contains no vertex colored $\alpha$, then we can recolor $Y_4\to\alpha$. Indeed, $Y_4$ is anticomplete to $U\cup Y_1\cup Y_2$ by (\ref{item:1.2}) and (\ref{item:4.3}), while $C\cup Z\cup Y_3\cup Y_5$ contains no vertex colored $\alpha$. Next, by \cref{claim:Z-clearing} with $i=3$ and $\theta=\gamma$, we recolor $Z\to\gamma$. Finally, we recolor $Y_3\to\delta$, yielding $I_2$ monochromatic with color $\delta$.

Next suppose $\varphi(2,5)=(\delta,\varepsilon)$ and suppose there exists a vertex in $Y_3$ colored $\alpha$. Since $Z$ contains no vertex colored $\alpha$, by \cref{claim:Y-set-sweep} we recolor $Y_3\to\alpha$. Then, by \cref{claim:Z-clearing} with $i=2$ and $\theta=\delta$, we recolor $Z\to\delta$. Since every set outside of $I_3$ has been recolored except for $Y_2\cup Y_5$, both of which are complete to $5$, no vertex outside $I_3$ is colored $\varepsilon$. Hence we can recolor $I_3\to\varepsilon$. If $Y_3$ contains no vertex colored $\alpha$, then we can recolor $Y_4\to\alpha$. Now, by \cref{claim:Z-clearing} with $i=5$ and $\theta=\varepsilon$, we recolor $Z\to\varepsilon$. Finally, we recolor $Y_3\to\delta$, yielding $I_2$ monochromatic with color $\delta$.

Next suppose $\varphi(2,5)\in\{(\varepsilon,\gamma),(\varepsilon,\varepsilon),(\varepsilon,\tau)\}$ and suppose there exists a vertex in $Y_3$ colored $\alpha$. Since $Z$ contains no vertex colored $\alpha$, by \cref{claim:Y-set-sweep} we recolor $Y_3\to\alpha$. Then, by \cref{claim:Z-clearing} with $i=2$ and $\theta=\varepsilon$, we recolor $Z\to\varepsilon$. Next recolor $1\to\delta$. Now no vertex outside $I_4$ is colored $\beta$: the vertex $1$ has been recolored to $\delta$, the sets $Y_3$ and $Z$ have been recolored to $\alpha$ and $\varepsilon$, respectively, and $Y_4$ contains no vertex colored $\beta$ since it is complete to $x_3$. Hence we can recolor $I_4\to\beta$.

If $Y_3$ contains no vertex colored $\alpha$, we can recolor $Y_4\to \alpha$. Indeed, $Y_4$ is anticomplete to $U\cup Y_1$ by (\ref{item:1.2}) and (\ref{item:4.3}), and $V(C)\cup Y_2\cup Y_3\cup Y_5\cup Z$ contain no vertices colored $\alpha$ by definition or because they are complete to either $U$ or $Y_1$. Next, by \cref{claim:Z-clearing} with $i=5$ and $\theta=\varphi(5)$, we recolor $Z\to\varphi(5)$. Since $x_2,x_3\in Y_2\cup Y_5$ are colored $\beta$ and $Z$ has color $\varphi(5)\neq\beta$, two applications of \cref{claim:Y-set-sweep} give $Y_2\to\beta$, and $Y_5\to\beta$. If $\varphi(5)\neq \varepsilon$ then no vertices in $V(G)\setminus I_2$ are colored $\varepsilon$ and we can recolor $I_2\to \varepsilon$ as desired. If $\varphi(5)=\varepsilon$, then no vertex of $V(G)\setminus I_1$ is colored $\gamma$: the sets $Y_2\cup Y_5$ have been recolored with $\beta$, the set $Y_4$ has been recolored with $\alpha$, the set $Z$ has been recolored with $\varepsilon$, and $Y_3$ contains no vertex colored $\gamma$ since it is complete to $3$. Hence we can recolor $I_1\to\gamma$.

In any case, we can recolor some $I_j$ monochromatic which completes the claim. 
\end{proofc}

\begin{claim}\label{claim:OPTION-4}
If $\varphi(x_2,x_3) = (\beta,\gamma)$, then we can recolor $\varphi$ so that one of the $I_j$ is monochromatic.
\end{claim}

\begin{proofc}
Since $5$ is adjacent to $x_1,x_2$ and $x_3$, we have $\varphi(5)\notin\{\alpha,\beta,\gamma\}$. Thus, by renaming colors if necessary, we may assume that $\varphi(5)=\delta$.

Next we note that no vertex in $Y_2\cup Y_5$ is colored with $\alpha$. Indeed, if some vertex of $Y_2$ were colored $\alpha$, then we could use this vertex as $x_2$ and apply \cref{claim:OPTION-1}; similarly, if some vertex of $Y_5$ were colored $\alpha$, then we could use this vertex as $x_3$ and apply \cref{claim:OPTION-2}. Thus no vertex in $I_1\cup V(C)\cup Y_2\cup Y_5$ is colored $\alpha$. Since $Y_1$ is anticomplete to $U\cup Y_3\cup Y_4$ by (\ref{item:1.2}) and (\ref{item:4.3}), we can recolor $Y_1\to\alpha$.

First suppose that there exists a vertex in $Y_4$ colored $\alpha$. Then we can recolor $Y_4\to\alpha$. Indeed, $Y_4$ is anticomplete to $U\cup Y_1$ by (\ref{item:1.2}) and (\ref{item:4.3}), the set $V(C)\cup Y_2\cup Y_5\cup Z$ contains no vertex colored $\alpha$, and $Y_3$ contains no vertex colored $\alpha$ since it is complete to $Y_4$ by (\ref{item:1.4}). Next, by \cref{claim:Z-clearing} with $i=5$ and $\theta=\delta$, we recolor $Z\to\delta$. Since $x_2\in Y_2$ is colored $\beta$, $x_3\in Y_5$ is colored $\gamma$, and $Z$ has color $\delta$, two applications of \cref{claim:Y-set-sweep} give $Y_2\to\beta$ and $Y_5\to\gamma$.

If we can recolor $U\to\beta$, then after renaming colors we are in \cref{claim:OPTION-1}. Thus we may assume that $U\to\beta$ is not legal. Since $I_2$ is complete to $x_2$ and contains no vertex colored $\beta$, and every other set except possibly $\{1,3\}$ has been recolored to a color distinct from $\beta$, we may assume that either $1$ or $3$ is colored $\beta$. If both $1$ and $3$ are colored $\beta$, then no vertex of $V(G)\setminus I_2$ is colored $\varepsilon$, and so we can recolor $I_2\to\varepsilon$. Thus we may assume that $\varphi(1,3)\in\{(\beta,\varepsilon),(\varepsilon,\beta)\}$.

If $\varphi(1,3)=(\beta,\varepsilon)$, then we recolor $2\to\delta$. This is legal since $2$ is anticomplete to ${4,5}\cup Y_3\cup Z$, and every other possible neighbor of $2$ has a color distinct from $\delta$. Now ${4}\cup Y_3$ is complete to $3$, and every other set except ${3}$ has been recolored to a color distinct from $\varepsilon$. Hence we can recolor $I_1\to\varepsilon$.

If $\varphi(1,3)=(\varepsilon,\beta)$, then we recolor $1\to\beta$. Indeed, since $3$ is colored $\beta$, the set ${2,4}\cup Y_3$ contains no vertex colored $\beta$. Also, $1$ is anticomplete to $Y_2$, and every other possible neighbor of $1$ has a color distinct from $\beta$. Hence $1\to\beta$ is legal. Now no vertex of $V(G)\setminus I_2$ is colored $\varepsilon$, and so we can recolor $I_2\to\varepsilon$.

Finally, suppose that no vertex in $Y_4$ is colored $\alpha$. We can recolor $Y_3\to\alpha$. Indeed, $Y_3$ is anticomplete to $U\cup Y_1\cup Y_5$ by (\ref{item:1.2}) and (\ref{item:4.3}), and $V(C)\cup Y_2\cup Y_4\cup Y_5\cup Z$ contains no vertex colored $\alpha$.

Since $2$ is adjacent to $x_1,x_2$ and $x_3$, we have $\varphi(2)\notin\{\alpha,\beta,\gamma\}$. Since $\varphi(5)=\delta$, either $\varphi(2)=\delta$, or by renaming colors if necessary, $\varphi(2)=\varepsilon$.

First suppose $\varphi(2)=\delta$. By \cref{claim:Z-clearing} with $i=2$ and $\theta=\delta$, we recolor $Z\to\delta$. Since $x_2\in Y_2$ is colored $\beta$, $x_3\in Y_5$ is colored $\gamma$, and $Z$ has color $\delta$, two applications of \cref{claim:Y-set-sweep} give $Y_2\to\beta$ and $Y_5\to\gamma$. Next we recolor $\{1,4\}\to\gamma$. This is legal since $\{1,4\}$ is anticomplete to $Y_5$, and $\{3\}\cup Y_4$ contains no vertex colored $\gamma$ since it is complete to $x_3$. Then we recolor $3\to\beta$, which is legal since $3$ is anticomplete to $Y_2\cup Y_4$, and every other possible neighbor of $3$ has a color distinct from $\beta$. Finally, we recolor $Y_4\to\beta$, which is legal since $Y_4$ is anticomplete to $\{3\} \cup Y_2$, and every other possible neighbor of $Y_4$ has a color distinct from $\beta$. Therefore no vertex of $V(G)\setminus I_4$ is colored $\varepsilon$, and so we can recolor $I_4\to\varepsilon$.

Now suppose $\varphi(2)=\varepsilon$. By \cref{claim:Z-clearing} with $i=2$ and $\theta=\varepsilon$, we recolor $Z\to\varepsilon$. Next we recolor $4\to\varepsilon$. This is legal since $4$ is anticomplete to $Z$, while $\{1,3\}\cup Y_2\cup Y_4\cup Y_5$ is complete to $2$, and $\{5\}\cup U\cup Y_1\cup Y_3$ has been recolored with colors distinct from $\varepsilon$. Then two applications of \cref{claim:Y-set-sweep} give $Y_2\to\beta$ and $Y_5\to\gamma$. Next recolor $3\to\beta$, which is legal since every possible neighbor of $3$ other than vertices in $\{1\}\cup Y_2\cup Y_4$ has a color distinct from $\beta$, and $3$ is anticomplete to $\{1\}\cup Y_2\cup Y_4$. Finally, we recolor $Y_4\to\delta$ and then $Y_1\to\delta$. The first step is legal since $Y_4$ is anticomplete to $5$, and every other possible neighbor of $Y_4$ has a color distinct from $\delta$. The second step is legal since $Y_1$ is anticomplete to $\{5\} \cup Y_4$, and every other possible neighbor of $Y_1$ has a color distinct from $\delta$. Hence $I_3$ is monochromatic with color $\delta$. This completes the claim.
\end{proofc}

By \cref{claim:OPTION-1}-\cref{claim:OPTION-4}, for all cases we can recolor $\varphi$ so that one of the $I_j$ is monochromatic and apply Lemma \ref{claim:CoverI}. This completes the proof. \end{proof}

\section{Proof of \cref{theorem:main2}}\label{sec:SECOND-THEOREM}

Before proving \cref{theorem:main2}, we prove that two specific graphs are recolorable. These will be graphs that we reduce to in the proof of \cref{theorem:main2}. First, let $S_2$ be the graph with vertex set $V(C)\cup \{y_i:i\in[5]\}$, where $C=12345$ is a $5$-cycle. The edges of $S_2$ are as follows, with indices taken modulo $5$. Each vertex $y_i$ is adjacent to $i$, $i-2$, and $i+2$, and $y_iy_{i+1}\in E(S_2)$ for every $i\in[5]$. See \cref{fig:S2} for an illustration of $S_2$. 

Next, let $S_3$ be the graph with vertex set $V(C)\cup \bigcup_{i=1}^5 Y_i$, where $C=12345$ is a $5$-cycle and $Y_i=\{y_i^+,y_i^-\}$ for each $i\in[5]$. The edges of $S_3$ are as follows, with indices taken modulo $5$. Each vertex of $Y_i$ is adjacent to $i$, $i-2$, and $i+2$; the set $Y_i$ is complete to $Y_{i-1}\cup Y_{i+1}$, and $y_i^+y_{i+2}^-$ for each $i\in[5]$. See \cref{fig:S3} for an illustration of $S_3$.

\begin{figure}[h!]
    \centering
    \begin{tikzpicture}[
    line cap=round,
    line join=round,
    vertex/.style={circle, fill=black, inner sep=2.2pt},
    every path/.style={draw, line width=1pt}
]

\def\r{1.8}
\def\R{3.5}

\coordinate (1) at (90:\r);
\coordinate (2) at (18:\r);
\coordinate (3) at (-54:\r);
\coordinate (4) at (-126:\r);
\coordinate (5) at (162:\r);

\coordinate (y1) at (90:\R);
\coordinate (y2) at (18:\R);
\coordinate (y3) at (-54:\R);
\coordinate (y4) at (-126:\R);
\coordinate (y5) at (162:\R);

\draw (1) -- (2);
\draw (2) -- (3);
\draw (3) -- (4);
\draw (4) -- (5);
\draw (5) -- (1);

\draw (y3) -- (y4);



\draw (y3) to (3);
\draw[bend left = 10] (y3) to (5);
\draw[bend left = -10] (y3) to (1);

\draw (y4) to (4);
\draw[bend left = 10] (y4) to (1);
\draw[bend left = -10] (y4) to (2);


\draw (0,0) to (1);
\draw (0,0) to (2);
\draw (0,0) to (3);
\draw (0,0) to (4);
\draw (0,0) to (5);

\node[vertex] at (1) {};
\node[vertex] at (2) {};
\node[vertex] at (3) {};
\node[vertex] at (4) {};
\node[vertex] at (5) {};

\node[vertex] at (y3) {};
\node[vertex] at (y4) {};

\node[vertex] at (0,0) {};


\end{tikzpicture}
    \hspace{2cm}\begin{tikzpicture}[
    line cap=round,
    line join=round,
    vertex/.style={circle, fill=black, inner sep=2.2pt},
    every path/.style={draw, line width=1pt}
]

\def\r{1.8}
\def\R{3.5}

\coordinate (1) at (90:\r);
\coordinate (2) at (18:\r);
\coordinate (3) at (-54:\r);
\coordinate (4) at (-126:\r);
\coordinate (5) at (162:\r);

\coordinate (y1) at (90:\R);
\coordinate (y2) at (18:\R);
\coordinate (y3) at (-54:\R);
\coordinate (y4) at (-126:\R);
\coordinate (y5) at (162:\R);

\draw (1) -- (2);
\draw (2) -- (3);
\draw (3) -- (4);
\draw (4) -- (5);
\draw (5) -- (1);

\draw (y1) -- (y2);
\draw (y2) -- (y3);
\draw (y3) -- (y4);
\draw (y4) -- (y5);
\draw (y5) -- (y1);

\draw (y1) to (1);
\draw[bend left = 10] (y1) to (3);
\draw[bend left = -10] (y1) to (4);

\draw (y2) to (2);
\draw[bend left = 10] (y2) to (4);
\draw[bend left = -10] (y2) to (5);

\draw (y3) to (3);
\draw[bend left = 10] (y3) to (5);
\draw[bend left = -10] (y3) to (1);

\draw (y4) to (4);
\draw[bend left = 10] (y4) to (1);
\draw[bend left = -10] (y4) to (2);

\draw (y5) to (5);
\draw[bend left = 10] (y5) to (2);
\draw[bend left = -10] (y5) to (3);

\node[vertex] at (1) {};
\node[vertex] at (2) {};
\node[vertex] at (3) {};
\node[vertex] at (4) {};
\node[vertex] at (5) {};

\node[vertex] at (y1) {};
\node[vertex] at (y2) {};
\node[vertex] at (y3) {};
\node[vertex] at (y4) {};
\node[vertex] at (y5) {};


\end{tikzpicture}
    \caption{The graphs $S_1$ (left) and $S_2$ (right)}
    \label{fig:S2}
\end{figure}

\begin{lemma}\label{lemma:S2-recolorable}
$S_2$ is recolorable. 
\end{lemma}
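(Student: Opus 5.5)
The plan is to use the general template of \cref{sec:RecoloringPrelims}, but to pick the canonical partition so that one color class is a single vertex. Then the ``make some $I_j$ monochromatic'' step is automatic.

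\textbf{Step 1: basic facts about $S_2$.} I will first check that $S_2$ is $2K_2$-free. Each of $i$ and $y_i$ has degree $5$, and the rotation $i\mapsto i+1$, $y_i\mapsto y_{i+1}$ is an automorphism. So up to rotation there are only three edge types: $i(i+1)$, $iy_i$ (equivalently $iy_{i\pm2}$), and $y_iy_{i+1}$. For each type I will check that every edge of $S_2$ has an endpoint adjacent to an endpoint of the chosen edge, or shares one. This case check is the main (though routine) obstacle.

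Next I will check $\chi(S_2)=4$. Consider the sets $\{i,y_{i+1},y_{i-1}\}$, and note that no independent set has size $4$; this is a small check using the degree-$5$ structure. Hence $\chi(S_2)\ge \lceil 10/3\rceil=4$. Equality then follows from the explicit partition below.

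\textbf{Step 2: a canonical partition with a singleton class.} From the adjacencies one verifies that
\[
I_1=\{1,y_2,y_5\},\qquad I_2=\{2,4,y_3\},\qquad I_3=\{3,5,y_4\},\qquad I_4=\{y_1\}
\]
are independent sets partitioning $V(S_2)$. For example, $y_3$ is adjacent only to $3,5,1,y_2,y_4$, so it is nonadjacent to $2$ and $4$; similarly $y_4$ is adjacent only to $4,1,2,y_3,y_5$. Let $\psi$ be a $4$-coloring with these color classes.

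\textbf{Step 3: reduction via Lemma~\ref{claim:CoverI}.} Let $\ell\ge 5$ and let $\varphi$ be any $\ell$-coloring of $S_2$. Put $\alpha=\varphi(y_1)$. Then $I_4=\{y_1\}\subseteq V_\alpha$ holds trivially, with no recoloring needed.

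The graph $G'=S_2-V_\alpha$ is an induced subgraph of $S_2-y_1$. Hence $G'$ is $2K_2$-free by Step 1, and it is $3$-colorable using the restrictions of $I_1,I_2,I_3$. By \cref{theo:3color}, $G'$ is recolorable. Lemma~\ref{claim:CoverI} therefore gives $\varphi\xleftrightarrow{\;\mathcal{R}_\ell\;}\psi$, where $\psi$ is a canonical coloring with the colors of $\psi$ relabeled as in that lemma.

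\textbf{Step 4: connecting canonical colorings.} Any two canonical colorings induce the same partition into color classes. So by the Renaming Lemma (Lemma~\ref{lem: renaming}), with $\ell\ge 5=4+1$, they lie in the same component of $\mathcal{R}_\ell(S_2)$. Consequently $\mathcal{R}_\ell(S_2)$ is connected for every $\ell\ge 5=\chi(S_2)+1$, so $S_2$ is recolorable.
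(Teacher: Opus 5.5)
Your proof is correct, and it takes a genuinely different and much shorter route than the paper. The paper uses the partition $\{2,y_1\},\{5,y_4\},\{1,3,y_2\},\{4,y_3,y_5\}$, which has no singleton class. It then fixes the colors of $1,2,y_4$, branches on $y_3$, and uses further case distinctions on $y_1,y_2,4$ to write explicit recoloring moves that make one class monochromatic; only then does it invoke Lemma~\ref{claim:CoverI}.

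You instead observe that $S_2-y_1$ is already $3$-colorable. Your classes $\{1,y_2,y_5\},\{2,4,y_3\},\{3,5,y_4\}$ are indeed independent. So the singleton class $\{y_1\}$ is monochromatic in every coloring, and Lemma~\ref{claim:CoverI} applies with no recoloring at all. Combined with the fact that $S_2$ has independence number $3$, this is all that is needed. For the independence number, each of the two induced $5$-cycles contributes at most two vertices, and a nonadjacent pair $i,i+2$ has $y_{i+1}$ as its only common non-neighbor among the $y$'s.

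Both arguments rest on the same black boxes: Theorem~\ref{theo:3color}, Lemma~\ref{claim:CoverI}, and the Renaming Lemma. The paper's version fits its uniform template and produces hand-built moves. Yours removes the case analysis entirely and proves a cleaner general principle: if $\chi(G-v)<\chi(G)$ for some vertex $v$ and every induced subgraph of $G-v$ is recolorable, then $G$ is recolorable.

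One correction to Step~1: the edges $iy_i$ and $iy_{i\pm2}$ are \emph{not} in the same automorphism orbit. The edge $1y_1$ lies in no triangle, whereas $1y_3$ lies in two (common neighbors $5$ and $y_4$). So your $2K_2$-check must treat these edge types separately. It still succeeds; for instance, the vertices outside $N[1]\cup N[y_3]$ are $4$ and $y_5$, which are nonadjacent. The paper simply asserts that $S_2$ is $2K_2$-free, so carrying out this check explicitly is worthwhile.
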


\begin{proof}
It is easy to verify that the following four independent sets partition $V(S_2)$:
\[
I_1=\{2,y_1\},\qquad
I_2=\{5,y_4\},\qquad
I_3=\{1,3,y_2\},\qquad
I_4=\{4,y_3,y_5\}.
\]
We say that a $4$-coloring of $S_2$ is canonical if its color classes are precisely $I_1,I_2,I_3,I_4$, up to renaming colors. To show that $S_2$ is recolorable, we will show that for every $\ell\geq 5$ and every $\ell$-coloring $\varphi$ of $S_3$, we can recolor $\varphi$ to a canonical coloring. Then, by the Renaming Lemma, $\mathcal{R}_{\ell}(S_2)$ is connected. 

By Lemma \ref{claim:CoverI}, it is enough to recolor $\varphi$ so that one of the sets $I_j$ is monochromatic. Indeed, if $I_j$ is monochromatic, say with color $\alpha$, then deleting the color class $I_\alpha$ leaves an induced subgraph of $S_2-I_j$, which is $3$-colorable; since this subgraph is also $2K_2$-free, it is recolorable by \cref{theo:3color}. 

Given that $\ell\geq5$, we let $\{\alpha,\beta, \gamma,\delta,\varepsilon\}$ be five distinct colors. Without loss of generality, we may assume
$\varphi(1)=\alpha$, $\varphi(2)=\beta$, and $\varphi(y_4)=\gamma$. Since $y_3$ is adjacent to both $1$ and $y_4$, but not to $2$, we may also assume by renaming $\varphi(y_3)\in\{\beta,\delta\}$.

First suppose $\varphi(y_3)=\beta$. If $\varphi(y_2)\neq\gamma$, then $5\to\gamma$ is legal, and $I_2$ is monochromatic with color $\gamma$. Thus we may assume $\varphi(y_2)=\gamma$. Since $5$ is adjacent to vertices of colors $\alpha,\beta,\gamma$, we may assume, after relabeling the remaining colors, that $\varphi(5)=\delta$. If $\varphi(4)\neq\beta$, then $y_1\to\beta$ is legal, and $I_1$ is monochromatic with color $\beta$. Hence we may assume $\varphi(4)=\beta$. But then $y_4\to\delta$ is legal: the vertex $y_4$ is not adjacent to $5$, and its only possible $\delta$-neighbor among the outer vertices would be $y_5$, which cannot have color $\delta$ because $y_5$ is adjacent to $5$. Therefore $I_2$ is monochromatic with color $\delta$. \

Now suppose $\varphi(y_3)=\delta$. If $\varphi(y_1)\neq\delta$, then $y_5\to\delta$ and then $4\to\delta$ are legal, making $I_4$ monochromatic with color $\delta$. Thus we may assume $\varphi(y_1)=\delta$. Next, if $\varphi(y_2)\neq\gamma$, then $5\to\gamma$ is legal, and $I_2$ is monochromatic with color $\gamma$. Hence we may assume $\varphi(y_2)=\gamma$. But now $2\to\delta$ is legal, and so $I_1$ is monochromatic with color $\delta$.

In every case, we can recolor to make one of the sets $I_1,I_2,I_3,I_4$ monochromatic. Therefore, by Lemma \ref{claim:CoverI}, the coloring can be recolored to a canonical coloring. Hence $S_2$ is recolorable. 
\end{proof}

\begin{figure}[h!]
    \centering
    \begin{tikzpicture}[
    line cap=round,
    line join=round,
    cycleedge/.style={draw, line width=1.3pt},
    mainedge/.style={draw, line width=1.7pt},
    chordedge/.style={draw, line width=1.25pt},
    blob/.style={draw=black!50, fill=gray!10, line width=1pt},
    cyclev/.style={
        circle,
        fill=black,
        draw=black,
        minimum size=18pt,
        inner sep=0pt,
        text=white,
        font=\scriptsize\bfseries
    },
    yv/.style={
        circle,
        fill=black,
        draw=black,
        minimum size=15pt,
        inner sep=0pt,
        text=white,
        font=\scriptsize\bfseries
    }
]

\def\r{2.15}
\def\R{5.00}
\def\d{0.42}
\def\A{7.25}

\foreach \i/\ang in {
    1/90,
    2/18,
    3/-54,
    4/-126,
    5/162
}{
    \coordinate (c\i) at (\ang:\r);             
    \coordinate (b\i) at (\ang:\R);             
    \coordinate (p\i) at ($(b\i)+(\ang+90:\d)$); 
    \coordinate (m\i) at ($(b\i)+(\ang-90:\d)$); 
}

\coordinate (a13) at (18:\A);
\coordinate (a24) at (-54:\A);
\coordinate (a35) at (-126:\A);
\coordinate (a41) at (162:\A);
\coordinate (a52) at (90:\A);

\draw[cycleedge] (c1) -- (c2);
\draw[cycleedge] (c2) -- (c3);
\draw[cycleedge] (c3) -- (c4);
\draw[cycleedge] (c4) -- (c5);
\draw[cycleedge] (c5) -- (c1);

\draw[mainedge] (b1) .. controls (72:5.55)   and (36:5.55)   .. (b2);
\draw[mainedge] (b2) .. controls (0:5.55)    and (-36:5.55)  .. (b3);
\draw[mainedge] (b3) .. controls (-72:5.55)  and (-108:5.55) .. (b4);
\draw[mainedge] (b4) .. controls (-144:5.55) and (180:5.55)  .. (b5);
\draw[mainedge] (b5) .. controls (144:5.55)  and (108:5.55)  .. (b1);

\draw[mainedge] (b1) -- (c1);
\draw[mainedge] (b1) to[bend left=8]  (c3);
\draw[mainedge] (b1) to[bend right=8] (c4);

\draw[mainedge] (b2) -- (c2);
\draw[mainedge] (b2) to[bend left=8]  (c4);
\draw[mainedge] (b2) to[bend right=8] (c5);

\draw[mainedge] (b3) -- (c3);
\draw[mainedge] (b3) to[bend left=8]  (c5);
\draw[mainedge] (b3) to[bend right=8] (c1);

\draw[mainedge] (b4) -- (c4);
\draw[mainedge] (b4) to[bend left=8]  (c1);
\draw[mainedge] (b4) to[bend right=8] (c2);

\draw[mainedge] (b5) -- (c5);
\draw[mainedge] (b5) to[bend left=8]  (c2);
\draw[mainedge] (b5) to[bend right=8] (c3);

\foreach \i/\ang in {
    1/90,
    2/18,
    3/-54,
    4/-126,
    5/162
}{
    \begin{scope}[rotate around={\ang+90:(b\i)}]
        \filldraw[blob] (b\i) ellipse[x radius=1.08cm, y radius=0.70cm];
    \end{scope}
}

\def\A{7.55}

\coordinate (a13) at (18:\A);
\coordinate (a24) at (-54:\A);
\coordinate (a35) at (-126:\A);
\coordinate (a41) at (162:\A);
\coordinate (a52) at (90:\A);

\draw[chordedge]
    (p1) to[out=45,in=112,looseness=.95] (a13)
         to[out=-68,in=-45,looseness=.95] (m3);

\draw[chordedge]
    (p2) to[out=-27,in=40,looseness=.95] (a24)
         to[out=-140,in=-117,looseness=.95] (m4);

\draw[chordedge]
    (p3) to[out=-99,in=-32,looseness=.95] (a35)
         to[out=148,in=171,looseness=.95] (m5);

\draw[chordedge]
    (p4) to[out=-171,in=-104,looseness=.95] (a41)
         to[out=76,in=99,looseness=.95] (m1);

\draw[chordedge]
    (p5) to[out=117,in=-176,looseness=.95] (a52)
         to[out=4,in=27,looseness=.95] (m2);
\node[cyclev] at (c1) {1};
\node[cyclev] at (c2) {2};
\node[cyclev] at (c3) {3};
\node[cyclev] at (c4) {4};
\node[cyclev] at (c5) {5};

\node[yv] at (p1) {$+$};
\node[yv] at (m1) {$-$};

\node[yv] at (p2) {$+$};
\node[yv] at (m2) {$-$};

\node[yv] at (p3) {$+$};
\node[yv] at (m3) {$-$};

\node[yv] at (p4) {$+$};
\node[yv] at (m4) {$-$};

\node[yv] at (p5) {$+$};
\node[yv] at (m5) {$-$};


\end{tikzpicture}
    \caption{The graph $S_3$}
    \label{fig:S3}
\end{figure}

\begin{lemma}\label{lemma:S3-recolorable}
$S_3$ is recolorable. 
\end{lemma}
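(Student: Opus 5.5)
We follow the same template as Lemma~\ref{lemma:S2-recolorable}. First we fix a partition of $V(S_3)$ into four independent sets; then we show that every $\ell$-coloring ($\ell\ge 5$) can be recolored so that one of these sets is monochromatic. Lemma~\ref{claim:CoverI} then handles the rest: $S_3$ minus a color class containing that set is $3$-colorable and $2K_2$-free, hence recolorable by \cref{theo:3color}. Finally, the Renaming Lemma (Lemma~\ref{lem: renaming}) connects all canonical colorings.

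\textbf{The partition.} Since $Y_i$ is complete to $Y_{i\pm1}$, each $Y_i$ must be grouped with cycle vertices not adjacent to it, namely $i-1$ and $i+1$. We also need $y_i^+y_{i+2}^-$ to stay split. A natural candidate, mimicking the $S_2$ partition, is
\[
I_1=\{2\}\cup Y_1,\quad I_2=\{5\}\cup Y_4,\quad I_3=\{1,3\}\cup\{y_2^+,y_2^-\},\quad I_4=\{4\}\cup Y_3\cup Y_5 .
\]
This fails in $I_4$, because of the edge $y_3^+y_5^-$. So the sets must be rebalanced, for instance
\[
I_1=\{2\}\cup Y_1,\quad I_2=\{5\}\cup Y_4,\quad I_3=\{1,3\}\cup Y_2\cup\{y_5^+\},\quad I_4=\{4\}\cup Y_3\cup\{y_5^-\}.
\]
This needs checking: $y_5^+\sim y_2^-$ is an edge, so this choice also fails, and $y_5^+$ may instead need to go with $\{4\}\cup Y_3$ if $y_5^+\not\sim y_3^-$ (only $y_3^+y_5^-$ and $y_5^+y_2^-$ involve $Y_5$). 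The first step is therefore a careful verification of one valid partition, choosing it to be as symmetric as possible. It is also possible that $S_3$ has comparable vertices, so that Lemma~\ref{lem: comparable} reduces it to $S_2$-like graphs; this should be checked first, since it would give a shortcut.

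\textbf{Forcing a monochromatic class.} Fix $\varphi$ and, without loss of generality, normalize the colors of a triangle, say $\varphi(1)=\alpha$, $\varphi(2)=\beta$, and the color of one outer vertex such as $y_4^+$ (which is adjacent to $1$ and $2$) to be $\gamma$. Then case on the colors of a few key vertices (the partner $y_4^-$, then $y_3^\pm$), exactly as in the $S_2$ proof. In each case we attempt a short sequence of moves of the form $A\to\theta$: typically recolor the single cycle vertex of some $I_j$ to match the color already used by its outer vertices, or recolor an outer pair $Y_i$ onto the color of the cycle vertex of its class. Each failure of such a move pins down one more vertex color, and each pinned color enables a move elsewhere. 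With five colors, every blocked move fixes a color, and after a bounded number of steps some class has all its blocking colors identified, so it can be swept to a free color.

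\textbf{Main obstacle.} The main difficulty is the case analysis itself. Because each $Y_i$ now has two vertices that may carry different colors, there are many more subcases than for $S_2$, and the extra chords $y_i^+y_{i+2}^-$ block moves that were legal in $S_2$. I would handle this by first trying to make the two vertices of some $Y_i$ agree, recoloring $y_i^-$ to $\varphi(y_i^+)$ when only the chord neighbor $y_{i-2}^+$ blocks. This aims at the intermediate goal that some $Y_i$ is monochromatic, after which the argument collapses to essentially the $S_2$ analysis. I expect the hardest situation to be the one where every such agreement move is blocked by the chords, with the blockers arranged cyclically. I would resolve it by exploiting the fifth color $\varepsilon$: a vertex whose neighborhood misses $\varepsilon$ is moved to $\varepsilon$ to break the cycle of blockers.
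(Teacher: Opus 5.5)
There is a genuine gap: your proposal stops where the proof of this lemma actually begins. The surrounding reduction is exactly the paper's template: make some $I_j$ monochromatic, apply Lemma~\ref{claim:CoverI} together with \cref{theo:3color}, then use the Renaming Lemma.

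You never fix a valid partition. Both of your candidates fail; in the second, $y_5^+$ is adjacent to $3$ as well as to $y_2^-$.

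More importantly, the step ``every blocked move fixes a color, and after a bounded number of steps some class has all its blocking colors identified, so it can be swept to a free color'' is not an argument. A class $I_j$ can be swept to $\theta$ only when $\theta$ is absent from $N(I_j)$. Removing $\theta$ from $N(I_j)$ requires further recolorings that may themselves be blocked. You give neither a progress measure nor any use of the specific adjacencies of $S_3$. A structure-free version of this heuristic is simply false: in $C_6$ with $3=\chi+1$ colors, the coloring $1,2,3,1,2,3$ is frozen, so neither bipartition class can ever become monochromatic.

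Your fallbacks do not help either. $S_3$ has no comparable vertices: for example, $N(y_i^+)$ and $N(y_i^-)$ differ in $y_{i+2}^-$ versus $y_{i-2}^+$, and the remaining nonadjacent pairs are checked similarly. Making some $Y_i$ monochromatic also does not reduce to the $S_2$ analysis. That pair is still adjacent to $y_{i+2}^-$ and $y_{i-2}^+$, and the other four pairs still carry their chords.

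The paper closes this gap with a concrete partition and a domination pivot that keeps the case analysis short. It takes $I_1=\{1,y_2^+,y_5^-\}$, $I_2=\{2,5\}\cup Y_1$, $I_3=\{3,y_2^-\}\cup Y_4$, $I_4=\{4,y_5^+\}\cup Y_3$; your observation that $y_5^+$ can join $\{4\}\cup Y_3$ points this way. The key fact is that $\{2,5\}$ dominates $V(S_3)\setminus I_2$.

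With your normalization $\varphi(1)=\alpha$, $\varphi(2)=\beta$, $\varphi(y_4^+)=\gamma$, the argument runs as follows. If $\varphi(5)=\beta$, or $5\to\beta$ is legal, then $I_2\to\beta$ immediately. Otherwise some vertex of $\{4,y_3^+,y_3^-\}$, the only neighbors of $5$ not adjacent to $2$, has color $\beta$. This leaves three subcases:
\begin{enumerate}
\item $\varphi(4)=\beta$: here $\{2,4\}$ dominates everything outside $\{2,4\}\cup Y_3$, so $Y_3\to\beta$ is free.
\item $\varphi(4)\neq\beta=\varphi(y_3^-)$: this splits according to whether $\varphi(4)=\alpha$.
\item $\beta$ appears only on $y_3^+$.
\end{enumerate}
Each subcase is finished by an explicit sequence of a few moves ending with some $I_j$ monochromatic. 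That pivot and those explicit sequences are the real content of the lemma, and your proposal does not yet supply them.
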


\begin{proof}
It is easy to verify that the following four independent sets partition $V(S_3)$:
\[I_1 =  \{1, y_2^+, y_5^-\}, \qquad
I_2 =  \{2,5,y_1^+, y_1^-\}, \qquad
I_3 = \{3, y_4^+, y_4^-, y_2^-\}, \qquad
I_4 =  \{4, y_3^+, y_3^-, y_5^+\}.\]

We say that a $4$-coloring of $S_3$ is canonical if its color classes are precisely $I_1,I_2,I_3,I_4$, up to renaming colors. To show that $S_3$ is recolorable, we will show that for every $\ell\geq 5$ and every $\ell$-coloring $\varphi$ of $S_3$, we can recolor $\varphi$ to a canonical coloring. Then, by the Renaming Lemma, $\mathcal{R}_{\ell}(S_3)$ is connected.

By Lemma \ref{claim:CoverI}, it is enough to recolor $\varphi$ so that one of the sets $I_j$ is monochromatic. Indeed, if $I_j$ is monochromatic with color $\alpha$, then deleting the color class $I_\alpha$ leaves an induced subgraph of $S_3-I_j$, which is $3$-colorable; since this subgraph is also $2K_2$-free, it is recolorable by \cref{theo:3color}. 

Since $\ell\geq 5$, we let $\{\alpha,\beta,\gamma, \delta, \varepsilon\}$ be $5$ distinct colors. By renaming colors if necessary, we may assume that $\varphi(1)=\alpha$, $\varphi(2)=\beta$, and $\varphi(y_4^+)=\gamma$.

First suppose $\varphi(5)=\beta$. Since $\{2,5\}$ is a dominating set, no other vertex is colored $\beta$. Hence we can recolor every vertex of $I_2$ with $\beta$, making $I_2$ monochromatic. Thus, we may assume that $\varphi(5)\neq\beta$. Moreover, if we could recolor $5$ with $\beta$, then we would reduce to the previous case. Hence some neighbor of $5$ is colored $\beta$. Since every vertex in $Y_2\cup Y_5$ is adjacent to $2$, no vertex in $Y_2\cup Y_5$ is colored $\beta$. Therefore at least one vertex in $\{4,y_3^+,y_3^-\}$ is colored $\beta$.

First suppose $\varphi(4)=\beta$. We first recolor both vertices in $\{y_3^+,y_3^-\}$ to $\beta$. This is possible since $\{2,4,y_3^+,y_3^-\}$ is an independent set, and every other vertex of $S_3$ is adjacent to either $2$ or $4$, and hence is not colored $\beta$. Since $y_1^-$ is adjacent to $1$, $4$, and $y_4^+$, we may assume, after possibly renaming, that $\varphi(y_1^-)=\delta$.

We now recolor $5$ and $y_1^+$ to $\delta$. Indeed, every neighbor of $5$ that is not already colored $\alpha$ or $\beta$ lies in $Y_2\cup Y_5$, and every vertex in $Y_2\cup Y_5$ is adjacent to $y_1^-$, so none of these vertices is colored $\delta$. Similarly, every neighbor of $y_1^+$ is either one of $1,3,4$, a vertex in $Y_2\cup Y_5$, or $y_3^-$; none of these vertices is colored $\delta$. Next, we recolor $y_2^+$ to $\alpha$. This is legal since $y_2^+$ is adjacent to $2$, $4$, $5$, every vertex in $Y_1\cup Y_3$, and $y_4^-$, none of which is colored $\alpha$. Finally, we recolor all vertices of $I_3$ to $\gamma$. This is legal because $I_3$ is independent, $y_4^+$ already has color $\gamma$, and every possible neighbor outside $I_3$ is adjacent to one of the vertices already colored $\gamma$ or has already been recolored away from $\gamma$. Thus $I_3$ is monochromatic.

Next suppose $\varphi(4)\neq \beta$ and $\varphi(y_3^-)=\beta$. We first recolor $y_3^+$ to $\beta$. This is legal because $y_3^+$ and $y_3^-$ are nonadjacent, and every neighbor of $y_3^+$ outside $\{4,y_3^-\}$ is adjacent to $2$, and hence is not colored $\beta$; moreover we recall that $\varphi(5) \neq \beta$. We now consider two cases according to $\varphi(4)$.

\textbf{Case 1: $\varphi(4)=\alpha$}. We first recolor $y_5^+$ and $y_5^-$ to $\alpha$. This is legal because each neighbor of $Y_5$ is adjacent to either $1$ or $4$, and hence no neighbor of $Y_5$ is colored $\alpha$. We then recolor $y_1^-$ to $\beta$, which is legal because every possible neighbor of $y_1^-$ colored $\beta$, lies in $Y_2\cup Y_5$, and vertices in $Y_2$ are adjacent to $2$ while vertices in $Y_5$ have just been recolored to $\alpha$. Finally, we recolor $5$ to $\varphi(y_1^+)$. This is legal because every neighbor of $5$ is adjacent to $y_1^+$, except possibly $y_3^+$, and $y_3^+$ is colored $\beta$, while $\varphi(y_1^+)\neq \beta$. Since $y_1^+$ is adjacent to $1$ and to $y_3^-$, we have $\varphi(y_1^+)\notin\{\alpha,\beta\}$. Since $\gamma$ is the only remaining fixed color at this point, up to relabeling, we may assume that $\varphi(y_1^+)\in\{\gamma,\delta\}$.

Suppose first that $\varphi(y_1^+)=\gamma$, so $5$ has also been recolored to $\gamma$. Since $3$ is adjacent to $2$, $4$, and $y_1^+$, we have $\varphi(3)\notin\{\alpha,\beta,\gamma\}$, and hence, up to relabeling, we may assume $\varphi(3)=\delta$. Now recolor $y_4^+$ and $y_2^-$ to $\delta$. These steps are legal since every neighbor of $y_4^+$ or $y_2^-$ that could have color $\delta$ is adjacent to either $3$ or $y_1^+$, both of which are colored $\delta$. If $y_4^-$ can be recolored to $\delta$, then $I_3$ is monochromatic with color $\delta$. Otherwise, the only possible neighbor of $y_4^-$ colored $\delta$, is $y_2^+$, so $\varphi(y_2^+)=\delta$. In this case, no vertex outside $I_3$ is colored $\varepsilon$, and so we can recolor all of $I_3$ to $\varepsilon$.

Now suppose $\varphi(y_1^+)=\delta$, so $5$ has also been recolored to $\delta$. We recolor $y_1^-$ to $\delta$, which is legal since every neighbor of $y_1^-$ that could have color $\delta$ is adjacent to either $5$ or $y_1^+$. If $2$ can be recolored to $\delta$, then $I_2$ is monochromatic with color $\delta$. Otherwise, the only possible $\delta$-colored neighbor of $2$ is $y_4^-$, so $\varphi(y_4^-)=\delta$. Then we recolor $y_2^+$ to $\gamma$, which is legal since all possible vertices colored $\gamma$ are nonneighbors of $y_2^+$. Finally, no vertex outside $I_3$ is colored $\varepsilon$, so we recolor all of $I_3$ to $\varepsilon$.

\textbf{Case 2: $\varphi(4)\neq\alpha$}. Since $\varphi(4)\neq\beta$ and $4$ is adjacent to $y_4^+$, after possibly relabeling, we may assume that $\varphi(4)=\delta$. We first recolor $y_1^-$ to $\beta$, which is legal since every possible neighbor of $y_1^-$ colored $\beta$, lies in $Y_2\cup Y_5\cup \{3\}$, and these vertices are adjacent to $2$. Next, recolor $y_3^-$ and $y_5^+$ to $\delta$. These recolorings are legal because the only neighbors of these vertices not adjacent to $4$ are already colored $\alpha$ or $\beta$. If $y_3^+$ can be recolored to $\delta$, then $I_4$ is monochromatic with color $\delta$. Thus, we may assume that $y_3^+$ cannot be recolored to $\delta$. Since the only possible $\delta$-colored neighbor of $y_3^+$ is $y_5^-$, we have $\varphi(y_5^-)=\delta$. Now recolor $y_2^+$ to $\alpha$, then recolor $3$ and $y_2^-$ to $\alpha$, and finally recolor $5$ to $\gamma$. These steps are legal since, after the previous recolorings, the neighbors of $y_2^+$, $3$, and $y_2^-$ avoid $\alpha$, while the neighbors of $5$ avoid $\gamma$. At this point, every vertex has color in $\{\alpha,\beta,\gamma,\delta\}$ except possibly $y_1^+$ and $y_4^-$. If $y_4^-$ is not colored $\varepsilon$, then we recolor all of $I_2$ to $\varepsilon$. If $y_4^-$ is colored $\varepsilon$ and $y_1^+$ is not, then we recolor all of $I_3$ to $\varepsilon$. Finally, if both $y_1^+$ and $y_4^-$ are colored $\varepsilon$, then first recolor $y_4^-$ to $\gamma$, which is legal since $y_4^+$ is not adjacent to $y_4^-$, and then recolor all of $I_2$ to $\varepsilon$. In every case, one of the sets $I_j$ becomes monochromatic.

Finally suppose $\varphi(4),\varphi(y_3^-)\neq\beta$ and $\varphi(y_3^+)=\beta$. We first recolor both $y_1^+$ and $y_1^-$ to $\beta$. This is legal since every neighbor of these vertices is either adjacent to $2$, and hence is not colored $\beta$, or lies in $\{4,y_3^-,y_4^+\}$, none of which is colored $\beta$. Since $y_3^-$ is adjacent to $1$ and $y_4^+$, and is not colored $\beta$, we may assume, after possibly relabeling, that $\varphi(y_3^-)=\delta$. We recolor $y_5^+$ to $\delta$, then recolor $3$ to $\gamma$, and then recolor $y_5^-$ to $\alpha$. These steps are legal because the possible obstructions to these recolorings are adjacent to $y_3^-$, $y_4^+$, and $1$, respectively. If $4$ is not colored $\alpha$, then we recolor $y_2^+$ to $\alpha$, making $I_1$ monochromatic. Otherwise, $4$ is colored $\alpha$, and we recolor all of $I_4$ to $\delta$, which is legal since all the vertices not in $I_4$ contain no vertices colored $\delta$. In either case, some $I_j$ becomes monochromatic.

In every case, we can recolor to make one of the sets $I_1,I_2,I_3,I_4$ monochromatic. Therefore, by Lemma \ref{claim:CoverI}, the coloring can be recolored to a canonical coloring. Hence $S_3$ is recolorable. 
\end{proof}

We now prove \cref{theorem:main2} and recall the theorem for convenience.

\begin{theorem*}[\ref{theorem:main2}]
Let $H_a,H_b$ be distinct graphs in $\{H_2,H_3,H_4\}$. Then every $(2K_2,K_4,H_a,H_b)$-free graph is recolorable.
\end{theorem*}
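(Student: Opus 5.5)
The plan is to follow the template of \cref{sec:RecoloringPrelims} and split into the three pairs $\{a,b\}=\{2,3\},\{2,4\},\{3,4\}$. First, by \cref{lem: comparable} we may assume $G$ has no comparable vertices; since the class is hereditary this reduction is immediate. If $G$ is $C_5$-free we are done by \cref{theorem:main3}. So we may assume $G$ contains an induced $C=12345$, and we use the partition of $V(G)$ from \cref{sec:THE-STRUCTURE}. In each case at least two of the families $\{R_i\}$, $\{Y_i\}$, $\{F_i\}$ vanish identically.

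\textbf{Case $\{2,3\}$.} Here $R_i=Y_i=\emptyset$, and by (\ref{item:2.4}) also $Z=\emptyset$. So $V(G)=V(C)\cup U\cup\bigcup F_i$. By (\ref{item:1.3}) the nonempty $F_i$'s have consecutive indices, so at most two of them are nonempty. I expect comparability to force most of these sets to be small. Two vertices of $U$ have identical neighbourhoods, since $U$ is anticomplete to the $F_i$; hence $|U|\le1$, and similarly for vertices within one $F_i$. This leaves a bounded list of small graphs (subgraphs of $W_5$ plus at most two $F$-vertices). For each, I would exhibit a 4-colouring $I_1,\dots,I_4$ and make one class monochromatic by a short case analysis, then apply \cref{claim:CoverI}, \cref{theo:3color} and \cref{lem: renaming}. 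If instead $\chi(G)\le 3$, we finish directly by \cref{theo:3color}.

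\textbf{Case $\{2,4\}$.} Here $R_i=F_i=\emptyset$. If $U\ne\emptyset$ then $G$ contains $W_5$ and \cref{theorem:main1} applies. So assume $U=\emptyset$, giving $V(G)=V(C)\cup Z\cup\bigcup Y_i$. Each $z\in Z$ has a neighbour in $Y_i$ outside $N(i)$ by non-comparability, and $2K_2$-freeness should force heavy restrictions on $Z$. I would aim to show that the graph reduces, via the "no comparable vertices" assumption and the fact that vertices of $Y_i$ with the same $Y$-neighbourhood are comparable, to an induced subgraph of $S_3$. The motivation is that (\ref{item:1.4}) allows each $y\in Y_i$ to see at most one of $Y_{i\pm2}$, so each $Y_i$ splits into at most two types $y_i^{+},y_i^{-}$. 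The relevant targets are $S_2$ when all $Y_i$ are singletons, and $S_3$ in general. Proper induced subgraphs of these are either 3-colourable or handled by the same explicit colourings. We then conclude by \cref{lemma:S2-recolorable} and \cref{lemma:S3-recolorable}.

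\textbf{Case $\{3,4\}$.} Here $Y_i=F_i=\emptyset$, so $V(G)=V(C)\cup U\cup Z\cup\bigcup R_i$. By (\ref{item:3.2}) $U$ is complete to all $R_i$, by (\ref{item:2.3}) $R_i$ is anticomplete to $R_{i\pm2}$, by (\ref{item:1.5}) $R_i$ is complete to $R_{i+1}$, and $Z$ is empty by (\ref{item:2.4}). Then $N(r)\subseteq N(i)$ for $r\in R_i$ forces $R_i=\emptyset$, so $G$ is $C_5$ plus $U$ with $|U|\le1$, i.e. $C_5$ or $W_5$; both are recolorable as already observed.

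The main obstacle is the $\{2,4\}$ case. Pinning down $Z$ and proving that the $Y$-structure is exactly (an induced subgraph of) $S_3$, up to comparable vertices, requires careful $2K_2$ arguments. I would first try to prove $Z=\emptyset$ using $2K_2$'s of the form $zy$ together with an edge of $C$ missed by both.
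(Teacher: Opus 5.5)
\textbf{Genuine gap: $Z=\emptyset$ in the case $\{a,b\}=\{2,4\}$.} The route you propose for this step cannot work. A vertex $y\in Y_j$ misses exactly $j-1$ and $j+1$ on $C$, and these two are nonadjacent. So there is never an edge of $C$ missed by both $z$ and a neighbour $y$ of $z$: the $2K_2$ you intend to use does not exist.

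The real obstruction is a wheel. Since $N(z)\subseteq\bigcup_j Y_j$ and $Y_i\cup Y_{i\pm2}\subseteq N(i)$, non-comparability of $z$ with each $i\in V(C)$ forces a neighbour of $z$ in $Y_{i-1}\cup Y_{i+1}$. Hence the indices $j$ with $N(z)\cap Y_j\neq\emptyset$ include three cyclically consecutive ones, say $1,2,3$, with neighbours $y_1,y_2,y_3$. By (\ref{item:1.4}) $y_1y_2,y_2y_3\in E(G)$, and by $K_4$-freeness $y_1y_3\notin E(G)$. Then $4\,y_1\,z\,y_3\,5$ is an induced $C_5$ to which $y_2$ is complete, i.e.\ an induced $W_5$ whose centre is not in $U$. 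In fact the nine vertices $V(C)\cup\{y_1,y_2,y_3,z\}$ induce a $(2K_2,K_4)$-free graph, so no $2K_2$ is available here. Consequently your reduction ``$U\neq\emptyset$ lets \cref{theorem:main1} apply, so assume $U=\emptyset$'' is too weak. You need \cref{theorem:main1} to assume $G$ has no induced $W_5$ anywhere, and then this wheel is the contradiction.

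\textbf{Second gap: the structure of $\bigcup_i Y_i$ is only announced.} Your ``at most two types'' claim needs the nested-neighbourhood argument. Two vertices $y,y'\in Y_i$ that are both anticomplete to $Y_{i-2}$ can differ only in $Y_{i+2}$. If neither neighbourhood contained the other, private neighbours $x,x'\in Y_{i+2}$ would make $\{xy,x'y'\}$ a $2K_2$; so they are comparable.

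You also assert that proper induced subgraphs of $S_3$ are handled by ``the same explicit colourings'', but nothing in \cref{lemma:S2-recolorable} or \cref{lemma:S3-recolorable} covers them. Instead, use non-comparability with cycle vertices to pin $G$ down exactly:
\begin{itemize}
\item If some $y_i^+$ exists, then $Y_{i+2}=\{y_{i+2}^-\}$ would give $N(i-1)\subseteq N(y_i^+)$, so $y_{i+2}^+$ exists. Propagating around the cycle yields $Y_j=\{y_j^+,y_j^-\}$ for all $j$, i.e.\ $G\cong S_3$.
\item If no such vertex exists, a nonempty $Y_i=\{y_i^0\}$ forces $Y_{i\pm2}\neq\emptyset$, so $G\cong C_5$ or $G\cong S_2$.
\end{itemize}

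\textbf{The other two cases.} Your case $\{3,4\}$ is correct, and you settle it without even needing \cref{theorem:main1}. Your case $\{2,3\}$ is fine, but no case analysis is needed there. For $f\in F_i$ and $u\in U$ we have $N(f)\subseteq N(u)$, so $U\neq\emptyset$ forces every $F_i=\emptyset$. A lone nonempty $F_i=\{f\}$ is comparable with $i$. This leaves only $C_5$, $W_5$, and the $3$-degenerate graph on $V(C)\cup\{f_1,f_2\}$, which \cref{thm:degeneracy} handles.
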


\begin{proof}
Let $G$ be a $(2K_2,K_4,H_a,H_b)$-free graph, where $a\neq b$. By \cref{lem: comparable}, we may assume that $G$ has no comparable vertices. Moreover, if $G$ is $C_5$-free, then by \cref{theorem:main2}, we know that $G$ is recolorable. Thus, we may assume that $G$ contains an induced $5$-cycle. Let $C=12345$ be an induced $5$-cycle in $G$. We write \[
V(G)=V(C)\cup U\cup Z\cup \bigcup_{i=1}^5(R_i\cup Y_i\cup F_i)
\]
using the sets defined in \cref{sec:THE-STRUCTURE}. We now consider the possible choices of $\{a,b\}$.

First suppose $\{a,b\}=\{3,4\}$, that is, $G$ is $(2K_2,K_4,H_3,H_4)$-free. If $G$ contains a $W_5$, then $G$ is recolorable by \cref{theorem:main1}. Thus, we may assume that $G$ contains no $W_5$, and so $U=\emptyset$. We have $Y_i\cup F_i=\emptyset$ for all $i\in[5]$ by (\ref{item:2.1}) and (\ref{item:3.1}), and $Z=\emptyset$ by (\ref{item:2.4}). Thus,
\[
V(G)=V(C)\cup \bigcup_{i=1}^5 R_i.
\]

We now claim that $R_i=\emptyset$. To see this, suppose $r\in R_i$. By (\ref{item:1.5}) and (\ref{item:2.3}), $R_i$ is complete to $R_{i-1}\cup R_{i+1}$ and anticomplete to $R_{i-2}\cup R_{i+2}$. Since $R_i$ is independent, it follows that
\[
N(r)=\{i-1,i+1\}\cup R_{i-1}\cup R_{i+1}\subseteq N(i),
\]
contradicting that $G$ has no comparable vertices. Thus, $R_i=\emptyset$ for every $i\in[5]$. Hence $G\cong C_5$, which is $3$-colorable and therefore recolorable by \cref{theo:3color}.

Next suppose $\{a,b\}=\{2,4\}$, that is, $G$ is $(2K_2,K_4,H_2,H_4)$-free. If $G$ contains a $W_5$, then $G$ is recolorable by \cref{theorem:main1}. Thus, we may assume that $G$ contains no $W_5$, and so $U=\emptyset$. Since $G$ is $H_2$-free, $R_i=\emptyset$ for all $i\in[5]$, and since $G$ is $H_4$-free, $F_i=\emptyset$ for all $i\in[5]$. Thus,
\[
V(G)=V(C)\cup Z\cup \bigcup_{i=1}^5 Y_i.
\]

\begin{claim}
$Z=\emptyset$.
\end{claim}

\begin{proofc}
Suppose, for a contradiction, that $z\in Z$. Let
\[
\operatorname{supp}(z)=\{i: N(z)\cap Y_i\neq\emptyset\}.\]

Since we may assume $G$ is connected, $\operatorname{supp}(z)\neq\emptyset$. By symmetry, we may assume that $1\in \operatorname{supp}(z)$. Since $G$ has no comparable vertices, there must exist some $v\in N(z)\setminus N(1)$. In other words, $\{2,5\}\cap \operatorname{supp}(z)\neq\emptyset$. By symmetry, we may assume that $2\in \operatorname{supp}(z)$. Again, since $G$ has no comparable vertices, there must exist some $u\in N(z)\setminus N(4)$. In other words, $\{3,5\}\cap \operatorname{supp}(z)\neq\emptyset$. By symmetry, we may assume that $3\in \operatorname{supp}(z)$.

Let $y_1,y_2,y_3\in N(z)$ with $y_i\in Y_i$. By (\ref{item:1.4}), $y_1y_2,y_2y_3\in E(G)$. Moreover, $y_1y_3\notin E(G)$, otherwise ${y_1,y_2,y_3,z}$ induces a $K_4$. But then $4,y_1,z,y_3,5$ induce a $5$-cycle, and $y_2$ is adjacent to every vertex of this cycle. Thus these vertices together with $y_2$ induce a $W_5$, a contradiction.
\end{proofc}

By the previous claim, $V(G)=V(C)\cup \bigcup_{i=1}^5 Y_i$. By (\ref{item:1.4}), each vertex in $Y_i$ is anticomplete to at least one of $Y_{i-2}$ and $Y_{i+2}$. We claim that $|Y_i|\leq 2$ for all $i\in[5]$. Indeed, suppose $|Y_i|\geq 3$. Then, by the pigeonhole principle, without loss of generality there exist distinct vertices $y,y'\in Y_i$ that are both anticomplete to $Y_{i-2}$. Since $y$ and $y'$ are not comparable, there exist vertices $x\in N(y)\setminus N(y')$ and $x'\in N(y')\setminus N(y)$. Since $y$ and $y'$ have the same neighbors on $C$, and since $Y_i$ is complete to $Y_{i-1}\cup Y_{i+1}$ by (\ref{item:1.4}), while both $y$ and $y'$ are anticomplete to $Y_{i-2}$, we must have $x,x'\in Y_{i+2}$. But then $xy$ and $x'y'$ induce a $2K_2$, a contradiction.

\begin{claim}
$G\in\{C_5, S_2,S_3\}$.
\end{claim}

\begin{proofc}
For each vertex $y\in Y_i$, by (\ref{item:1.4}), $y$ is anticomplete to at least one of $Y_{i-2}$ and $Y_{i+2}$. Thus, every vertex of $Y_i$ has one of the following three types. We write $y_i^+$ for a vertex that is anticomplete to $Y_{i-2}$ and has a neighbor in $Y_{i+2}$, $y_i^-$ for a vertex that is anticomplete to $Y_{i+2}$ and has a neighbor in $Y_{i-2}$, and $y_i^0$ for a vertex that is anticomplete to both $Y_{i-2}$ and $Y_{i+2}$. Since $y_i^0$ is comparable with $y_i^+$ and $y_i^-$, then we have $|Y_i|\leq 2$ for every $i\in[5]$.

We first note that if $|Y_i|=2$, then $Y_i=\{y_i^+,y_i^-\}$ since $y_i^0$ is comparable with $y_i^+$ and $y_i^-$, as is two vertices of type $y_i^+$ or $y_i^-$. Hence, for every $i\in[5]$,
\[Y_i\in\{\emptyset,\{y_i^0\},\{y_i^+\},\{y_i^-\},\{y_i^+,y_i^-\}\}.\] 
We note that by definition, if $y_i^+$ exists, then $y_{i+2}^-$ exists, and if $y_i^-$ exists, then $y_{i-2}^+$ exists.

First suppose no $y^+$ vertex exists. Then no $y^-$ vertex exists either, and so every nonempty $Y_i$ consists of a single vertex $y_i^0$. We claim that either all of the sets $Y_i$ are empty, or all of them are nonempty. Indeed, suppose $Y_i=\{y_i^0\}$. If $Y_{i-2}=\emptyset$, then $N(i+1)\subseteq N(y_i^0)$, contradicting that $G$ has no comparable vertices. Similarly, if $Y_{i+2}=\emptyset$, then $N(i-1)\subseteq N(y_i^0)$, again a contradiction. Therefore, whenever $Y_i$ is nonempty, both $Y_{i-2}$ and $Y_{i+2}$ are nonempty. Since the indices are taken modulo $5$, this forces either $Y_i=\emptyset$ for every $i\in[5]$, or $Y_i={y_i^0}$ for every $i\in[5]$. In the first case $G\cong C_5$ and is $3$-colorable, and in the second case $G\cong S_2$.

Next suppose there exists some $i\in[5]$ such that $y_i^+\in Y_i$. Then $y_{i+2}^-\in Y_{i+2}$. Suppose $Y_{i+2}=\{y_{i+2}^-\}$. Then $N(i-1)\subseteq N(y_i^+)$, contradicting that $G$ has no comparable vertices. Thus, $Y_{i+2}=\{y_{i+2}^+,y_{i+2}^-\}$. Then $y_{i-1}^-\in Y_{i-1}$. This propagation continues. If $Y_{i-1}=\{y_{i-1}^-\}$, then $N(i+1)\subseteq N(y_{i+2}^+)$, a contradiction. Thus, $Y_{i-1}=\{y_{i-1}^+,y_{i-1}^-\}$, and hence $y_{i+1}^-\in Y_{i+1}$. Similarly, if $Y_{i+1}=\{y_{i+1}^-\}$, then $N(i-2)\subseteq N(y_{i-1}^+)$, a contradiction. Thus, $Y_{i+1}=\{y_{i+1}^+,y_{i+1}^-\}$, and hence $y_{i-2}^-\in Y_{i-2}$. Finally, suppose $Y_{i-2}=\{y_{i-2}^-\}$. Then $N(i)\subseteq N(y_{i+1}^+)$, a contradiction. Thus, $Y_{i-2}=\{y_{i-2}^+,y_{i-2}^-\}$, and hence $y_i^-\in Y_i$. Therefore $Y_j=\{y_j^+,y_j^-\}$ for every $j\in[5]$, and so $G\cong S_3$.

Therefore $G\in\{C_5,S_2,S_3\}$.
\end{proofc}

If $G\cong C_5$ then $G$ is recolorable by \cref{theo:3color}. Otherwise, by Lemma \ref{lemma:S2-recolorable} and Lemma \ref{lemma:S3-recolorable}, both $S_2$ and $S_3$ are recolorable, as desired.

Finally, suppose ${a,b}={2,3}$, that is, $G$ is $(2K_2,K_4,H_2,H_3)$-free. Since $G$ is $(H_2,H_3)$-free, we have $R_i\cup Y_i=\emptyset$ for all $i\in[5]$. Moreover, by (\ref{item:2.4}), $Z=\emptyset$. If $\bigcup_{i=1}^5 F_i=\emptyset$, then since $G$ has no comparable vertices, $|U|\leq 1$. Hence $G$ is either isomorphic to $C_5$, which is recolorable by \cref{theo:3color}, or isomorphic to $W_5$. In the latter case, $W_5$ is recolorable by first recoloring the universal vertex, if necessary, and then applying \cref{theo:3color} to the induced $5$-cycle. Thus, without loss of generality, we may assume $F_1\neq\emptyset$. By (\ref{item:1.3}), $F_3\cup F_4=\emptyset$, and at least one of $F_2$ and $F_5$ is empty. By symmetry, we may assume $F_5=\emptyset$, and hence \[V(G)=V(C)\cup U\cup F_1\cup F_2.\]

First note that $F_1\cup F_2\cup U$ is independent by (\ref{item:1.1}) and (\ref{item:1.2}), and (\ref{item:1.3}). Since $G$ has no comparable vertices, if $U\neq\emptyset$, then $F_1\cup F_2=\emptyset$. Moreover, $|U|=1$, since two vertices in $U$ would have the same open neighborhood. Hence $G\cong W_5$, which is recolorable as described above: first recolor the universal vertex, if necessary, and then apply \cref{theo:3color} to the induced $5$-cycle. Thus, we may assume $U=\emptyset$. Similarly, $|F_1|\leq 1$ and $|F_2|\leq 1$. If $F_1=F_2=\emptyset$, then $G\cong C_5$, which is recolorable by \cref{theo:3color}. If $F_i=\{f\}$ and $F_j=\emptyset$, where $\{i,j\}=\{1,2\}$, then $N(i)\subseteq N(f)$, contradicting that $G$ has no comparable vertices. Thus, $F_1$ and $F_2$ are both nonempty. Since $V(G) = V(C)\cup \{f_1,f_2\}$ where $f_i\in F_i$, it is easy to verify that $G$ is $3$-degenerate and hence recolorable by \cref{thm:degeneracy}.

This completes all cases, and hence completes the proof.
\end{proof}

\section{Proof of \cref{theorem:main3}}\label{sec:THIRD-THEOREM}

In this section, we show that  $(2K_2,K_4,C_5)$-free graphs are recolorable. In order to prove Theorem \ref{theorem:main3}, it is enough to consider graphs with no comparable vertices due to Lemma \ref{lem: comparable}. Let $G$ be a $(2K_2, K_4, C_5)$-free graphs with no comparable vertices. If $G$ is also $\overline{C}_7$-free, then $G$ is 3-colorable by The Strong Perfect Theorem \cite{chudnovsky2006strong}. Thus, by \cref{theo:3color}, $G$ is recolorable. Hence to prove \cref{theorem:main3}, it suffices to assume that $G$ with no comparable vertices that contains an induced~$\overline{C}_7$. Before proving \cref{theorem:main3}, we provide several structural properties of $(2K_2, K_4, C_5)$-free graphs that contain a $\overline{C}_7$.

\subsection{Structural properties for \cref{theorem:main3}} \label{subsection:C7-comp-structure}

    Let $G$ be a $(2K_2,K_4,C_5)$-free graph with no pair of comparable vertices. Suppose that $G$ has an induced $\overline{C}_7$ and let $C = \overline{C}_7$ with $V(C) = 1234567$. We define the following sets of vertices.
\begin{align*}
Y_i &= \{v\in V(G)\setminus V(C): N(v)\cap V(C) = \{i-1, i, i+1 \}\}\\
    F_i &= \{v\in V(G)\setminus V(C):  N(v)\cap V(C) = \{i-1, i, i+1,i+2 \}\}.\\
    \end{align*}

\begin{lemma}\label{claim:Thm3-all-of-G}
The following holds. \[V(G) = V(C)\cup \bigcup_{i=1}^7(Y_i \cup F_i).\]
\end{lemma}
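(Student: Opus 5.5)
We determine $N(v)\cap V(C)$ for an arbitrary vertex $v\notin V(C)$ by a direct case analysis. Throughout, we work in the complement $C_7$ of $C=\overline{C}_7$. Write $S=N(v)\cap V(C)$. In $C$, the vertex $i$ is adjacent to $i\pm2$ and $i\pm3$ and nonadjacent to $i\pm1$. Hence ``consecutive'' in $C_7$ means ``nonadjacent'' in $G$. The goal is to show that $S$ is a run $\{i-1,i,i+1\}$ or $\{i-1,i,i+1,i+2\}$ of consecutive indices. Each forbidden subgraph translates into a combinatorial condition on $S$, and the final cases are removed using the assumption that $G$ has no comparable vertices.

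\textbf{Step 1 ($2K_2$).} Suppose $x\in S$ but $x-1,x+1\notin S$. Then $(x-1)(x+1)$ is an edge of $C$, since $x-1$ and $x+1$ are at distance $2$ in $C_7$. Also $x$ is nonadjacent to both $x-1$ and $x+1$. So $vx$ and $(x-1)(x+1)$ induce a $2K_2$. Hence every element of $S$ has a cyclic neighbour in $S$, and $S$ is a disjoint union of runs of consecutive indices, each of length at least $2$.

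\textbf{Step 2 ($K_4$).} Triangles of $C$ are exactly the triples that are pairwise nonconsecutive in $C_7$. Since $\{v\}\cup T$ would induce a $K_4$ for any such triple $T\subseteq S$, $S$ contains none. Consequently a single run has length at most $4$, because a run of length $5$ contains $\{i,i+2,i+4\}$. Two runs need at least $2+1+2+1=6$ of the $7$ positions, and every such configuration other than two runs of length $2$ separated by gaps of sizes $1$ and $2$, such as $\{1,2,4,5\}$, contains a nonconsecutive triple. So after Steps 1 and 2 the possibilities are:
\begin{itemize}
\item $S=\emptyset$;
\item one run of length $2$, $3$ or $4$;
\item $S=\{i,i+1,i+3,i+4\}$.
\end{itemize}
The run of length $3$ is $Y_{i}$ and the run of length $4$ is $F_i$, so these are the allowed outcomes.

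\textbf{Step 3 ($C_5$).} We exclude $S=\{i,i+1,i+3,i+4\}$ by exhibiting an induced $5$-cycle through $v$. Take $v$ together with two nonadjacent vertices of $S$ (for instance $i$ and $i+1$, which are consecutive in $C_7$) and a $C$-path between them avoiding $N(v)$. Using the vertices $i+2$, $i+5$ and $i+6$ outside $S$, one looks for a path of length $3$ whose internal vertices lie outside $S$ and which has no chords. The exact $5$ vertices are found by a short inspection of $C_7$.

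\textbf{Step 4 (no comparable vertices).}
\begin{itemize}
\item If $S=\emptyset$: since $G$ may be assumed connected and $v$ is not comparable to any vertex of $C$, $v$ has a neighbour $w\notin V(C)$ with $S_w:=N(w)\cap V(C)\neq\emptyset$. For every edge $ab$ of $C$ with $a,b\notin S_w$, the edges $vw$ and $ab$ induce a $2K_2$. Hence $S_w$ is a vertex cover of $C$. Independent sets of $\overline{C}_7$ have size at most $2$, so $|S_w|\ge 5$, and $S_w$ then contains a nonconsecutive triple, that is, a triangle of $C$, contradicting Step 2.
\item If $S=\{i,i+1\}$: the aim is to show $N(v)\subseteq N(u)$ for a suitable nonadjacent $u\in V(C)$, or else to find an induced $C_5$ or $2K_2$ through a neighbour of $v$ outside $C$. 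The candidate for $u$ is a vertex adjacent in $C$ to both $i$ and $i+1$ but not to $v$, such as $i+3$ or $i-2$. One then uses that any outside neighbour $w$ of $v$ has $S_w$ of the already classified types, and checks via $2K_2$-freeness that such $w$ must be adjacent to $u$.
\end{itemize}

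I expect this last case, $S=\{i,i+1\}$, to be the main obstacle. It is the only place where the argument needs information about vertices outside $C$. If the comparability argument stalls, the first alternative to try is to build an induced $C_5$ of the form $v,i,x,y,i+1$ with $x,y\in V(C)\setminus S$.
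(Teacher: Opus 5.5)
Your Steps 1–3 are correct, and so is the case $S=\emptyset$: any neighbour $w$ of $v$ works, since your vertex-cover argument already rules out $S_w=\emptyset$. For Step 3, the explicit cycle is $v,i,i+2,i-1,i+1$. The pairs $\{i,i+2\}$, $\{i+2,i-1\}$, $\{i-1,i+1\}$ are at $C_7$-distance $2,3,2$, so they are edges. The pairs $\{i,i-1\}$, $\{i+1,i+2\}$, $\{i,i+1\}$ are consecutive, so they are non-edges. The configuration $S=\{i,i+1,i+3,i+4\}$ is one the paper's own proof passes over. It goes from ``every vertex of $S$ has a cyclic neighbour in $S$ and $2\le|S|\le 4$'' directly to $S$ being a single run.

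The genuine gap is the case $S=\{i,i+1\}$, which you leave as a plan, and the plan as instantiated does not work. Neither candidate you name, $i+3$ or $i-2=i+5$, can be shown to dominate $N(v)$ by $2K_2$-arguments. Take a neighbour $w$ of $v$ with $N(w)\cap V(C)=\{i+4,i+5,i+6\}$, that is, a vertex of $Y_{i+5}$. Then $\{v,w\}$ misses only the pair $\{i+2,i+3\}$ of $C$, which is a non-edge, so no $2K_2$ arises, yet $w\not\sim i+3$. Symmetrically, a vertex of $Y_{i+3}$ defeats $i+5$.

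The paper instead uses $u=i+4$, the vertex at $C_7$-distance $3$ from both $i$ and $i+1$. For any $w\in N(v)\setminus V(C)$, the set $N(w)\cap V(C)$ must contain an endpoint of every edge of $C$ inside $\{i+2,\dots,i+6\}$, since otherwise $vw$ and that edge form a $2K_2$. Suppose $i+4\notin N(w)$. Covering $(i+2)(i+4)$ and $(i+4)(i+6)$ forces $N(w)\cap V(C)=\{i-1,i,i+1,i+2\}$, the only run of length at most $4$ containing $i+2$ and $i+6$ but not $i+4$. That set leaves $(i+3)(i+5)$ uncovered, a contradiction.

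Simpler still, your fallback succeeds immediately and is exactly your Step 3 cycle. That cycle only uses $v\sim i,i+1$ and $v\not\sim i-1,i+2$, so $v,i,i+2,i-1,i+1$ is also an induced $C_5$ when $S=\{i,i+1\}$. So $C_5$-freeness forces every maximal run of $S$ to have length at least $3$, which excludes $\{i,i+1\}$ and $\{i,i+1,i+3,i+4\}$ at once. This differs from the paper, which excludes this case via comparability with $i+4$. Your corrected argument needs nothing beyond $v$ having a neighbour when $S=\emptyset$.
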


\begin{proof}
Let $v\in V(G)\setminus V(C)$. First suppose $N(v)\cap V(C)\neq \emptyset$. If $i\in N(v)$, then $\{i-1, i+1\}\cap N(v)\neq \emptyset$ since otherwise $\{i-1,i, i+1, v\}$ induces a $2K_2$. Thus, $|N(v)\cap V(C)|\geq 2$. Next we show that $|N(v)\cap V(C)|\leq 4$. Suppose for contradiction $\{u_1, \dots, u_5\}\subseteq N(v)\cap V(C)$ written according to their order around $C$. Then $u_1u_3,u_1u_4, u_2u_4, u_2u_5, u_3u_5\in E(G)$ and since $G$ is $C_5$-free there exists some $i\in [5]$ such that $u_iu_{i+1}\in E(G)$. Thus, $\{u_i, u_{i+1}, u_{i+3}, v\}$ forms a $K_4$.

Letting\[R_i = \{v\in V(G)\setminus V(C): N(v)\cap V(C) = \{i,i+1\} \}\quad \text{and} \quad Z = \{v\in V(G)\setminus V(C) : N(v) \cap V(C) = \emptyset\},\] the above paragraph implies that \[V(G) = V(C) \cup \bigcup_{i=1}^7 (R_i \cup Y_i\cup F_i) \cup Z.\] It suffices to show that $R_i = \emptyset$ for all $i\in [7]$ and $Z=\emptyset$.

Suppose for contradiction $r_i\in R_i$. We claim that $r_i \in R_i$ is comparable with $i+4$. Indeed if not there must exist $v\in N(r_i)\setminus N(i+4)$. Note $v\notin V(C)$. If $|(N(v)\cap V(C))\cup (N(r_i)\cap V(C))|\leq 4$, then there exists $i,j\in [7]$ such that $\{i,j, v,r_i\}$ induces a $2K_2$. Thus, $v\notin Y_{i-1}\cup Y_i\cup Y_{i+1} \cup Y_{i+2} \cup F_{i-1}\cup F_{i}\cup F_{i+1}$. Thus,
\[N(r_i)\subseteq Y_{i+3}\cup Y_{i+4}\cup Y_{i+5} \cup F_{i+2} \cup F_{i+3} \cup F_{i+4} \cup F_{i+5} \cup \{i\}\cup \{i+1\}\subseteq N(i+4).\]
which is a contradiction.

Now suppose for contradiction $z\in Z$. Since $G$ is connected, we can choose such a $z$ with neighbor $v$ such that $N(v)\cap N(C) \neq \emptyset$. By our previous argument, $|N(v)\cap V(C)|\leq 4$. But then there exists $i,j\in [7]$ such that $\{i,j, v, z\}$ induces a $2K_2$, which is a contradiction.
\end{proof}

\begin{lemma}\label{claim:theorem3-basic-properties}
The following hold, with indices taken modulo $7$.
\begin{enumerate}[label={(5.\arabic*)}, ref={5.\arabic*}]
    \item\label{item:5.1} $Y_i$ and $F_i$ are independent.
    
    \item\label{item:5.2} $Y_i$ is complete to $Y_{i+2} \cup Y_{i-2}$.
    
    \item\label{item:5.3} $Y_i$ is anticomplete to $Y_{i+1}$ and $Y_{i-1}$. Moreover, at least one of $Y_i$ and $Y_{i+3}$ is empty, and at least one of $Y_i$ and $Y_{i-3}$ is empty.
    
    \item\label{item:5.4} $Y_i$ is anticomplete to $F_i$ and $F_{i-1}$.
    
    \item\label{item:5.5} $F_i$ is anticomplete to $F_{i+1}$ and $F_{i-1}$.
    
    \item\label{item:5.6} $F_i$ is complete to $Y_{i-3}$, $F_{i+3}$, and $F_{i-3}$.
\end{enumerate}
\end{lemma}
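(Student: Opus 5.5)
The plan is to verify each item directly by exhibiting a forbidden induced subgraph, using only the adjacency rule of $C=\overline{C}_7$: two vertices $j,k$ of $C$ are adjacent exactly when $j-k\equiv\pm2,\pm3\pmod 7$. In particular, consecutive vertices $j,j+1$ are the only nonadjacent pairs. The neighbourhoods on $C$ are then:
\begin{itemize}
\item a vertex of $Y_i$ sees $\{i-1,i,i+1\}$, in which only $i-1,i+1$ are adjacent;
\item a vertex of $F_i$ sees $\{i-1,i,i+1,i+2\}$, which contains the edges $(i-1)(i+1)$, $i(i+2)$ and $(i-1)(i+2)$.
\end{itemize}
For each pair of sets considered, I first check whether an edge or a non-edge between them produces a $2K_2$, $K_4$ or $C_5$ with a suitable pair of vertices of $C$. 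I use the absence of comparable vertices only if a purely local argument fails.

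For (\ref{item:5.1}): two adjacent vertices of $Y_i$, together with the adjacent common neighbours $i-1,i+1$, induce a $K_4$. The same argument applies in $F_i$. For the "anticomplete" statements (\ref{item:5.3}), (\ref{item:5.4}) and (\ref{item:5.5}), I would assume an edge $xy$ and look for two adjacent vertices of $C$ outside $N(x)\cup N(y)$. That gives a $2K_2$. Failing that, a common adjacent pair in $N(x)\cap N(y)\cap V(C)$ together with $x,y$ gives a $K_4$. For example, for $y\in Y_i$ and $f\in F_i$, the pair $i-1,i+1$ lies in both neighbourhoods, so $yf$ forces a $K_4$. I expect each of these to be a two-line check.

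For the "complete" statements (\ref{item:5.2}) and (\ref{item:5.6}), I would assume a non-edge $xy$. I then look for $a\in N(x)\setminus N(y)$ and $b\in N(y)\setminus N(x)$ with $ab\notin E(G)$, i.e.\ $a,b$ consecutive on $C$, which yields the $2K_2$ $\{xa,yb\}$. When no such consecutive pair exists, as for $Y_i$ versus $Y_{i+2}$, I would instead build an induced $C_5$. It would pass through $x$, one private neighbour of $x$, a vertex of $C$, one private neighbour of $y$, and $y$, chosen so that all chords are missing.

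For the emptiness part of (\ref{item:5.3}), take $y\in Y_i$ and $y'\in Y_{i+3}$. If they are nonadjacent, then $y(i+1)$ and $y'(i+2)$ form a $2K_2$, because $i+1$ and $i+2$ are consecutive. If they are adjacent, then $\{y,y',i+1,i+3\}$ is a $K_4$. Hence one of $Y_i$ and $Y_{i+3}$ is empty, and the statement for $Y_{i-3}$ follows by symmetry.

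The main obstacle I anticipate is (\ref{item:5.2}), and possibly the $F_{i\pm3}$ part of (\ref{item:5.6}). There the neighbourhoods are so interlocked that no $2K_2$ arises, and the right induced $C_5$ has to be found by hand. If that fails, I would fall back on the no-comparable-vertices hypothesis together with Lemma~\ref{claim:Thm3-all-of-G}. The argument would show that a non-edge forces, say, $N(y)\subseteq N(j)$ for a suitable $j\in V(C)$ not adjacent to $y$, mimicking the argument used for $R_i$ in the proof of Lemma~\ref{claim:Thm3-all-of-G}.
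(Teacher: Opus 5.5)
Your local strategy matches the paper's, and most of your individual checks are correct. The adjacent case of the emptiness claim in (5.3) fails, however. For $y\in Y_i$ and $y'\in Y_{i+3}$ with $yy'\in E(G)$, the set $\{y,y',i+1,i+3\}$ is not a $K_4$. Indeed, $N(y)\cap V(C)=\{i-1,i,i+1\}$ does not contain $i+3$, and $N(y')\cap V(C)=\{i+2,i+3,i+4\}$ does not contain $i+1$. These four vertices induce the $4$-cycle $y\,y'\,(i+3)\,(i+1)$, which is not forbidden. Your generic recipe for anticomplete statements cannot repair this either. The $C$-neighbourhoods of $y$ and $y'$ are disjoint, so there is no $K_4$ through two vertices of $C$. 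Their union is all of $V(C)$ except $i+5$, so there is no $2K_2$ formed by $yy'$ and an edge of $C$.

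The missing ingredient is an induced $C_5$, namely $y\,y'\,(i+2)\,(i+5)\,(i+1)\,y$. The pairs $(i+2,i+5)$ and $(i+5,i+1)$ differ by $\pm 3$, so they are edges of $C$. The vertices $i+1,i+2$ are consecutive, hence nonadjacent. Finally, $y$ misses $i+2$ and $i+5$, and $y'$ misses $i+1$ and $i+5$. For $i=1$ this is exactly the paper's cycle on $\{y_1,y_4,3,6,2\}$. With this replacement your argument is complete.

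The obstacles you anticipated elsewhere do not arise. For (5.2), $y\,i\,(i+2)\,y'\,(i+1)\,y$ is an induced $C_5$; for $i=1$ it is the paper's $\{y,2,y',3,1\}$. For $F_i$ versus $F_{i+3}$ in (5.6), your consecutive-private-pair search succeeds: $i-1\in N(f)\setminus N(f')$ and $i-2\in N(f')\setminus N(f)$ give the $2K_2$ formed by $f(i-1)$ and $f'(i-2)$, as in the paper. So the no-comparable-vertices fallback is never needed. Your $K_4$ argument for (5.1) is a harmless alternative to the paper's $2K_2$ using the edge $(i+3)(i+5)$.
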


\begin{proof}
First consider (\ref{item:5.1}). Suppose there is an edge $e$ in $Y_1$. Then $e$ and $64$ form a $2K_2$, a contradiction. Similarly, if there is an edge $e \in F_1$, then $e$ and $64$ form a $2K_2$. 

Next consider (\ref{item:5.2}). Suppose that there exist two non-adjacent vertices $y \in Y_1$ and $y' \in Y_3$. Then $\{y, 2, y', 3, 1\}$ induces a $C_5$, a contradiction.    

Next consider (\ref{item:5.3}).   Suppose there are two adjacent vertices $y \in Y_1$ and $y' \in Y_2$. Then $\{y, y', 6, 4\}$ forms a $2K_2$, a contradiction. Next, suppose $y \in Y_1$ and $y' \in Y_4$ are adjacent; then $\{y, 2, 6, 3, y'\}$ induces a $C_5$, a contradiction. Finally, suppose $y \in Y_1$ and $y' \in Y_5$ are adjacent; then $\{y, 7, 3, 6, y'\}$ induces a $C_5$, a contradiction. For the second part, suppose $y_1 \in Y_1$ and $y_4 \in Y_4$. If $y_1 y_4 \notin E(G)$, then $\{y_1 2, y_4 3\}$ forms a $2K_2$. If $y_1 y_4 \in E(G)$, then $\{y_1, y_4, 3, 6, 2\}$ induces a $C_5$. Either way we obtain a contradiction.

Next consider (\ref{item:5.4}).  Suppose $y \in Y_1$ and $f \in F_7$ are adjacent. Then $\{y, f, 7, 2\}$ induces a $K_4$, a contradiction. Similarly, if $y \in Y_1$ and $f \in F_1$ are adjacent, then $\{y, f, 7, 2\}$ induces a $K_4$. 

Next consider (\ref{item:5.5}). Suppose $f \in F_1$ and $f' \in F_2$ are adjacent. Then $\{f, f', 1, 3\}$ induces a $K_4$, a contradiction. Similarly, if $f \in F_1$ and $f' \in F_7$ are adjacent, then $\{f, f', 7, 2\}$ induces a $K_4$.

Next consider (\ref{item:5.6}). Suppose for contradiction there exits $f\in F_i$ and $y\in Y_{i-3}$ with $fy\notin E(G)$. Then $\{f,i+2, y, i+3\}$ induce a $2K_2$, a contradiction. Similarly, if $f'\in F_{i+3}$ and $ff'\notin E(G)$, then $\{f, i-1,f', i-2\}$ induces a $2K_2$, a contradiction.\end{proof}

We now establish three additional structural claims for $(2K_2,K_4,C_5)$-free graphs containing an induced $\overline{C}_7$. These claims will provide the key justifications for several recoloring steps in the proof of \cref{theorem:main3}. To keep the recoloring sequences readable, we will indicate applications of these claims by writing $\overset{*}{\to}$, $\overset{**}{\to}$, and $\overset{***}{\to}$, respectively.

\begin{lemma}[$\overset{*}{\to}$]\label{claim:thm3-star}
Let $y_i\in Y_i$. If $y_i$ has a neighbor in one of $F_{i-2}$ and $F_{i+1}$, then $y_i$ is anticomplete to the other.
\end{lemma}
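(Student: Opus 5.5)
The plan is to argue by contradiction and exhibit an induced $K_4$. Suppose $y_i\in Y_i$ has a neighbor $f\in F_{i-2}$ and a neighbor $f'\in F_{i+1}$. The goal is to find a common neighbor of $y_i,f,f'$ on $C$ and to show that $f$ and $f'$ are adjacent.

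First I would read off the neighborhoods on $C$ from the definitions:
\[
N(y_i)\cap V(C)=\{i-1,i,i+1\},\qquad
N(f)\cap V(C)=\{i-3,i-2,i-1,i\},\qquad
N(f')\cap V(C)=\{i,i+1,i+2,i+3\}.
\]
So the vertex $i$ of $C$ is adjacent to all three of $y_i$, $f$ and $f'$.

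Next I would show that $f$ and $f'$ are adjacent. Since $(i-2)+3=i+1$, the set $F_{i-2}$ is complete to $F_{(i-2)+3}=F_{i+1}$ by (\ref{item:5.6}), so $ff'\in E(G)$. Together with the assumed edges $y_if$ and $y_if'$, this makes $\{y_i,f,f'\}$ a triangle.

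Then $\{y_i,f,f',i\}$ induces a $K_4$, contradicting that $G$ is $K_4$-free. Hence if $y_i$ has a neighbor in one of $F_{i-2}$ and $F_{i+1}$, it is anticomplete to the other.

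I expect essentially no obstacle: the argument is the single observation that $F_{i-2}$ and $F_{i+1}$ are at distance $3$ in the index cycle, so (\ref{item:5.6}) applies. The only care needed is the index arithmetic modulo $7$ when checking that $i$ is a common neighbor.
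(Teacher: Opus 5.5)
Your proposal is correct and matches the paper's proof: both take neighbors $f\in F_{i-2}$ and $f'\in F_{i+1}$ of $y_i$, use (\ref{item:5.6}) to get $ff'\in E(G)$, and note that $i$ is adjacent to $y_i$, $f$ and $f'$, so $\{i,y_i,f,f'\}$ is a $K_4$. Your index bookkeeping modulo $7$ (confirming that $i$ lies in the cycle-neighborhoods of all three vertices) is exactly the check needed.
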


\begin{proof}
Let $y_i\in Y_i$. Suppose, for a contradiction, that $y_i$ has a neighbor $f_{i-2}\in F_{i-2}$ and a neighbor $f_{i+1}\in F_{i+1}$. By (\ref{item:5.6}), $F_{i-2}$ is complete to $F_{i+1}$, so $f_{i-2}f_{i+1}\in E(G)$. Also, both $f_{i-2}$ and $f_{i+1}$ are adjacent to $i$, and $y_i$ is adjacent to $i$. Hence $\{i,y_i,f_{i-2},f_{i+1}\}$ forms a $K_4$, a contradiction.
\end{proof}

\begin{lemma}[$\overset{**}{\to}$]\label{claim:thm3-2star}
Let $f_i\in F_i$, $f_{i+2}\in F_{i+2}$, and $y_{i+4}\in Y_{i+4}$ such that $f_if_{i+2}\in E(G)$ and $f_{i+2}y_{i+4}\in E(G)$. Then $f_i$ is anticomplete to $Y_{i-1}$.
\end{lemma}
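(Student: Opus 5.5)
The plan is to argue by contradiction. Suppose some $y\in Y_{i-1}$ is adjacent to $f_i$. I will first use $y_{i+4}$ to show that $y$ cannot be adjacent to $f_{i+2}$. I will then exhibit an induced $C_5$ made of $y,f_i,f_{i+2}$ and two vertices of $C$. Throughout, recall that in $C=\overline{C}_7$ each vertex $j$ is adjacent exactly to $j\pm 2$ and $j\pm 3$. On $C$, $y$ sees $\{i-2,i-1,i\}$, $f_i$ sees $\{i-1,i,i+1,i+2\}$, and $f_{i+2}$ sees $\{i+1,i+2,i+3,i+4\}$.

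\textbf{Step 1 ($y f_{i+2}\notin E(G)$).} Since $(i-1)-(i+4)\equiv 2 \pmod 7$, (\ref{item:5.2}) gives $y\,y_{i+4}\in E(G)$. Since $Y_{i+4}=Y_{i-3}$, (\ref{item:5.6}) gives $f_i\,y_{i+4}\in E(G)$. By hypothesis, $f_if_{i+2}$ and $f_{i+2}y_{i+4}$ are edges. Hence $\{f_i,f_{i+2},y_{i+4}\}$ is a triangle, and $y$ is adjacent to $f_i$ and $y_{i+4}$. If $y$ were also adjacent to $f_{i+2}$, then $\{y,f_i,f_{i+2},y_{i+4}\}$ would induce a $K_4$. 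So $yf_{i+2}\notin E(G)$.

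\textbf{Step 2 (an induced $C_5$).} Consider the closed walk $y,\ f_i,\ f_{i+2},\ i+3,\ i-2$. Its consecutive pairs are all edges:
\begin{itemize}
\item $yf_i$ by assumption, and $f_if_{i+2}$ by hypothesis;
\item $f_{i+2}(i+3)$, since $i+3$ is a $C$-neighbour of $f_{i+2}$;
\item $(i+3)(i-2)$, since these differ by $5\equiv -2$ and so are adjacent in $\overline{C}_7$;
\item $(i-2)y$, since $i-2$ is a $C$-neighbour of $y$.
\end{itemize}
Each of the five possible chords is a non-edge:
\begin{itemize}
\item $yf_{i+2}$ by Step 1;
\item $y(i+3)$, since $i+3\notin\{i-2,i-1,i\}$;
\item $f_i(i+3)$ and $f_i(i-2)$, since neither vertex lies in $\{i-1,\dots,i+2\}$;
\item $f_{i+2}(i-2)$, since $i-2=i+5\notin\{i+1,\dots,i+4\}$.
\end{itemize}
Thus these five vertices induce a $C_5$, contradicting that $G$ is $C_5$-free. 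Therefore $f_i$ is anticomplete to $Y_{i-1}$.

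The only genuinely non-mechanical step is Step 1. Without the vertex $y_{i+4}$, nothing forbids the chord $yf_{i+2}$, and that chord would destroy the $C_5$. This is exactly why the hypothesis involving $y_{i+4}$ is present. After that, all that remains is carefully checking adjacencies modulo $7$ against the neighbourhood definitions of $Y_j$ and $F_j$.
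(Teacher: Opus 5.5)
Your proof is correct, but it takes a detour that the paper avoids.

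The paper uses exactly your four vertices $\{y,f_i,f_{i+2},y_{i+4}\}$. Besides the edges you obtain from (\ref{item:5.2}) and from (\ref{item:5.6}) applied to $F_i$, it applies (\ref{item:5.6}) a second time, to $F_{i+2}$. Since $Y_{(i+2)-3}=Y_{i-1}$, this gives $yf_{i+2}\in E(G)$ outright, so the four vertices form a $K_4$ immediately. So the conclusion of your Step 1, $yf_{i+2}\notin E(G)$, already contradicts (\ref{item:5.6}), and Step 2 is not needed.

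Read the other way, your argument is the paper's argument with a different justification for the one edge $yf_{i+2}$. Your Step 2 shows that this edge is forced by $C_5$-freeness in the configuration $y\sim f_i\sim f_{i+2}$. Step 1 then gives the same $K_4$ contradiction. The only thing this buys is replacing the second use of (\ref{item:5.6}), which rests on $2K_2$-freeness, with a direct appeal to $C_5$-freeness. The paper's version is shorter.

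For the same reason, your closing remark is slightly off. The chord $yf_{i+2}$ is not merely ``not forbidden'' when $y_{i+4}$ is absent; it is always present, so the $C_5$ configuration of Step 2 never actually occurs. What $y_{i+4}$ supplies is the triangle $\{f_i,f_{i+2},y_{i+4}\}$ that turns this forced chord into a $K_4$.
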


\begin{proof}
Suppose not. Let $y_{i-1}\in Y_{i-1}$ such that $f_iy_{i-1}\in E(G)$. By (\ref{item:5.6}), $F_i$ is complete to $Y_{i-3}=Y_{i+4}$, so $f_iy_{i+4}\in E(G)$. Also by (\ref{item:5.6}), $F_{i+2}$ is complete to $Y_{i-1}$, so $f_{i+2}y_{i-1}\in E(G)$. Finally, by (\ref{item:5.2}), $Y_{i+4}$ is complete to $Y_{i-1}$, since $(i+4)+2\equiv i-1 \pmod 7$. Therefore the four vertices $\{f_i,f_{i+2},y_{i+4},y_{i-1}\}$ form a $K_4$, contradicting that $G$ is $K_4$-free. Hence $f_i$ is anticomplete to $Y_{i-1}$.
\end{proof}

\begin{lemma}[$\overset{***}{\to}$]\label{claim:thm3-3star}
Let $y_i\in Y_i$, $f_{i+1}\in F_{i+1}$, $y_{i-2}\in Y_{i-2}$, and $f_{i+3}\in F_{i+3}$ such that $y_if_{i+1}\in E(G)$ and $y_{i-2}f_{i+3}\in E(G)$. Then $f_{i+1}$ is nonadjacent to $f_{i+3}$.
\end{lemma}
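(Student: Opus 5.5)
The plan mirrors the proofs of \cref{claim:thm3-star} and \cref{claim:thm3-2star}: argue by contradiction and find a $K_4$. Suppose, for a contradiction, that $f_{i+1}f_{i+3}\in E(G)$. I claim that $\{y_i,f_{i+1},y_{i-2},f_{i+3}\}$ then induces a $K_4$, which contradicts that $G$ is $K_4$-free. So the whole proof reduces to checking that all six pairs among these four vertices are adjacent.

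Three of the six edges come directly from the setup:
\begin{itemize}
\item $y_if_{i+1}$ and $y_{i-2}f_{i+3}$ are edges by hypothesis;
\item $f_{i+1}f_{i+3}$ is an edge by the contradiction assumption.
\end{itemize}

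The other three edges follow from \cref{claim:theorem3-basic-properties}:
\begin{itemize}
\item $y_iy_{i-2}$: by (\ref{item:5.2}), $Y_i$ is complete to $Y_{i-2}$.
\item $f_{i+3}y_i$: by (\ref{item:5.6}), $F_{i+3}$ is complete to $Y_{(i+3)-3}=Y_i$.
\item $f_{i+1}y_{i-2}$: by (\ref{item:5.6}), $F_{i+1}$ is complete to $Y_{(i+1)-3}=Y_{i-2}$.
\end{itemize}

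Hence all six pairs are adjacent, giving the $K_4$ and the desired contradiction, so $f_{i+1}f_{i+3}\notin E(G)$.

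There is no real obstacle here. The only care needed is the index arithmetic modulo $7$, namely confirming that $(i+3)-3=i$ and $(i+1)-3=i-2$, so that (\ref{item:5.6}) applies exactly as stated.
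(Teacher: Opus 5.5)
Your proposal is correct and is essentially the paper's own proof. Both assume $f_{i+1}f_{i+3}\in E(G)$ and obtain the remaining edges $y_iy_{i-2}$, $f_{i+3}y_i$, and $f_{i+1}y_{i-2}$ from (\ref{item:5.2}) and (\ref{item:5.6}), so that $\{y_i,y_{i-2},f_{i+1},f_{i+3}\}$ forms a $K_4$, a contradiction.
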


\begin{proof}
Suppose not. Then $f_{i+1}f_{i+3}\in E(G)$. By (\ref{item:5.2}), $Y_i$ is complete to $Y_{i-2}$, so $y_iy_{i-2}\in E(G)$. Also, by (\ref{item:5.6}), $F_{i+3}$ is complete to $Y_i$, so $y_if_{i+3}\in E(G)$, and $F_{i+1}$ is complete to $Y_{i-2}$, so $y_{i-2}f_{i+1}\in E(G)$. Therefore the four vertices $\{y_i,y_{i-2},f_{i+1},f_{i+3}\}$ form a $K_4$, contradicting that $G$ is $K_4$-free. Hence $f_{i+1}$ is nonadjacent to $f_{i+3}$.
\end{proof}

\subsection{Recoloring proof of \cref{theorem:main3}}\label{subsec:PROOF-OF-THIRD-THEOREM}
We now present the recoloring proof for \cref{theorem:main3} which we restate for convenience.

\begin{theorem*}[\ref{theorem:main3}]
    If $G$ is a $(2K_2,K_4,C_5)$-free graph with no comparable vertices that contains an induced $\overline{C}_7$, then $G$ is recolorable.
\end{theorem*}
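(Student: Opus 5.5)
Following the template of \cref{sec:RecoloringPrelims}, I would first fix a $4$-coloring of $G$ adapted to the decomposition of \cref{claim:Thm3-all-of-G}, i.e.\ a partition of $V(C)\cup\bigcup_{i}(Y_i\cup F_i)$ into four independent sets $I_1,\dots,I_4$. Then I would show that every $\ell$-coloring $\varphi$ with $\ell\ge 5$ (by \cref{lem:only5} it suffices to take $\ell=5$, since the class of $(2K_2,K_4,C_5)$-free graphs is hereditary) can be recolored until some $I_j$ is monochromatic. \cref{claim:CoverI} together with \cref{theo:3color} then moves $\varphi$ to a canonical coloring, and the Renaming Lemma connects any two canonical colorings.

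The first step is to pin down which sets are nonempty. By (\ref{item:5.3}), $Y_i$ and $Y_{i\pm3}$ are never both nonempty, so the indices of the nonempty $Y$-sets lie on a short arc (of length at most three) of the $7$-cycle. I would also use the no-comparable-vertices assumption, as in the proofs of \cref{theorem:main1} and \cref{theorem:main2}, to derive constraints: a vertex of $Y_i$ or $F_i$ must not have its neighborhood contained in that of some cycle vertex. This should force particular $F$-sets to be nonempty whenever a $Y$-set is, and it should bound the sizes of the sets modulo twins. Up to the rotation symmetry of $\overline{C}_7$, I expect this to leave only a handful of configurations.

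For each configuration, I would take the base partition of $\overline{C}_7$ into three pairs and a singleton, $\{1,4\},\{2,5\},\{3,6\},\{7\}$ (in $\overline{C}_7$, $i$ is adjacent to $i\pm2,i\pm3$). I would then distribute each $Y_i$ and $F_i$ into a class containing none of its cycle neighbours. Vertices of $Y_i$ see $\{i-1,i,i+1\}$, so they can join the class of a pair avoiding these; vertices of $F_i$ see $\{i-1,\dots,i+2\}$, so they go in a class inside $\{i+3,\dots,i+5\}$. Independence of each class would be checked using (\ref{item:5.1})--(\ref{item:5.6}). As in \cref{theorem:main1}, I would choose one class $I_4$ to be a maximal independent set and invoke \cref{lem: universal to independent} to get vertices $x_1,x_2,x_3\in I_4$ with $x_j$ complete to $I_j$. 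The case analysis then runs over $\varphi$ restricted to $\{x_1,x_2,x_3\}$, up to renaming: $(\alpha,\alpha,\alpha)$, two equal, or all distinct.

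The main obstacle is the recoloring casework. In each case I would sweep sets $A\to\theta$ in the same style as \cref{claim:Z-clearing} and \cref{claim:Y-set-sweep}. The legality of these sweeps is exactly where the edges between different $F$-sets, and between $F$- and $Y$-sets, are not fully determined by \cref{claim:theorem3-basic-properties}. Here I would invoke \cref{claim:thm3-star}, \cref{claim:thm3-2star} and \cref{claim:thm3-3star}: each one guarantees that a vertex already carrying a color $\theta$ does not block a sweep, or that two sets that might carry the same color are nonadjacent. I would first try to reduce, via a preliminary sweep, to a situation where some $x_j$ can be recolored to $\varphi(x_1)$, making $I_4$ monochromatic. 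Failing that, I would use the fifth color $\varepsilon$ to clear an entire class once the other three classes are forced into four colors. Making this work uniformly across the configurations, rather than configuration by configuration, is the step I expect to be hardest.
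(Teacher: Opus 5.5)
Your outer template matches the paper's: partition into four independent classes, make one class monochromatic, then apply \cref{claim:CoverI}, \cref{theo:3color} and the Renaming Lemma. Your use of (\ref{item:5.3}) is also right. The nonempty $Y$-sets occupy at most three consecutive indices, so up to rotation you may assume $Y_4=Y_5=Y_6=\emptyset$, which even makes the paper's Case~2 redundant.

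However, the parts specific to this theorem are either wrong or missing. First, your base partition does not work. With $i\sim i\pm2,i\pm3$, as you state, the three pairs $\{1,4\},\{2,5\},\{3,6\}$ are edges. The independent pairs of $\overline{C}_7$ are $\{i,i+1\}$, and the paper builds on $\{1,2\},\{3,4\},\{5,6\},\{7\}$.

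Second, and more seriously, the case split on $\varphi(x_1,x_2,x_3)$ obtained from \cref{lem: universal to independent} has essentially no content here. With the correct partition, $I_1=\{1,2\}\cup F_4\cup F_5$ is maximal. But the vertices complete to the other classes can be taken to be just the cycle vertices $1$ (for $I_2$ and $I_3$) and $2$ (for $I_4$). Your split therefore reduces to two cases:
\begin{itemize}
\item ``$\varphi(1)=\varphi(2)$'', which is the paper's one-line opening move;
\item ``$\varphi(1)\neq\varphi(2)$'', which contains the entire difficulty.
\end{itemize}
The paper instead normalizes the colors of all seven cycle vertices. In Case~1 it sets $\varphi(1,2,4,6)=(\alpha,\beta,\gamma,\delta)$ and runs through the fourteen possible values of $\varphi(3,5,7)$. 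Every other vertex lies in some $Y_i$ or $F_i$ with a fixed neighborhood on $C$, so this leaves each set with at most three admissible colors. That is what makes explicit sweeps checkable.

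The second missing idea is how to sweep sets whose adjacencies are not homogeneous. The adjacencies between $F_i$ and $F_{i\pm2}$, and between $Y_i$ and various $F_j$, are not determined by \cref{claim:theorem3-basic-properties}. Moreover, \cref{claim:thm3-star}, \cref{claim:thm3-2star} and \cref{claim:thm3-3star} are statements about individual vertices, not whole sets. So whole-set sweeps in the style of \cref{claim:Y-set-sweep} are unavailable.

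The paper splits sets by adjacency and moves the pieces separately, at different times and to different colors, sometimes twice. For example:
\begin{itemize}
\item $Y_7'$ is the set of vertices of $Y_7$ with a neighbor in $F_5$, and $Y_7''=Y_7\setminus Y_7'$. Then $Y_7''$ is anticomplete to $F_5$ by definition, and $Y_7'$ is anticomplete to $F_1$ by \cref{claim:thm3-star}.
\item $F_3'\subseteq F_3$ and $F_5'\subseteq F_5$ are the vertices with a neighbor in $Y_2'$, resp.\ $Y_7'$. These are anticomplete to each other by \cref{claim:thm3-3star}.
\end{itemize}

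Your intended no-comparable-vertices reductions (forcing $F$-sets to be nonempty, bounding sizes modulo twins) are unproved and not needed. The sets may be large, and the splitting handles them uniformly.

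As written, the proposal defers exactly the part that constitutes the proof: the explicit, step-by-step verified recoloring sequences, fourteen per case in the paper. It also does not identify the two devices that make those sequences possible, namely the right case parameter and the adjacency splits.
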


\begin{proof}
Let $G$ be a $(2K_2,K_4,C_5)$-free with no comparable vertices and contains an induced subgraph $C\cong \overline{C}_7$.

To show that $G$ is recolorable, we use the following strategy. We consider a specific partition of $V(G)$ into four independent sets $I_1,I_2,I_3,I_4$, which we define momentarily. We say that a $4$-coloring of $G$ is \textit{canonical} if its color classes are precisely $I_1,I_2,I_3$, and $I_4$; in particular, any permutation of the color classes of a canonical coloring is again canonical. To prove that $\mathcal{R}_{\ell}(G)$ is connected, it suffices to show that every $\ell$-coloring of $G$ can be recolored to a canonical coloring. Indeed, since $\ell>4$, any two canonical colorings are connected in $\mathcal{R}_{\ell}(G)$.

Let $\varphi$ be an $\ell$-coloring of $G$. We adapt the notation developed in \cref{subsection:C7-comp-structure} namely where $C = 1234567$ and by Lemma \ref{claim:Thm3-all-of-G} \[V(G) = V(C)\cup \bigcup _{i=1}^7 Y_i \cup \bigcup_{i=1}^7F_i.\]

Throughout the recoloring sequences below, if one of the sets appearing in a recoloring step is empty, then that step is simply omitted. If all of the sets $Y_i$ are empty, we may follow either Case 1 or Case 2 of the two recoloring templates below, deleting every step involving a $Y$-set. Thus, from now on we may assume that some $Y_i$ is nonempty, and by
cyclic symmetry we assume $Y_1\neq\emptyset$.

By (\ref{item:5.3}) we have $Y_4\cup Y_5=\emptyset$. We will consider two cases depending on if $Y_6=\emptyset$ or $Y_6\neq \emptyset$. In both cases we get a different set of $4$ independent sets which will define what a canonical coloring is. Note if $Y_6\neq \emptyset$, then $Y_2\cup Y_3=\emptyset$ by (\ref{item:5.3}).

\begin{center}
\begin{minipage}[t]{0.45\linewidth}
\centering
\textbf{Case 1: $Y_6=\emptyset$}

\vspace{-.5cm}

\begin{align*}
    I_1 &= \{1\}\cup\{2\}\cup F_4\cup F_5,\\
    I_2 &= \{3\}\cup \{4\}\cup F_6\cup F_7\cup Y_7,\\
    I_3 &= \{5\} \cup \{6\}\cup F_1\cup Y_1\cup Y_2,\\
    I_4 &= \{7\}\cup F_2\cup F_3\cup Y_3.
\end{align*}

\end{minipage}
\hfill
\begin{minipage}[t]{0.45\linewidth}
\centering
\textbf{Case 2: $Y_6\neq\emptyset$}

\vspace{-.5cm}

\begin{align*}
    I_1 &= \{1\}\cup \{2\}\cup F_4\cup F_5,\\
    I_2 &= \{3\}\cup \{4\}\cup F_6\cup Y_6\cup Y_7,\\
    I_3 &= \{5\}\cup F_1\cup F_7\cup Y_1,\\
    I_4 &= \{6\} \cup \{7\}\cup F_2\cup F_3.
\end{align*}

\end{minipage}
\end{center}

In Case 1, if $\varphi(1)=\varphi(2)$, then we recolor $I_1$ with $\varphi(1)$ and apply Lemma \ref{claim:CoverI}. In Case 2, if $\varphi(6)=\varphi(7)$, then we recolor $I_4$ with $\varphi(6)$ and apply Lemma \ref{claim:CoverI}. Thus, in Case 1 we may assume $\varphi(1)\neq \varphi(2)$, and in Case 2 we may assume $\varphi(6)\neq \varphi(7)$. Since $146$, $246$, and $247$ form triangles in Case 1 and Case 2, we may assume, respectively, that
\[
\varphi(1,2,4,6)=(\alpha,\beta,\gamma,\delta)
\qquad\text{and}\qquad
\varphi(6,7,2,4)=(\alpha,\beta,\gamma,\delta).
\]

Thus, in Case~1 the possible values of $\varphi(3,5,7)$ are the same as the
possible values of $\varphi(1,3,5)$ in Case~2. Let $\tau$ denote $\varphi(3,5,7)$ in
Case~1 and $\varphi(1,3,5)$ in Case~2. We list below the fourteen possible
values of $\tau$.
\[
\renewcommand{\arraystretch}{1.3}
\begin{array}{lll}
(\beta,\gamma,\alpha) & (\beta,\varepsilon,\alpha) & (\gamma,\varepsilon,\delta) \\
(\beta,\gamma,\delta) & (\beta,\varepsilon,\delta) & (\varepsilon,\gamma,\alpha) \\
(\beta,\delta,\alpha) & (\beta,\delta,\varepsilon) & (\varepsilon,\gamma,\delta) \\
(\gamma,\delta,\alpha) & (\gamma,\delta,\varepsilon) & (\varepsilon,\delta,\alpha) \\
(\beta,\gamma,\varepsilon) & (\gamma,\varepsilon,\alpha) \\
\end{array}
\]

In the tables below we list, for each case and each value of $\tau$, a
legal recoloring sequence together with the resulting conclusion. We label
these subcases $x.y$, where $x \in \{1,2\}$ denotes the case and
$y \in \{1,\dots,14\}$ denotes the $y$-th value of $\tau$ in the list above.
For instance, Subcase~1.1 is Case~1 with $\tau = (\beta, \gamma, \alpha)$. The subcase column is hyperlinked to the rigorous justification for why such recoloring sequence is legal. Thus, these to tables together with the justifications, completes the proof.
\end{proof}


\begin{center}
\textbf{Recoloring sequences for Case 1: $Y_6=\emptyset$.}

\vspace{0.5ex}
\scriptsize
\setlength{\tabcolsep}{3pt}
\renewcommand{\arraystretch}{1.35}
\begin{tabularx}{\linewidth}{
    >{\centering\arraybackslash}p{0.05\linewidth}
    >{\centering\arraybackslash}p{0.11\linewidth}
    >{\arraybackslash}X
    >{\centering\arraybackslash}p{0.11\linewidth}
}
\hline
\textbf{Subcase} & \textbf{$\varphi(3,5,7)$} & \textbf{Legal recoloring sequence} & \textbf{Conclusion} \\
\hline

\midrule

\hyperref[subcase:1.1]{1.1} & $(\beta,\gamma,\alpha)$
&$\begin{aligned}
&F_3\to \alpha,\quad Y_1\to \gamma,\quad Y''_7\to \beta,\quad F_1\overset{*}{\to} \gamma,\quad Y_2\to \delta,\quad F_2\to \delta,\quad Y_3\to \delta,\quad F_6\to \beta,\\
&F_7\to \gamma,\quad Y'_7\overset{*}{\to} \gamma,\quad F_4\to \varepsilon,\quad F_5\to \varepsilon,\quad 1\to \varepsilon,\quad 2\to \varepsilon
\end{aligned}$
&$  I_1 = \varepsilon$
\\
\midrule

\hyperref[subcase:1.2]{1.2} & $(\beta,\gamma,\delta)$
&
$\begin{aligned}
&F_6\to \beta,\quad F_2\to \delta,\quad Y_3\to \delta,\quad F_4\to \alpha,\quad Y''_2\to \delta,\quad F''_3\to \delta,\quad F'_5\overset{***}{\to} \alpha,\quad F''_5\to \beta,\\ &F'_3\overset{***}{\to} \alpha,\quad Y_7\to \beta,\quad Y'_2\overset{*}{\to} \varepsilon,\quad F'_3\to \delta
\end{aligned}$
&
$  I_4 = \delta$
\\
\midrule

\hyperref[subcase:1.3]{1.3} & $(\beta,\delta,\alpha)$
&
$\begin{aligned}
&F_3\to \alpha,\quad F_6\to \beta,\quad Y_1\to \gamma,\quad Y_2\to \delta,\quad F_2\to \delta,\quad F_7\to \gamma,\quad Y'_7\overset{*}{\to} \gamma,\quad
Y''_7\to \beta,\\ &F_1\overset{*}{\to} \gamma,\quad F_5\to \beta,\quad Y_3\to \delta,\quad F_4\to \varepsilon,\quad F_5\to \varepsilon,\quad 1\to \varepsilon,\quad 2\to \varepsilon
\end{aligned}$
&
$  I_4 = \varepsilon$
\\
\midrule

\hyperref[subcase:1.4]{1.4} & $(\gamma,\delta,\alpha)$
&
$\begin{aligned}
&Y_2\to \delta,\quad F_3\to \alpha,\quad Y_7\to \gamma,\quad F_7\to \gamma,\quad Y''_3\to \alpha,\quad F''_6\to \gamma,\quad F'_4\overset{***}{\to} \beta,\quad Y'_3\to \alpha,\\ &Y'_1\overset{*}{\to} \delta,\quad F'_6\to \gamma
\end{aligned}$
&
$  I_2 = \gamma$
\\
\midrule

\hyperref[subcase:1.5]{1.5} & $(\beta,\gamma,\varepsilon)$ & $\begin{aligned} &Y_1\to \gamma,\quad F_5'' \to \beta, \quad F'_3 \overset{***}{\to} \alpha, \quad F_3''\to \varepsilon, \quad Y_2\to \delta, \quad Y_3' \overset{*}{\to} \delta, \quad Y''_3\to \varepsilon,\quad F_4 \to \alpha, \\  &Y'_3 \to \varepsilon, \quad F_3' \to \varepsilon, \quad F_2 \to \varepsilon \end{aligned}$ & $  I_3 = \delta$ \\ \midrule

\hyperref[subcase:1.6]{1.6} & $(\beta,\varepsilon,\alpha)$
&
$\begin{aligned}
&F_3\to \alpha,\quad F_6\to \beta,\quad Y_1\to \gamma,\quad Y_2\to \delta,\quad F_2\to \delta,\quad F_7\to \gamma, \quad Y'_7\overset{*}{\to} \gamma,\quad Y''_7\to \beta,\\
&Y'_3\overset{*}{\to} \alpha,\quad Y''_3\to \delta,\quad F_1\to \varepsilon,\quad Y_1\to \varepsilon,\quad Y_2\to \varepsilon,\quad 6\to \varepsilon
\end{aligned}$
&
$  I_3 = \varepsilon$
\\
\midrule

\hyperref[subcase:1.7]{1.7} & $(\beta,\varepsilon,\delta)$
&
$\begin{aligned}
&F_6\to \beta,\quad F_2\to \delta,\quad Y_3\to \delta,\quad F_4\to \alpha,\quad Y''_2\to \delta,\quad F''_3\to \delta, \quad F_5''\to \beta, \quad F'_5\overset{***}{\to} \alpha,\\ &F'_3\overset{***}{\to} \alpha,\quad Y_7\to \beta,\quad Y'_2\overset{*}{\to} \varepsilon,\quad F'_3\to \delta
\end{aligned}$
&
$  I_4 = \delta$
\\
\midrule

\hyperref[subcase:1.8]{1.8} & $(\beta,\delta,\varepsilon)$
&
$\begin{aligned}
&F_6\to \beta,\quad Y_1\to \gamma,\quad Y''_3\to \varepsilon,\quad Y'_3\overset{*}{\to} \delta,\quad F_4\to \alpha,\quad Y'_3\to \varepsilon,\quad Y_2\to \delta,\quad F_3\to \varepsilon,\\
&F_2\to \varepsilon
\end{aligned}$
&
$  I_4 = \varepsilon$
\\
\midrule

\hyperref[subcase:1.9]{1.9} & $(\gamma,\delta,\varepsilon)$
&
$\begin{aligned}
&Y_2\to \delta,\quad F_3\to \varepsilon,\quad Y_7\to \gamma,\quad F_5\to \beta,\quad
Y''_3\to \varepsilon,\quad Y_1''\to \gamma, \quad F_4''\to \alpha, \quad F_6'' \to \gamma, \\ &F_4' \overset{***}{\to}\beta, \quad F_6' \overset{***}{\to} \beta,\quad Y_1'\to \gamma, \quad Y'_3\overset{*}{\to} \delta, \quad Y'_3\to \varepsilon,\quad F_2\to \varepsilon
\end{aligned}$
&
$  I_4 = \varepsilon$
\\
\midrule

\hyperref[subcase:1.10]{1.10} & $(\gamma,\varepsilon,\alpha)$
&
$\begin{aligned}
&Y'_3\overset{*}{\to} \alpha,\quad
F_6'' \to \gamma, \quad 
Y_7 \to \gamma, \quad
F_4' \overset{***}{\to} \beta, \quad
Y''_3\to \alpha,\quad
Y'_1\overset{*}{\to} \varepsilon,\quad
F_6' \to \gamma, \quad 
F_7\to \gamma,\quad
\end{aligned}$
&
$  I_2 = \gamma$
\\
\midrule

\hyperref[subcase:1.11]{1.11} & $(\gamma,\varepsilon,\delta)$
&
$\begin{aligned}
&F_2\to \delta,\quad
Y_7\to \gamma,\quad
Y_3\to \delta,\quad
F_7\to \gamma,\quad
Y_1\to \varepsilon,\quad
Y_2\to \varepsilon,\quad
6\to \varepsilon
\end{aligned}$
&
$  I_3 = \varepsilon$
\\
\midrule

\hyperref[subcase:1.12]{1.12} & $(\varepsilon,\gamma,\alpha)$
&
$\begin{aligned}
&F_3\to \alpha,\quad Y_2\to \delta,\quad Y_1\to \gamma,\quad
Y'_7\overset{*}{\to} \gamma,\quad Y''_7\to \varepsilon, \quad F_6\to \varepsilon,\quad F_5\to \beta,\quad F_4\to \beta,\\ &1\to \beta
\end{aligned}$
&
$  I_1 = \beta$
\\
\midrule

\hyperref[subcase:1.13]{1.13} & $(\varepsilon,\gamma,\delta)$
&
$\begin{aligned}
&F_2\to \delta,\quad Y_1\to \gamma,\quad Y_3\to \delta,\quad F''_3\to \delta,\quad F'_5\overset{***}{\to} \alpha,\quad F''_5\to \beta, \quad
F_4\to \alpha,\quad Y_7\to \beta,\\ 
&F_7\to \varepsilon,\quad Y'_2 \to \gamma,\quad F'_3\to \delta
\end{aligned}$
&
$  I_4 = \delta$
\\
\midrule

\hyperref[subcase:1.14]{1.14} & $(\varepsilon,\delta,\alpha)$
&
$\begin{aligned}
&Y_2 \to \delta, \quad F_3 \to \alpha, \quad Y'_7\overset{*}{\to} \gamma,\quad F_5\to \beta,\quad Y_7\to \varepsilon,\quad F_7\to \varepsilon,\quad F''_6\to \varepsilon,\quad
F'_4\overset{***}{\to} \beta,\\ 
&Y_3\to \alpha,\quad Y'_1\overset{*}{\to} \delta,\quad F'_6\to \varepsilon,\quad 4\to \varepsilon
\end{aligned}$
&
$  I_2 = \varepsilon$
\\
\midrule

\hline
\end{tabularx}
\end{center}

\newpage


\begin{center}
\textbf{Recoloring sequences for Case 2: $Y_6\neq\emptyset$.}

\vspace{0.5ex}
\scriptsize
\setlength{\tabcolsep}{3pt}
\renewcommand{\arraystretch}{1.35}
\begin{tabularx}{\linewidth}{
    >{\centering\arraybackslash}p{0.05\linewidth}
    >{\centering\arraybackslash}p{0.11\linewidth}
    >{\arraybackslash}X
    >{\centering\arraybackslash}p{0.11\linewidth}
}
\hline
\textbf{Subcase} & \textbf{$\varphi(1,3,5)$} & \textbf{Legal recoloring sequence} & \textbf{Conclusion} \\
\hline

\midrule

\hyperref[subcase:2.1]{2.1} & $(\beta,\gamma,\alpha)$
&
$
\begin{aligned}
&F_4\to \beta,\quad F_1\to \alpha,\quad F_3\to \beta,\quad Y_7\to \delta,\quad F_5\to \gamma,\quad Y_6\to \gamma,\quad F_6\to \gamma,\quad Y_1\to \delta,\\ &F_7\to \delta,\quad
F_2\to \varepsilon,\quad F_3\to \varepsilon,\quad 6\to \varepsilon,\quad 7\to \varepsilon
\end{aligned}
$
&
$  I_4=\varepsilon$
\\
\midrule

\hyperref[subcase:2.2]{2.2} & $(\beta,\gamma,\delta)$
&
$
\begin{aligned}
&Y_1\to \delta,\quad F_3\to \beta,\quad F_2\to \alpha,\quad F_7\to \delta,\quad F_1\to \alpha,\quad Y_7\to \delta,\quad F_5\to \gamma,\quad F_4\to \beta,\\ &F_6 \to \varepsilon, \quad Y_6\to \varepsilon,\quad Y_7\to \varepsilon,\quad 3\to \varepsilon,\quad 4\to \varepsilon
\end{aligned}
$
&
$  I_2=\varepsilon$
\\
\midrule

\hyperref[subcase:2.3]{2.3} & $(\beta,\delta,\alpha)$
&
$
\begin{aligned}
&Y_7\to \delta,\quad F_5\to \gamma,\quad F_4\to \beta,\quad F_6\to \gamma,\quad Y_6\to \gamma,\quad F_7\to \delta,\quad Y_1\to \delta,\quad F_1\to \alpha,\\
&F_2\to \varepsilon,\quad F_3\to \varepsilon,\quad 6\to \varepsilon,\quad 7\to \varepsilon
\end{aligned}
$
&
$  I_4=\varepsilon$
\\
\midrule

\hyperref[subcase:2.4]{2.4} & $(\gamma,\delta,\alpha)$
&
$
\begin{aligned}
&F_3\to \beta,\quad F_5\to \gamma,\quad F_1\to \alpha,\quad Y_7\to \delta,\quad Y'_1\overset{*}{\to} \alpha,\quad Y''_6\to \gamma,\quad F_6\to \delta,\quad F'_2\overset{**}{\to} \alpha,\\
&F'_4\to \beta,\quad Y'_6\to \gamma,\quad Y''_1\to \delta,\quad F''_2 \overset{*}{\to} \alpha,\quad F''_4\to \gamma,\quad Y_1\to \varepsilon,\quad F_1\to \varepsilon,\quad F_7\to \varepsilon,\\ &5\to \varepsilon
\end{aligned}
$
&
$  I_3=\varepsilon$
\\
\midrule

\hyperref[subcase:2.5]{2.5} & $(\beta,\gamma,\varepsilon)$
&
$
\begin{aligned}
&F_3\to \beta,\quad F_1\to \alpha,\quad F_6\to \gamma,\quad Y_6\to \gamma,\quad Y_1\to \delta,\quad Y_7\to \delta,\quad F_7\to \delta,\quad F_2\to \alpha,\\ &Y_1\to \varepsilon,\quad F_1\to \varepsilon,\quad F_7\to \varepsilon
\end{aligned}
$
&
$  I_3=\varepsilon$
\\
\midrule

\hyperref[subcase:2.6]{2.6} & $(\beta,\varepsilon,\alpha)$
&
$
\begin{aligned}
&F_1\to \alpha,\quad F_4\to \beta,\quad Y_7\to \delta,\quad F_5\to \gamma,\quad Y_7\to \varepsilon,\quad Y_6\to \gamma,\quad F_6\to \gamma,\quad F_1\to \delta,\\ &Y_1\to \delta,\quad F_7\to \delta,\quad 5\to \delta
\end{aligned}
$
&
$  I_3=\delta$
\\
\midrule

\hyperref[subcase:2.7]{2.7} & $(\beta,\varepsilon,\delta)$
&
$
\begin{aligned}
&Y_1\to \delta,\quad F_3\to \beta,\quad F_1\to \alpha,\quad Y_7\to \delta,\quad F_5\to \gamma,\quad Y_7\to \varepsilon,\quad F_1\to \delta,\quad F_7\to \delta
\end{aligned}
$
&
$  I_3=\delta$
\\
\midrule

\hyperref[subcase:2.8]{2.8} & $(\beta,\delta,\varepsilon)$
&
$
\begin{aligned}
&F_4\to \beta,\quad F_6\to \gamma,\quad Y_6\to \gamma,\quad Y_7\to \delta,\quad Y_1\to \delta,\quad F_2\to \alpha,\quad F_1\to \varepsilon,\quad F_7\to \varepsilon,\\ &Y_1\to \varepsilon
\end{aligned}
$
&
$  I_3=\varepsilon$
\\
\midrule

\hyperref[subcase:2.9]{2.9} & $(\gamma,\delta,\varepsilon)$
&
$
\begin{aligned}
&F_3\to \beta,\quad Y_7\to \delta,\quad F_1 \to \alpha, \quad F_4'' \to \gamma, \quad F_2' \overset{**}{\to} \alpha, \quad F_2'' \to \beta, \quad Y_1 \to \varepsilon, \quad Y_6' \overset{*}{\to} \delta, \\ &F_4'\to \gamma
\end{aligned}
$
&
$  I_4=\varepsilon$
\\
\midrule

\hyperref[subcase:2.10]{2.10} & $(\gamma,\varepsilon,\alpha)$
&
$
\begin{aligned}
&F_3\to \beta,\quad F_5\to \gamma,\quad F_4'' \to \gamma, \quad F_2'' \to \beta, \quad F_1\to \alpha,\quad Y_1 \overset{**}{\to} \alpha, \quad Y_6' \overset{*}{\to} \delta, \quad F_4' \to \gamma
\end{aligned}
$
&
$  I_1=\gamma$
\\
\midrule

\hyperref[subcase:2.11]{2.11} & $(\gamma,\varepsilon,\delta)$
&
$
\begin{aligned}
&F_3\to \beta,\quad F_1\to \alpha,\quad Y_1\to \delta,\quad F_7\to \delta,\quad Y_7\to \delta,\quad Y_6\to \varepsilon,\quad F_4\to \gamma,\quad F_5\to \gamma
\end{aligned}
$
&
$  I_1=\gamma$
\\
\midrule

\hyperref[subcase:2.12]{2.12} & $(\varepsilon,\gamma,\alpha)$
&
$
\begin{aligned}
&Y_6\to \gamma,\quad F_3\to \beta,\quad F_4\to \varepsilon,\quad F_5\to \varepsilon,\quad 2\to \varepsilon
\end{aligned}
$
&
$  I_1=\varepsilon$
\\
\midrule

\hyperref[subcase:2.13]{2.13} & $(\varepsilon,\gamma,\delta)$
&
$
\begin{aligned}
&Y''_7\to \gamma,\quad Y_1\to \delta,\quad F_7\to \delta,\quad F_1 \overset{*}{\to} \delta
\end{aligned}
$
&
$  I_3=\delta$
\\
\midrule

\hyperref[subcase:2.14]{2.14} & $(\varepsilon,\delta,\alpha)$
&
$
\begin{aligned}
&F_3\to \beta,\quad F_4'' \to \varepsilon, \quad F_2'' \to \beta, \quad Y_1 \overset{**}{\to} \alpha, \quad Y_6' \overset{*}{\to} \delta, \quad Y_6'' \to \gamma, \quad F_4' \to \varepsilon, \quad F_5\to \varepsilon, \\ &2\to \varepsilon
\end{aligned}
$
&
$  I_1=\varepsilon$
\\
\midrule

\hline
\end{tabularx}
\end{center}

\newcommand{\subcaseend}{\hfill{\tiny$\blacksquare$}}

\bigskip

\textbf{Justification of recoloring sequences for \cref{theorem:main3}}

Before giving the recoloring sequence justifications, we explain how each justification table should be read.
Each row of a table has the form $X\to \theta$, meaning that every vertex of $X$ (which will always be an independent set) is recolored, one at a time, with color $\theta$. To justify such a step, we sort the vertices outside $X$ according to their possible relationship with the color $\theta$:
\[
\begin{array}{lll}
\text{(i) definitely do not have color $\theta$;} &
\text{(ii) definitely do have color $\theta$;} &
\text{(iii) may have color $\theta$.}
\end{array}
\]
A set of vertices is of type (i) when it has already been assigned or recolored with a color different from $\theta$, or when it is complete to a set of vertices known to have color $\theta$. A set of vertices is of type (ii) when its color is forced to be $\theta$ by the current subcase, or when it has already been recolored with color $\theta$ earlier in the sequence. Finally, a vertex or set is of type (iii) when, from the information available at that moment, it may still contain vertices of color $\theta$.

Only vertices and sets of types (ii) and (iii) can obstruct the recoloring. Thus, the obstruction column records precisely the vertices and sets of types (ii) and (iii) at that point in the sequence. The justification column then verifies that $X$ is anticomplete to all listed obstructions. Since the set being recolored is independent in every step, this proves that no vertex of $X$ has a neighbor colored $\theta$, and hence the recoloring $X\to\theta$ is legal. Vertices and sets of type (i) are omitted from the obstruction column, since they cannot block the recoloring.

\noindent\phantomsection\label{subcase:1.1}\textbf{Subcase 1.1.} Suppose $\varphi(3,5,7)=(\beta, \gamma, \alpha)$. 

Since $\varphi(1,2,3,4,5,6,7)=(\alpha,\beta,\beta,\gamma,\gamma,\delta,\alpha)$, the colors allowed by the cycle are
$Y_1,Y_2,F_1:\{\gamma,\delta,\varepsilon\}$,
$Y_3,F_3:\{\alpha,\delta,\varepsilon\}$,
$Y_7:\{\beta,\gamma,\varepsilon\}$,
$F_2:\{\delta,\varepsilon\}$,
$F_4:\{\alpha,\varepsilon\}$,
$F_5,F_6:\{\beta,\varepsilon\}$, and
$F_7:\{\gamma,\varepsilon\}$.

Let $Y'_7=\{y\in Y_7: y \text{ has a neighbor in }F_5\}$ and let $Y''_7=Y_7\setminus Y'_7$. Then $Y''_7$ is anticomplete to $F_5$, and by Lemma \ref{claim:thm3-star}, $Y'_7$ is anticomplete to $F_1$.

The following table gives the recoloring sequence and verifies each step.
\[
\begin{array}{c|c|c}
\textup{step} & \textup{obstructions} & \textup{Justification: anticomplete by} \\
\hline
F_3\to \alpha
& 1,7,Y_3,F_4
& \textup{definition of }F_3,\ \textup{(\ref{item:5.4}), (\ref{item:5.5})}
\\
Y_1\to \gamma
& 4,5,Y_2,Y_7,F_1,F_7
& \textup{definition of }Y_1,\ \textup{(\ref{item:5.3}), (\ref{item:5.4})}
\\
Y''_7\to \beta
& 2,3,Y'_7,F_5,F_6
& \textup{definition of }Y_7,\ \textup{(\ref{item:5.1}), (\ref{item:5.4}), definition of }Y''_7
\\
F_1\overset{*}{\to} \gamma
& 4,5,Y_1,Y_2,Y'_7,F_7
& \textup{definition of }F_1,\ \textup{(\ref{item:5.4}), (\ref{item:5.5}), Lemma \ref{claim:thm3-star}}
\\
Y_2\to \delta
& 6,Y_3,F_2
& \textup{definition of }Y_2,\ \textup{(\ref{item:5.3}), (\ref{item:5.4})}
\\
F_2\to \delta
& 6,Y_2,Y_3
& \textup{definition of }F_2,\ \textup{(\ref{item:5.4})}
\\
Y_3\to \delta
& 6,Y_2,F_2
& \textup{definition of }Y_3,\ \textup{(\ref{item:5.3}), (\ref{item:5.4})}
\\
F_6\to \beta
& 2,3,Y_7,F_5
& \textup{definition of }F_6,\ \textup{(\ref{item:5.4}), (\ref{item:5.5})}
\\
F_7\to \gamma
& 4,5,Y_1,Y'_7,F_1
& \textup{definition of }F_7,\ \textup{(\ref{item:5.4}), (\ref{item:5.5})}
\\
Y'_7\overset{*}{\to} \gamma
& 4,5,Y_1,F_1,F_7
& \textup{definition of }Y_7,\ \textup{(\ref{item:5.3}), (\ref{item:5.4}), Lemma \ref{claim:thm3-star}}
\\
F_4\to \varepsilon
& F_5
& \textup{(\ref{item:5.5})}
\\
F_5\to \varepsilon
& F_4
& \textup{(\ref{item:5.5})}
\\
1\to \varepsilon
& F_4,F_5
& \textup{definition of }F_4\textup{ and }F_5
\\
2\to \varepsilon
& F_4,F_5,1
& \textup{definition of }F_4\textup{ and }F_5
\end{array}
\]

Thus every step is legal, and the sequence ends with $I_1=\varepsilon$. Hence Lemma \ref{claim:CoverI} applies. \subcaseend

\noindent\phantomsection\label{subcase:1.2}\textbf{Subcase 1.2.} Suppose $\varphi(3,5,7)=(\beta,\gamma,\delta)$. 

Since $\varphi(1,2,3,4,5,6,7)=(\alpha,\beta,\beta,\gamma,\gamma,\delta,\delta)$, the colors allowed by the cycle are
$Y_1,F_1:\{\gamma,\varepsilon\}$,
$Y_2:\{\gamma,\delta,\varepsilon\}$,
$Y_3,F_3:\{\alpha,\delta,\varepsilon\}$,
$Y_7:\{\beta,\gamma,\varepsilon\}$,
$F_2:\{\delta,\varepsilon\}$,
$F_4:\{\alpha,\varepsilon\}$,
$F_5:\{\alpha,\beta,\varepsilon\}$,
$F_6:\{\beta,\varepsilon\}$, and
$F_7:\{\gamma,\varepsilon\}$.

Let $Y'_2=\{y\in Y_2: y \text{ has a neighbor in }F_3\}$ and let $Y''_2=Y_2\setminus Y'_2$. Then $Y''_2$ is anticomplete to $F_3$, and by Lemma \ref{claim:thm3-star}, $Y'_2$ is anticomplete to $F_7$. Similarly, let $Y'_7=\{y\in Y_7: y \text{ has a neighbor in }F_5\}$ and let $Y''_7=Y_7\setminus Y'_7$. Then $Y''_7$ is anticomplete to $F_5$. Now let $F'_3=\{f\in F_3: f \text{ has a neighbor in }Y'_2\}$ and $F''_3=F_3\setminus F'_3$, and let $F'_5=\{f\in F_5: f \text{ has a neighbor in }Y'_7\}$ and $F''_5=F_5\setminus F'_5$. Then $F''_3$ is anticomplete to $Y'_2$, $F''_5$ is anticomplete to $Y'_7$, and by Lemma \ref{claim:thm3-3star}, $F'_3$ is anticomplete to $F'_5$.

The following table gives the recoloring sequence and verifies each step.
\[
\begin{array}{c|c|c}
\textup{step} & \textup{obstructions} & \textup{Justification: anticomplete by} \\
\hline
F_6\to \beta
& 2,3,Y_7,F_5
& \textup{definition of }F_6\textup{, }(\ref{item:5.4}),(\ref{item:5.5})
\\
F_2\to \delta
& 6,7,Y_2,Y_3,F_3
& \textup{definition of }F_2\textup{, }(\ref{item:5.4}),(\ref{item:5.5})
\\
Y_3\to \delta
& 6,7,F_2,Y_2,F_3
& \textup{definition of }Y_3\textup{, }(\ref{item:5.3}),(\ref{item:5.4})
\\
F_4\to \alpha
& 1,F_3,F_5
& \textup{definition of }F_4\textup{, }(\ref{item:5.5})
\\
Y''_2\to \delta
& 6,7,F_2,Y_3,Y'_2,F_3
& \textup{definition of }Y_2\textup{, }(\ref{item:5.1}),(\ref{item:5.3}),(\ref{item:5.4}),\textup{ definition of }Y''_2
\\
F''_3\to \delta
& 6,7,F_2,Y_3,Y''_2,Y'_2,F'_3
& \textup{definition of }F_3\textup{, }(\ref{item:5.1}),(\ref{item:5.4}),(\ref{item:5.5}),\textup{ definition of }Y''_2\textup{ and }F''_3
\\
F'_5\overset{***}{\to} \alpha
& 1,F_4,F'_3,F''_5
& \textup{definition of }F_5\textup{, }(\ref{item:5.1}),(\ref{item:5.5}),\textup{ Lemma \ref{claim:thm3-3star}}
\\
F''_5\to \beta
& 2,3,F_6,Y_7
& \textup{definition of }F_5\textup{, }(\ref{item:5.5}),\textup{ definitions of }Y''_7\textup{ and }F''_5
\\
F'_3\overset{***}{\to} \alpha
& 1,F_4,F'_5
& \textup{definition of }F_3\textup{, }(\ref{item:5.5}),\textup{ Lemma \ref{claim:thm3-3star}}
\\
Y_7\to \beta
& 2,3,F_6,F''_5
& \textup{definition of }Y_7\textup{, }(\ref{item:5.4}),\textup{ definitions of }Y''_7\textup{ and }F''_5
\\
Y'_2\overset{*}{\to} \varepsilon
& Y_1,F_1,F_7
& \textup{(\ref{item:5.3}),(\ref{item:5.4}), Lemma \ref{claim:thm3-star}}
\\
F'_3\to \delta
& 6,7,F_2,Y_3,Y''_2,F''_3
& \textup{definition of }F_3\textup{, }(\ref{item:5.1}),(\ref{item:5.4}),(\ref{item:5.5}),\textup{ definition of }Y''_2
\end{array}
\]
Thus every step is legal, and the sequence ends with $I_4=\delta$. Hence Lemma \ref{claim:CoverI} applies. \subcaseend

\noindent\phantomsection\label{subcase:1.3}\textbf{Subcase 1.3.} Suppose $\varphi(3,5,7)=(\beta,\delta,\alpha)$. 

Since $\varphi(1,2,3,4,5,6,7)=(\alpha,\beta,\beta,\gamma,\delta,\delta,\alpha)$, the colors allowed by the cycle are
$Y_1,Y_2,F_1:\{\gamma,\delta,\varepsilon\}$,
$Y_3:\{\alpha,\delta,\varepsilon\}$,
$Y_7:\{\beta,\gamma,\varepsilon\}$,
$F_2:\{\delta,\varepsilon\}$,
$F_3,F_4:\{\alpha,\varepsilon\}$,
$F_5:\{\beta,\varepsilon\}$,
$F_6:\{\beta,\gamma,\varepsilon\}$, and
$F_7:\{\gamma,\varepsilon\}$.

Let $Y'_7=\{y\in Y_7: y \text{ has a neighbor in }F_5\}$ and let $Y''_7=Y_7\setminus Y'_7$. Then $Y''_7$ is anticomplete to $F_5$, and by Lemma \ref{claim:thm3-star}, $Y'_7$ is anticomplete to $F_1$.

The following table gives the recoloring sequence and verifies each step.
\[
\begin{array}{c|c|c}
\textup{step} & \textup{obstructions} & \textup{Justification: anticomplete by} \\
\hline
F_3\to \alpha
& 1,7,Y_3,F_4
& \textup{definition of }F_3\textup{, }(\ref{item:5.4}),(\ref{item:5.5})
\\
F_6\to \beta
& 2,3,Y_7,F_5
& \textup{definition of }F_6\textup{, }(\ref{item:5.4}),(\ref{item:5.5})
\\
Y_1\to \gamma
& 4,Y_2,Y_7,F_1,F_7
& \textup{definition of }Y_1\textup{, }(\ref{item:5.3}),(\ref{item:5.4})
\\
Y_2\to \delta
& 5,6,F_1,F_2,Y_3
& \textup{definition of }Y_2\textup{, }(\ref{item:5.3}),(\ref{item:5.4})
\\
F_2\to \delta
& 5,6,Y_2,Y_3,F_1
& \textup{definition of }F_2\textup{, }(\ref{item:5.4}),(\ref{item:5.5})
\\
F_7\to \gamma
& 4,Y_1,F_1,Y_7
& \textup{definition of }F_7\textup{, }(\ref{item:5.4}),(\ref{item:5.5})
\\
Y'_7\overset{*}{\to} \gamma
& 4,Y_1,Y''_7,F_1,F_7
& \textup{definition of }Y_7\textup{, }(\ref{item:5.1}),(\ref{item:5.3}),(\ref{item:5.4}),\textup{ Lemma \ref{claim:thm3-star}}
\\
Y''_7\to \beta
& 2,3,F_6,F_5
& \textup{definition of }Y_7\textup{, }(\ref{item:5.4}),\textup{ definition of }Y''_7
\\
F_1\overset{*}{\to} \gamma
& 4,Y_1,F_7,Y'_7
& \textup{definition of }F_1\textup{, }(\ref{item:5.4}),(\ref{item:5.5}),\textup{ Lemma \ref{claim:thm3-star}}
\\
F_5\to \beta
& 2,3,F_6,Y''_7
& \textup{definition of }F_5\textup{, }(\ref{item:5.5}),\textup{ definition of }Y''_7
\\
Y_3\to \delta
& 5,6,Y_2,F_2
& \textup{definition of }Y_3\textup{, }(\ref{item:5.3}),(\ref{item:5.4})
\\
F_4\to \varepsilon
& \emptyset
& \textup{none}
\\
F_5\to \varepsilon
& F_4
& \textup{(\ref{item:5.5})}
\\
1\to \varepsilon
& F_4,F_5
& \textup{definition of }F_4\textup{ and }F_5
\\
2\to \varepsilon
& F_4,F_5,1
& \textup{definition of }F_4\textup{ and }F_5\textup{, and }12\notin E(G)
\end{array}
\]

Thus every step is legal, and the sequence ends with $I_1=\varepsilon$. Hence Lemma \ref{claim:CoverI} applies. \subcaseend

\noindent\phantomsection\label{subcase:1.4}\textbf{Subcase 1.4.} Suppose $\varphi(3,5,7)=(\gamma,\delta,\alpha)$. 

Since $\varphi(1,2,3,4,5,6,7)=(\alpha,\beta,\gamma,\gamma,\delta,\delta,\alpha)$, the colors allowed by the cycle are
$Y_1:\{\gamma,\delta,\varepsilon\}$,
$Y_2,F_1,F_2:\{\delta,\varepsilon\}$,
$Y_3:\{\alpha,\delta,\varepsilon\}$,
$Y_7:\{\beta,\gamma,\varepsilon\}$,
$F_3:\{\alpha,\varepsilon\}$,
$F_4:\{\alpha,\beta,\varepsilon\}$,
$F_5:\{\beta,\varepsilon\}$,
$F_6:\{\beta,\gamma,\varepsilon\}$, and
$F_7:\{\gamma,\varepsilon\}$.

Let $Y'_1=\{y\in Y_1: y \text{ has a neighbor in }F_6\}$ and let $Y''_1=Y_1\setminus Y'_1$. Then $Y''_1$ is anticomplete to $F_6$. Similarly, let $Y'_3=\{y\in Y_3: y \text{ has a neighbor in }F_4\}$ and let $Y''_3=Y_3\setminus Y'_3$. Then $Y''_3$ is anticomplete to $F_4$. Let $F'_6=\{f\in F_6: f \text{ has a neighbor in }Y'_1\}$ and let $F''_6=F_6\setminus F'_6$. Then $F''_6$ is anticomplete to $Y'_1$. Finally, let $F'_4=\{f\in F_4: f \text{ has a neighbor in }Y'_3\}$ and let $F''_4=F_4\setminus F'_4$. Then $F''_4$ is anticomplete to $Y'_3$. Also, by Lemma \ref{claim:thm3-3star}, $F'_4$ is anticomplete to $F'_6$.

The following table gives the recoloring sequence and verifies each step.
\[
\begin{array}{c|c|c}
\textup{step} & \textup{obstructions} & \textup{Justification: anticomplete by} \\
\hline
Y_2\to \delta
& 5,6,Y_1,F_1,F_2,Y_3
& \textup{definition of }Y_2\textup{, }(\ref{item:5.3}),(\ref{item:5.4})
\\
F_3\to \alpha
& 1,7,Y_3,F_4
& \textup{definition of }F_3\textup{, }(\ref{item:5.4}),(\ref{item:5.5})
\\
Y_7\to \gamma
& 3,4,Y_1,F_6,F_7
& \textup{definition of }Y_7\textup{, }(\ref{item:5.3}),(\ref{item:5.4})
\\
F_7\to \gamma
& 3,4,Y_1,Y_7,F_6
& \textup{definition of }F_7\textup{, }(\ref{item:5.4}),(\ref{item:5.5})
\\
Y''_3\to \alpha
& 1,7,F_3,Y'_3,F_4
& \textup{definition of }Y_3\textup{, }(\ref{item:5.1}),(\ref{item:5.4}),\textup{ definition of }Y''_3
\\
F''_6\to \gamma
& 3,4,Y_1,Y_7,F_7,F'_6
& \textup{definition of }F_6\textup{, }(\ref{item:5.1}),(\ref{item:5.4}),(\ref{item:5.5}),\textup{ definition of }F''_6
\\
F'_4\overset{***}{\to}\beta
& 2,F_5,F''_4,F'_6
& \textup{definition of }F_4\textup{, }(\ref{item:5.1}),(\ref{item:5.5}),\textup{ Lemma \ref{claim:thm3-3star}}
\\
Y'_3\to \alpha
& 1,7,F_3,Y''_3,F''_4
& \textup{definition of }Y_3\textup{, }(\ref{item:5.1}),(\ref{item:5.4}),\textup{ definition of }F''_4
\\
Y'_1\overset{*}{\to}\delta
& 5,6,Y''_1,Y_2,F_1,F_2
& \textup{definition of }Y_1\textup{, }(\ref{item:5.1}),(\ref{item:5.3}),(\ref{item:5.4}),\textup{ Lemma \ref{claim:thm3-star}}
\\
F'_6\to \gamma
& 3,4,Y''_1,Y_7,F_7,F''_6
& \textup{definition of }F_6\textup{, }(\ref{item:5.1}),(\ref{item:5.4}),(\ref{item:5.5}),\textup{ definition of }Y''_1
\end{array}
\]

Thus every step is legal, and the sequence ends with $I_2=\gamma$. Hence Lemma \ref{claim:CoverI} applies. \subcaseend

\noindent\phantomsection\label{subcase:1.5}\textbf{Subcase 1.5.} Suppose $\varphi(3,5,7)=(\beta,\gamma,\varepsilon)$.

Since $\varphi(1,2,3,4,5,6,7)=(\alpha,\beta,\beta,\gamma,\gamma,\delta,\varepsilon)$, the colors allowed by the cycle are
$Y_1,F_1:\{\gamma,\delta\}$,
$Y_2:\{\gamma,\delta,\varepsilon\}$,
$Y_3,F_3:\{\alpha,\delta,\varepsilon\}$,
$Y_7:\{\beta,\gamma\}$,
$F_2:\{\delta,\varepsilon\}$,
$F_4:\{\alpha,\varepsilon\}$,
$F_5:\{\alpha,\beta\}$,
$F_6:\{\beta\}$, and
$F_7:\{\gamma\}$.

Let $Y'_3=\{y\in Y_3: y \text{ has a neighbor in }F_4\}$ and let $Y''_3=Y_3\setminus Y'_3$. Then $Y''_3$ is anticomplete to $F_4$, and by Lemma \ref{claim:thm3-star}, $Y'_3$ is anticomplete to $F_1$. Let $F'_3=\{f\in F_3: f \text{ has a neighbor in }Y_2\}$ and let $F''_3=F_3\setminus F'_3$. Then $F''_3$ is anticomplete to $Y_2$. Let $F'_5=\{f\in F_5: f \text{ has a neighbor in }Y_7\}$ and let $F''_5=F_5\setminus F'_5$. Then $F''_5$ is anticomplete to $Y_7$, and by Lemma \ref{claim:thm3-3star}, $F'_3$ is anticomplete to $F'_5$.

The following table gives the recoloring sequence and verifies each step.
\[
\begin{array}{c|c|c}
\textup{step} & \textup{obstructions} & \textup{Justification: anticomplete by} \\
\hline
Y_1\to \gamma
& 4,5,F_1,Y_2,Y_7,F_7
& \textup{definition of }Y_1\textup{, }(\ref{item:5.3}),(\ref{item:5.4})
\\
F''_5\to \beta
& 2,3,Y_7,F'_5,F_6
& \textup{definition of }F_5\textup{, }(\ref{item:5.1}),(\ref{item:5.5}),\textup{ definition of }F''_5
\\
F'_3\overset{***}{\to}\alpha
& 1,Y_3,F''_3,F_4,F'_5
& \textup{definition of }F_3\textup{, }(\ref{item:5.1}),(\ref{item:5.4}),(\ref{item:5.5}),\textup{ Lemma \ref{claim:thm3-3star}}
\\
F''_3\to \varepsilon
& 7,Y_2,Y_3,F_2,F_4
& \textup{definition of }F_3\textup{, }(\ref{item:5.4}),(\ref{item:5.5}),\textup{ definition of }F''_3
\\
Y_2\to \delta
& 6,F_1,F_2,Y_3
& \textup{definition of }Y_2\textup{, }(\ref{item:5.3}),(\ref{item:5.4})
\\
Y'_3\overset{*}{\to}\delta
& 6,Y_2,F_1,F_2,Y''_3
& \textup{definition of }Y_3\textup{, }(\ref{item:5.1}),(\ref{item:5.3}),(\ref{item:5.4}),\textup{ Lemma \ref{claim:thm3-star}}
\\
Y''_3\to \varepsilon
& 7,F_2,F''_3,F_4
& \textup{definition of }Y_3\textup{, }(\ref{item:5.4}),\textup{ definition of }Y''_3
\\
F_4\to \alpha
& 1,F'_3,F'_5
& \textup{definition of }F_4\textup{, }(\ref{item:5.5})
\\
Y'_3\to \varepsilon
& 7,F_2,F''_3,Y''_3
& \textup{definition of }Y_3\textup{, }(\ref{item:5.1}),(\ref{item:5.4})
\\
F'_3\to \varepsilon
& 7,F_2,F''_3,Y_3
& \textup{definition of }F_3\textup{, }(\ref{item:5.1}),(\ref{item:5.4}),(\ref{item:5.5})
\\
F_2\to \varepsilon
& 7,F_3,Y_3
& \textup{definition of }F_2\textup{, }(\ref{item:5.4}),(\ref{item:5.5})
\end{array}
\]

Thus every step is legal, and the sequence ends with $I_4=\varepsilon$. Hence Lemma \ref{claim:CoverI} applies. \subcaseend

\noindent\phantomsection\label{subcase:1.6}\textbf{Subcase 1.6.} Suppose $\varphi(3,5,7)=(\beta,\varepsilon,\alpha)$. 

Since $\varphi(1,2,3,4,5,6,7)=(\alpha,\beta,\beta,\gamma,\varepsilon,\delta,\alpha)$, the colors allowed by the cycle are
$Y_1,Y_2,F_1:\{\gamma,\delta,\varepsilon\}$,
$Y_3:\{\alpha,\delta,\varepsilon\}$,
$Y_7:\{\beta,\gamma,\varepsilon\}$,
$F_2:\{\delta,\varepsilon\}$,
$F_3:\{\alpha,\delta\}$,
$F_4:\{\alpha\}$,
$F_5:\{\beta\}$,
$F_6:\{\beta,\gamma\}$, and
$F_7:\{\gamma,\varepsilon\}$.

Let $Y'_7=\{y\in Y_7: y \text{ has a neighbor in }F_5\}$ and let $Y''_7=Y_7\setminus Y'_7$. Then $Y''_7$ is anticomplete to $F_5$, and by Lemma \ref{claim:thm3-star}, $Y'_7$ is anticomplete to $F_1$. Similarly, let $Y'_3=\{y\in Y_3: y \text{ has a neighbor in }F_1\}$ and let $Y''_3=Y_3\setminus Y'_3$. Then $Y''_3$ is anticomplete to $F_1$, and by Lemma \ref{claim:thm3-star}, $Y'_3$ is anticomplete to $F_4$.

The following table gives the recoloring sequence and verifies each step.
\[
\begin{array}{c|c|c}
\textup{step} & \textup{obstructions} & \textup{Justification: anticomplete by} \\
\hline
F_3\to \alpha
& 1,7,Y_3,F_4
& \textup{definition of }F_3\textup{, }(\ref{item:5.4}),(\ref{item:5.5})
\\
F_6\to \beta
& 2,3,Y_7,F_5
& \textup{definition of }F_6\textup{, }(\ref{item:5.4}),(\ref{item:5.5})
\\
Y_1\to \gamma
& 4,Y_2,F_1,Y_7,F_7
& \textup{definition of }Y_1\textup{, }(\ref{item:5.3}),(\ref{item:5.4})
\\
Y_2\to \delta
& 6,F_1,F_2,Y_3
& \textup{definition of }Y_2\textup{, }(\ref{item:5.3}),(\ref{item:5.4})
\\
F_2\to \delta
& 6,Y_2,F_1,Y_3
& \textup{definition of }F_2\textup{, }(\ref{item:5.4}),(\ref{item:5.5})
\\
F_7\to \gamma
& 4,Y_1,F_1,Y_7
& \textup{definition of }F_7\textup{, }(\ref{item:5.4}),(\ref{item:5.5})
\\
Y'_7\overset{*}{\to} \gamma
& 4,Y_1,Y''_7,F_1,F_7
& \textup{definition of }Y_7\textup{, }(\ref{item:5.1}),(\ref{item:5.3}),(\ref{item:5.4}),\textup{ Lemma \ref{claim:thm3-star}}
\\
Y''_7\to \beta
& 2,3,F_6,F_5
& \textup{definition of }Y_7\textup{, }(\ref{item:5.4}),\textup{ definition of }Y''_7
\\
Y'_3\overset{*}{\to} \alpha
& 1,7,F_3,Y''_3,F_4
& \textup{definition of }Y_3\textup{, }(\ref{item:5.1}),(\ref{item:5.4}),\textup{ Lemma \ref{claim:thm3-star}}
\\
Y''_3\to \delta
& 6,Y_2,F_2,F_1
& \textup{definition of }Y_3\textup{, }(\ref{item:5.3}),(\ref{item:5.4}),\textup{ definition of }Y''_3
\\
F_1\to \varepsilon
& 5
& \textup{definition of }F_1
\\
Y_1\to \varepsilon
& 5,F_1
& \textup{definition of }Y_1\textup{, }(\ref{item:5.4})
\\
Y_2\to \varepsilon
& 5,F_1,Y_1
& \textup{definition of }Y_2\textup{, }(\ref{item:5.3}),(\ref{item:5.4})
\\
6\to \varepsilon
& 5,F_1,Y_1,Y_2
& \textup{definition of }F_1\textup{, }Y_1\textup{, and }Y_2\textup{, and }56\notin E(G)
\end{array}
\]

Thus every step is legal, and the sequence ends with $I_3=\varepsilon$. Hence Lemma \ref{claim:CoverI} applies. \subcaseend

\noindent\phantomsection\label{subcase:1.7}\textbf{Subcase 1.7.} Suppose $\varphi(3,5,7)=(\beta,\varepsilon,\delta)$.

Since $\varphi(1,2,3,4,5,6,7)=(\alpha,\beta,\beta,\gamma,\varepsilon,\delta,\delta)$, the colors allowed by the cycle are
$Y_1,F_1:\{\gamma,\varepsilon\}$,
$Y_2:\{\gamma,\delta,\varepsilon\}$,
$Y_3:\{\alpha,\delta,\varepsilon\}$,
$Y_7:\{\beta,\gamma,\varepsilon\}$,
$F_2:\{\delta,\varepsilon\}$,
$F_3:\{\alpha,\delta\}$,
$F_4:\{\alpha\}$,
$F_5:\{\alpha,\beta\}$,
$F_6:\{\beta,\gamma\}$, and
$F_7:\{\gamma,\varepsilon\}$.

Let $Y'_2=\{y\in Y_2: y \text{ has a neighbor in }F_3\}$ and let $Y''_2=Y_2\setminus Y'_2$. Then $Y''_2$ is anticomplete to $F_3$, and by Lemma \ref{claim:thm3-star}, $Y'_2$ is anticomplete to $F_7$. Similarly, let $Y'_7=\{y\in Y_7: y \text{ has a neighbor in }F_5\}$ and let $Y''_7=Y_7\setminus Y'_7$. Then $Y''_7$ is anticomplete to $F_5$. Let $F'_3=\{f\in F_3: f \text{ has a neighbor in }Y'_2\}$ and let $F''_3=F_3\setminus F'_3$. Then $F''_3$ is anticomplete to $Y'_2$. Finally, let $F'_5=\{f\in F_5: f \text{ has a neighbor in }Y'_7\}$ and let $F''_5=F_5\setminus F'_5$. Then $F''_5$ is anticomplete to $Y'_7$. Also, by Lemma \ref{claim:thm3-3star}, $F'_3$ is anticomplete to $F'_5$.

The following table gives the recoloring sequence and verifies each step.

\[
\begin{array}{c|c|c}
\textup{step} & \textup{obstructions} & \textup{Justification: anticomplete by} \\
\hline
F_6\to \beta
& 2,3,Y_7,F_5
& \textup{definition of }F_6\textup{, }(\ref{item:5.4}),(\ref{item:5.5})
\\
F_2\to \delta
& 6,7,Y_2,Y_3,F_3
& \textup{definition of }F_2\textup{, }(\ref{item:5.4}),(\ref{item:5.5})
\\
Y_3\to \delta
& 6,7,F_2,Y_2,F_3
& \textup{definition of }Y_3\textup{, }(\ref{item:5.3}),(\ref{item:5.4})
\\
F_4\to \alpha
& 1,F_3,F_5
& \textup{definition of }F_4\textup{, }(\ref{item:5.5})
\\
Y''_2\to \delta
& 6,7,F_2,Y_3,Y'_2,F_3
& \textup{definition of }Y_2\textup{, }(\ref{item:5.1}),(\ref{item:5.3}),(\ref{item:5.4}),\textup{ definition of }Y''_2
\\
F''_3\to \delta
& 6,7,F_2,Y_3,Y'_2,Y''_2,F'_3
& \textup{definition of }F_3\textup{, }(\ref{item:5.1}),(\ref{item:5.4}),(\ref{item:5.5}),\textup{ definitions of }Y''_2\textup{ and }F''_3
\\
F''_5\to \beta
& 2,3,Y_7,F'_5,F_6
& \textup{definition of }F_5\textup{, }(\ref{item:5.1}),(\ref{item:5.5}),\textup{ definitions of }Y''_7\textup{ and }F''_5
\\
F'_5\overset{***}{\to}\alpha
& 1,F_4,F'_3
& \textup{definition of }F_5\textup{, }(\ref{item:5.5}),\textup{ Lemma \ref{claim:thm3-3star}}
\\
F'_3\overset{***}{\to}\alpha
& 1,F_4,F'_5
& \textup{definition of }F_3\textup{, }(\ref{item:5.5}),\textup{ Lemma \ref{claim:thm3-3star}}
\\
Y_7\to \beta
& 2,3,F_6,F''_5
& \textup{definition of }Y_7\textup{, }(\ref{item:5.4}),\textup{ definitions of }Y''_7\textup{ and }F''_5
\\
Y'_2\overset{*}{\to}\varepsilon
& 5,Y_1,F_1,F_7
& \textup{definition of }Y_2\textup{, }(\ref{item:5.3}),(\ref{item:5.4}),\textup{ Lemma \ref{claim:thm3-star}}
\\
F'_3\to \delta
& 6,7,F_2,Y_3,Y''_2,F''_3
& \textup{definition of }F_3\textup{, }(\ref{item:5.1}),(\ref{item:5.4}),(\ref{item:5.5}),\textup{ definition of }Y''_2
\end{array}
\]

Thus every step is legal, and the sequence ends with $I_4=\delta$. Hence Lemma \ref{claim:CoverI} applies. \subcaseend

\noindent\phantomsection\label{subcase:1.8}\textbf{Subcase 1.8.} Suppose $\varphi(3,5,7)=(\beta,\delta,\varepsilon)$. 

Since $\varphi(1,2,3,4,5,6,7)=(\alpha,\beta,\beta,\gamma,\delta,\delta,\varepsilon)$, the colors allowed by the cycle are
$Y_1,F_1:\{\gamma,\delta\}$,
$Y_2:\{\gamma,\delta,\varepsilon\}$,
$Y_3:\{\alpha,\delta,\varepsilon\}$,
$Y_7, F_6:\{\beta,\gamma\}$,
$F_2:\{\delta,\varepsilon\}$,
$F_3,F_4:\{\alpha,\varepsilon\}$,
$F_5:\{\alpha,\beta\}$, and
$F_7:\{\gamma\}$.

Let $Y'_3=\{y\in Y_3: y \text{ has a neighbor in }F_4\}$ and let $Y''_3=Y_3\setminus Y'_3$. Then $Y''_3$ is anticomplete to $F_4$, and by Lemma \ref{claim:thm3-star}, $Y'_3$ is anticomplete to $F_1$.

The following table gives the recoloring sequence and verifies each step.
\[
\begin{array}{c|c|c}
\textup{step} & \textup{obstructions} & \textup{Justification: anticomplete by} \\
\hline
F_6\to \beta
& 2,3,Y_7,F_5
& \textup{definition of }F_6\textup{, }(\ref{item:5.4}),(\ref{item:5.5})
\\
Y_1\to \gamma
& 4,Y_2,F_1,Y_7,F_7
& \textup{definition of }Y_1\textup{, }(\ref{item:5.3}),(\ref{item:5.4})
\\
Y''_3\to \varepsilon
& 7,Y_2,Y'_3,F_2,F_3,F_4
& \textup{definition of }Y_3\textup{, }(\ref{item:5.1}),(\ref{item:5.3}),(\ref{item:5.4}),\textup{ definition of }Y''_3
\\
Y'_3\overset{*}{\to} \delta
& 5,6,Y_2,F_1,F_2
& \textup{definition of }Y_3\textup{, }(\ref{item:5.3}),(\ref{item:5.4}),\textup{ Lemma \ref{claim:thm3-star}}
\\
F_4\to \alpha
& 1,F_3,F_5
& \textup{definition of }F_4\textup{, }(\ref{item:5.5})
\\
Y'_3\to \varepsilon
& 7,Y_2,F_2,F_3,Y''_3
& \textup{definition of }Y_3\textup{, }(\ref{item:5.1}),(\ref{item:5.3}),(\ref{item:5.4})
\\
Y_2\to \delta
& 5,6,F_1,F_2
& \textup{definition of }Y_2\textup{, }(\ref{item:5.4})
\\
F_3\to \varepsilon
& 7,F_2,Y_3
& \textup{definition of }F_3\textup{, }(\ref{item:5.4}),(\ref{item:5.5})
\\
F_2\to \varepsilon
& 7,F_3,Y_3
& \textup{definition of }F_2\textup{, }(\ref{item:5.4}),(\ref{item:5.5})
\end{array}
\]

Thus every step is legal, and the sequence ends with $I_4=\varepsilon$. Hence Lemma \ref{claim:CoverI} applies. \subcaseend

\noindent\phantomsection\label{subcase:1.9}\textbf{Subcase 1.9.} Suppose $\varphi(3,5,7)=(\gamma,\delta,\varepsilon)$.

Since $\varphi(1,2,3,4,5,6,7)=(\alpha,\beta,\gamma,\gamma,\delta,\delta,\varepsilon)$, the colors allowed by the cycle are
$Y_1:\{\gamma,\delta\}$,
$F_1:\{\delta\}$,
$Y_2,F_2:\{\delta,\varepsilon\}$,
$Y_3:\{\alpha,\delta,\varepsilon\}$,
$F_3:\{\alpha,\varepsilon\}$,
$Y_7, F_6:\{\beta,\gamma\}$,
$F_4:\{\alpha,\beta,\varepsilon\}$,
$F_5:\{\alpha,\beta\}$, and
$F_7:\{\gamma\}$.

Let $Y'_1=\{y\in Y_1: y \text{ has a neighbor in }F_6\}$ and let $Y''_1=Y_1\setminus Y'_1$. Then $Y''_1$ is anticomplete to $F_6$, and by Lemma \ref{claim:thm3-star}, $Y'_1$ is anticomplete to $F_2$. Similarly, let $Y'_3=\{y\in Y_3: y \text{ has a neighbor in }F_4\}$ and let $Y''_3=Y_3\setminus Y'_3$. Then $Y''_3$ is anticomplete to $F_4$, and by Lemma \ref{claim:thm3-star}, $Y'_3$ is anticomplete to $F_1$. Now let $F'_4=\{f\in F_4: f \text{ has a neighbor in }Y'_3\}$ and let $F''_4=F_4\setminus F'_4$, and let $F'_6=\{f\in F_6: f \text{ has a neighbor in }Y'_1\}$ and let $F''_6=F_6\setminus F'_6$. Then $F''_4$ is anticomplete to $Y'_3$, $F''_6$ is anticomplete to $Y'_1$, and by Lemma \ref{claim:thm3-3star}, $F'_4$ is anticomplete to $F'_6$.

The following table gives the recoloring sequence and verifies each step.
\[
\begin{array}{c|c|c}
\textup{step} & \textup{obstructions} & \textup{Justification: anticomplete by} \\
\hline
Y_2\to \delta
& 5,6,Y_1,F_1,F_2,Y_3
& \textup{definition of }Y_2\textup{, }(\ref{item:5.3}),(\ref{item:5.4})
\\
F_3\to \varepsilon
& 7,F_2,Y_3,F_4
& \textup{definition of }F_3\textup{, }(\ref{item:5.4}),(\ref{item:5.5})
\\
Y_7\to \gamma
& 3,4,Y_1,F_6,F_7
& \textup{definition of }Y_7\textup{, }(\ref{item:5.3}),(\ref{item:5.4})
\\
F_5\to \beta
& 2,F_4,F_6
& \textup{definition of }F_5\textup{, }(\ref{item:5.5})
\\
Y''_3\to \varepsilon
& 7,F_2,F_3,Y'_3,F_4
& \textup{definition of }Y_3\textup{, }(\ref{item:5.1}),(\ref{item:5.4}),\textup{ definition of }Y''_3
\\
Y''_1\to \gamma
& 3,4,Y_7,F_6,F_7,Y'_1
& \textup{definition of }Y_1\textup{, }(\ref{item:5.1}),(\ref{item:5.3}),(\ref{item:5.4}),\textup{ definition of }Y''_1
\\
F''_4\to \alpha
& 1,Y'_3,F'_4
& \textup{definition of }F_4\textup{, }(\ref{item:5.1}),\textup{ definition of }F''_4
\\
F''_6\to \gamma
& 3,4,Y_7,F_7,Y'_1,Y''_1,F'_6
& \textup{definition of }F_6\textup{, }(\ref{item:5.1}),(\ref{item:5.4}),(\ref{item:5.5}),\textup{ definition of }F''_6
\\
F'_4\overset{***}{\to}\beta
& 2,F_5,F'_6
& \textup{definition of }F_4\textup{, }(\ref{item:5.5}),\textup{ Lemma \ref{claim:thm3-3star}}
\\
F'_6\overset{***}{\to}\beta
& 2,F_5,F'_4
& \textup{definition of }F_6\textup{, }(\ref{item:5.5}),\textup{ Lemma \ref{claim:thm3-3star}}
\\
Y'_1\to \gamma
& 3,4,Y_7,F_7,Y''_1,F''_6
& \textup{definition of }Y_1\textup{, }(\ref{item:5.1}),(\ref{item:5.3}),(\ref{item:5.4}),\textup{ definition of }F''_6
\\
Y'_3\overset{*}{\to}\delta
& 5,6,Y_2,F_1,F_2
& \textup{definition of }Y_3\textup{, }(\ref{item:5.3}),(\ref{item:5.4}),\textup{ Lemma \ref{claim:thm3-star}}
\\
Y'_3\to \varepsilon
& 7,F_2,F_3,Y''_3
& \textup{definition of }Y_3\textup{, }(\ref{item:5.1}),(\ref{item:5.4})
\\
F_2\to \varepsilon
& 7,F_3,Y_3
& \textup{definition of }F_2\textup{, }(\ref{item:5.4}),(\ref{item:5.5})
\end{array}
\]

Thus every step is legal, and the sequence ends with $I_4=\varepsilon$. Hence Lemma \ref{claim:CoverI} applies. \subcaseend

\noindent\phantomsection\label{subcase:1.10}\textbf{Subcase 1.10.} Suppose $\varphi(3,5,7)=(\gamma,\varepsilon,\alpha)$.

Since $\varphi(1,2,3,4,5,6,7)=(\alpha,\beta,\gamma,\gamma,\varepsilon,\delta,\alpha)$, the colors allowed by the cycle are
$Y_1:\{\gamma,\delta,\varepsilon\}$,
$Y_2, F_1, F_2:\{\delta,\varepsilon\}$,
$Y_3:\{\alpha,\delta,\varepsilon\}$,
$Y_7:\{\beta,\gamma,\varepsilon\}$,
$F_3:\{\alpha,\delta\}$,
$F_4:\{\alpha,\beta\}$,
$F_5:\{\beta\}$,
$F_6:\{\beta,\gamma\}$, and
$F_7:\{\gamma,\varepsilon\}$.

Let $Y'_3=\{y\in Y_3: y \text{ has a neighbor in }F_1\}$ and let $Y''_3=Y_3\setminus Y'_3$. Then $Y''_3$ is anticomplete to $F_1$, and by Lemma \ref{claim:thm3-star}, $Y'_3$ is anticomplete to $F_4$. Similarly, let $Y'_1=\{y\in Y_1: y \text{ has a neighbor in }F_6\}$ and let $Y''_1=Y_1\setminus Y'_1$. Then $Y''_1$ is anticomplete to $F_6$, and by Lemma \ref{claim:thm3-star}, $Y'_1$ is anticomplete to $F_2$.

Now let $F'_4=\{f\in F_4: f \text{ has a neighbor in }Y''_3\}$ and let $F''_4=F_4\setminus F'_4$, and let $F'_6=\{f\in F_6: f \text{ has a neighbor in }Y'_1\}$ and let $F''_6=F_6\setminus F'_6$. Then $F''_4$ is anticomplete to $Y''_3$, $F''_6$ is anticomplete to $Y'_1$, and by Lemma \ref{claim:thm3-3star}, $F'_4$ is anticomplete to $F'_6$.

The following table gives the recoloring sequence and verifies each step.
\[
\begin{array}{c|c|c}
\textup{step} & \textup{obstructions} & \textup{Justification: anticomplete by} \\
\hline
Y'_3\overset{*}{\to}\alpha
& 1,7,Y''_3,F_3,F_4
& \textup{definition of }Y_3\textup{, }(\ref{item:5.1}),(\ref{item:5.4}),\textup{ Lemma \ref{claim:thm3-star}}
\\
F''_6\to\gamma
& 3,4,Y'_1,Y''_1,Y_7,F_7,F'_6
& \textup{definition of }F_6\textup{, }(\ref{item:5.1}),(\ref{item:5.4}),(\ref{item:5.5}),\textup{ definitions of }Y''_1\textup{ and }F''_6
\\
Y_7\to\gamma
& 3,4,Y_1,F''_6,F'_6,F_7
& \textup{definition of }Y_7\textup{, }(\ref{item:5.3}),(\ref{item:5.4})
\\
F'_4\overset{***}{\to}\beta
& 2,F''_4,F_5,F'_6
& \textup{definition of }F_4\textup{, }(\ref{item:5.1}),(\ref{item:5.5}),\textup{ Lemma \ref{claim:thm3-3star}}
\\
Y''_3\to\alpha
& 1,7,Y'_3,F_3,F''_4
& \textup{definition of }Y_3\textup{, }(\ref{item:5.1}),(\ref{item:5.4}),\textup{ definition of }F''_4
\\
Y'_1\overset{*}{\to}\varepsilon
& 5,Y''_1,Y_2,F_1,F_2,F_7
& \textup{definition of }Y_1\textup{, }(\ref{item:5.1}),(\ref{item:5.3}),(\ref{item:5.4}),\textup{ Lemma \ref{claim:thm3-star}}
\\
F'_6\to\gamma
& 3,4,Y''_1,Y_7,F_7,F''_6
& \textup{definition of }F_6\textup{, }(\ref{item:5.1}),(\ref{item:5.4}),(\ref{item:5.5}),\textup{ definition of }Y''_1
\\
F_7\to\gamma
& 3,4,Y''_1,Y_7,F_6
& \textup{definition of }F_7\textup{, }(\ref{item:5.4}),(\ref{item:5.5})
\end{array}
\]

Thus every step is legal, and the sequence ends with $I_2=\gamma$. Hence Lemma \ref{claim:CoverI} applies. \subcaseend

\noindent\phantomsection\label{subcase:1.11}\textbf{Subcase 1.11.} Suppose $\varphi(3,5,7)=(\gamma,\varepsilon,\delta)$. 

Since $\varphi(1,2,3,4,5,6,7)=(\alpha,\beta,\gamma,\gamma,\varepsilon,\delta,\delta)$, the colors allowed by the cycle are
$Y_1:\{\gamma,\varepsilon\}$,
$F_1:\{\varepsilon\}$,
$Y_2,F_2:\{\delta,\varepsilon\}$,
$Y_3:\{\alpha,\delta,\varepsilon\}$,
$F_3:\{\alpha,\delta\}$,
$Y_7:\{\beta,\gamma,\varepsilon\}$,
$F_4,F_5:\{\alpha,\beta\}$,
$F_6:\{\beta,\gamma\}$, and
$F_7:\{\gamma,\varepsilon\}$.

The following table gives the recoloring sequence and verifies each step.
\[
\begin{array}{c|c|c}
\textup{step} & \textup{obstructions} & \textup{Justification: anticomplete by} \\
\hline
F_2\to \delta
& 6,7,Y_2,Y_3,F_3
& \textup{definition of }F_2\textup{, }(\ref{item:5.4}),(\ref{item:5.5})
\\
Y_7\to \gamma
& 3,4,Y_1,F_6,F_7
& \textup{definition of }Y_7\textup{, }(\ref{item:5.3}),(\ref{item:5.4})
\\
Y_3\to \delta
& 6,7,Y_2,F_2,F_3
& \textup{definition of }Y_3\textup{, }(\ref{item:5.3}),(\ref{item:5.4})
\\
F_7\to \gamma
& 3,4,Y_1,Y_7,F_6
& \textup{definition of }F_7\textup{, }(\ref{item:5.4}),(\ref{item:5.5})
\\
Y_1\to \varepsilon
& 5,F_1,Y_2
& \textup{definition of }Y_1\textup{, }(\ref{item:5.3}),(\ref{item:5.4})
\\
Y_2\to \varepsilon
& 5,F_1,Y_1
& \textup{definition of }Y_2\textup{, }(\ref{item:5.3}),(\ref{item:5.4})
\\
6\to \varepsilon
& 5,F_1,Y_1,Y_2
& \textup{definition of }F_1\textup{, }Y_1\textup{, and }Y_2\textup{, and }56\notin E(G)
\end{array}
\]

Thus every step is legal, and the sequence ends with $I_3=\varepsilon$. Hence Lemma \ref{claim:CoverI} applies. \subcaseend

\noindent\phantomsection\label{subcase:1.12}\textbf{Subcase 1.12.} Suppose $\varphi(3,5,7)=(\varepsilon,\gamma,\alpha)$. 

Since $\varphi(1,2,3,4,5,6,7)=(\alpha,\beta,\varepsilon,\gamma,\gamma,\delta,\alpha)$, the colors allowed by the cycle are
$Y_1:\{\gamma,\delta,\varepsilon\}$,
$Y_2:\{\gamma,\delta\}$,
$Y_3,F_3:\{\alpha,\delta\}$,
$Y_7:\{\beta,\gamma,\varepsilon\}$,
$F_1:\{\gamma,\delta\}$,
$F_2:\{\delta\}$,
$F_4:\{\alpha,\beta\}$,
$F_5,F_6:\{\beta,\varepsilon\}$, and
$F_7:\{\gamma,\varepsilon\}$.

Let $Y'_7=\{y\in Y_7: y \text{ has a neighbor in }F_5\}$ and let $Y''_7=Y_7\setminus Y'_7$. Then $Y''_7$ is anticomplete to $F_5$, and by Lemma \ref{claim:thm3-star}, $Y'_7$ is anticomplete to $F_1$.

The following table gives the recoloring sequence and verifies each step.
\[
\begin{array}{c|c|c}
\textup{step} & \textup{obstructions} & \textup{Justification: anticomplete by} \\
\hline
F_3\to \alpha
& 1,7,Y_3,F_4
& \textup{definition of }F_3\textup{, }(\ref{item:5.4}),(\ref{item:5.5})
\\
Y_2\to \delta
& 6,Y_1,F_1,F_2,Y_3
& \textup{definition of }Y_2\textup{, }(\ref{item:5.3}),(\ref{item:5.4})
\\
Y_1\to \gamma
& 4,5,Y_2,F_1,Y_7,F_7
& \textup{definition of }Y_1\textup{, }(\ref{item:5.3}),(\ref{item:5.4})
\\
Y'_7\overset{*}{\to}\gamma
& 4,5,Y_1,Y''_7,F_1,F_7
& \textup{definition of }Y_7\textup{, }(\ref{item:5.1}),(\ref{item:5.3}),(\ref{item:5.4}),\textup{ Lemma \ref{claim:thm3-star}}
\\
Y''_7\to \varepsilon
& 3,F_5,F_6,F_7
& \textup{definition of }Y_7\textup{, }(\ref{item:5.4}),\textup{ definition of }Y''_7
\\
F_6\to \varepsilon
& 3,Y''_7,F_5,F_7
& \textup{definition of }F_6\textup{, }(\ref{item:5.4}),(\ref{item:5.5})
\\
F_5\to \beta
& 2,F_4
& \textup{definition of }F_5\textup{, }(\ref{item:5.5})
\\
F_4\to \beta
& 2,F_5
& \textup{definition of }F_4\textup{, }(\ref{item:5.5})
\\
1\to \beta
& 2,F_4,F_5
& \textup{definition of }F_4\textup{ and }F_5\textup{, and }12\notin E(G)
\end{array}
\]

Thus every step is legal, and the sequence ends with $I_1=\beta$. Hence Lemma \ref{claim:CoverI} applies. \subcaseend

\noindent\phantomsection\label{subcase:1.13}\textbf{Subcase 1.13.} Suppose $\varphi(3,5,7)=(\varepsilon,\gamma,\delta)$. 

Since $\varphi(1,2,3,4,5,6,7)=(\alpha,\beta,\varepsilon,\gamma,\gamma,\delta,\delta)$, the colors allowed by the cycle are
$Y_1:\{\gamma,\varepsilon\}$,
$F_1:\{\gamma\}$,
$Y_2:\{\gamma,\delta\}$,
$Y_3,F_3:\{\alpha,\delta\}$,
$Y_7:\{\beta,\gamma,\varepsilon\}$,
$F_2:\{\delta\}$,
$F_4:\{\alpha,\beta\}$,
$F_5:\{\alpha,\beta,\varepsilon\}$,
$F_6:\{\beta,\varepsilon\}$, and
$F_7:\{\gamma,\varepsilon\}$.

Let $Y'_2=\{y\in Y_2: y \text{ has a neighbor in }F_3\}$ and let $Y''_2=Y_2\setminus Y'_2$. Then $Y''_2$ is anticomplete to $F_3$, and by Lemma \ref{claim:thm3-star}, $Y'_2$ is anticomplete to $F_7$. Similarly, let $Y'_7=\{y\in Y_7: y \text{ has a neighbor in }F_5\}$ and let $Y''_7=Y_7\setminus Y'_7$. Then $Y''_7$ is anticomplete to $F_5$. Let $F'_3=\{f\in F_3: f \text{ has a neighbor in }Y'_2\}$ and let $F''_3=F_3\setminus F'_3$. Then $F''_3$ is anticomplete to $Y'_2$. Finally, let $F'_5=\{f\in F_5: f \text{ has a neighbor in }Y'_7\}$ and let $F''_5=F_5\setminus F'_5$. Then $F''_5$ is anticomplete to $Y'_7$. Also, by Lemma \ref{claim:thm3-3star}, $F'_3$ is anticomplete to $F'_5$.

The following table gives the recoloring sequence and verifies each step.
\[
\begin{array}{c|c|c}
\textup{step} & \textup{obstructions} & \textup{Justification: anticomplete by} \\
\hline
F_2\to \delta
& 6,7,Y_2,Y_3,F_3
& \textup{definition of }F_2\textup{, }(\ref{item:5.4}),(\ref{item:5.5})
\\
Y_1\to \gamma
& 4,5,F_1,Y_2,Y_7,F_7
& \textup{definition of }Y_1\textup{, }(\ref{item:5.3}),(\ref{item:5.4})
\\
Y_3\to \delta
& 6,7,F_2,Y_2,F_3
& \textup{definition of }Y_3\textup{, }(\ref{item:5.3}),(\ref{item:5.4})
\\
F''_3\to \delta
& 6,7,F_2,Y_3,Y''_2,Y'_2,F'_3
& \textup{definition of }F_3\textup{, }(\ref{item:5.1}),(\ref{item:5.4}),(\ref{item:5.5}),\textup{ definitions of }Y''_2\textup{ and }F''_3
\\
F'_5\overset{***}{\to}\alpha
& 1,F''_5,F'_3,F_4
& \textup{definition of }F_5\textup{, }(\ref{item:5.1}),(\ref{item:5.5}),\textup{ Lemma \ref{claim:thm3-3star}}
\\
F''_5\to \beta
& 2,Y_7,F_4,F_6
& \textup{definition of }F_5\textup{, }(\ref{item:5.5}),\textup{ definitions of }Y''_7\textup{ and }F''_5
\\
F_4\to \alpha
& 1,F'_3,F'_5
& \textup{definition of }F_4\textup{, }(\ref{item:5.5})
\\
Y_7\to \beta
& 2,F_6,F''_5
& \textup{definition of }Y_7\textup{, }(\ref{item:5.4}),\textup{ definitions of }Y''_7\textup{ and }F''_5
\\
F_7\to \varepsilon
& 3,F_6,Y_7
& \textup{definition of }F_7\textup{, }(\ref{item:5.4}),(\ref{item:5.5})
\\
Y'_2\to \gamma
& 4,5,Y_1,Y''_2,F_1
& \textup{definition of }Y_2\textup{, }(\ref{item:5.1}),(\ref{item:5.3}),(\ref{item:5.4})
\\
F'_3\to \delta
& 6,7,F_2,Y_3,F''_3,Y''_2
& \textup{definition of }F_3\textup{, }(\ref{item:5.1}),(\ref{item:5.4}),(\ref{item:5.5}),\textup{ definition of }Y''_2
\end{array}
\]

Thus every step is legal, and the sequence ends with $I_4=\delta$. Hence Lemma \ref{claim:CoverI} applies. \subcaseend

\noindent\phantomsection\label{subcase:1.14}\textbf{Subcase 1.14.} Suppose $\varphi(3,5,7)=(\varepsilon,\delta,\alpha)$. 

Since $\varphi(1,2,3,4,5,6,7)=(\alpha,\beta,\varepsilon,\gamma,\delta,\delta,\alpha)$, the colors allowed by the cycle are
$Y_1:\{\gamma,\delta,\varepsilon\}$,
$Y_2:\{\gamma,\delta\}$,
$F_2:\{\delta\}$,
$Y_3:\{\alpha,\delta\}$,
$Y_7:\{\beta,\gamma,\varepsilon\}$,
$F_1:\{\gamma,\delta\}$,
$F_3:\{\alpha\}$,
$F_4:\{\alpha,\beta\}$,
$F_5:\{\beta,\varepsilon\}$,
$F_6:\{\beta,\gamma,\varepsilon\}$, and
$F_7:\{\gamma,\varepsilon\}$.

Let $Y'_7=\{y\in Y_7: y \text{ has a neighbor in }F_5\}$ and let $Y''_7=Y_7\setminus Y'_7$. Then $Y''_7$ is anticomplete to $F_5$, and by Lemma \ref{claim:thm3-star}, $Y'_7$ is anticomplete to $F_1$. Similarly, let $Y'_1=\{y\in Y_1: y \text{ has a neighbor in }F_6\}$ and let $Y''_1=Y_1\setminus Y'_1$. Then $Y''_1$ is anticomplete to $F_6$, and by Lemma \ref{claim:thm3-star}, $Y'_1$ is anticomplete to $F_2$. Also, let $Y'_3=\{y\in Y_3: y \text{ has a neighbor in }F_4\}$ and let $Y''_3=Y_3\setminus Y'_3$. Then $Y''_3$ is anticomplete to $F_4$.

Now let $F'_6=\{f\in F_6: f \text{ has a neighbor in }Y'_1\}$ and let $F''_6=F_6\setminus F'_6$. Then $F''_6$ is anticomplete to $Y'_1$. Finally, let $F'_4=\{f\in F_4: f \text{ has a neighbor in }Y'_3\}$ and let $F''_4=F_4\setminus F'_4$. Then $F''_4$ is anticomplete to $Y'_3$. Also, by Lemma \ref{claim:thm3-3star}, $F'_4$ is anticomplete to $F'_6$.

The following table gives the recoloring sequence and verifies each step.
\[
\begin{array}{c|c|c}
\textup{step} & \textup{obstructions} & \textup{Justification: anticomplete by} \\
\hline
Y_2\to\delta
& 5,6,Y_1,F_1,F_2,Y_3
& \textup{definition of }Y_2\textup{, }(\ref{item:5.3}),(\ref{item:5.4})
\\
F_3\to\alpha
& 1,7,Y_3,F_4
& \textup{definition of }F_3\textup{, }(\ref{item:5.4}),(\ref{item:5.5})
\\
Y'_7\overset{*}{\to}\gamma
& 4,Y_1,Y''_7,F_1,F_6,F_7
& \textup{definition of }Y_7\textup{, }(\ref{item:5.1}),(\ref{item:5.3}),(\ref{item:5.4}),\textup{ Lemma \ref{claim:thm3-star}}
\\
F_5\to\beta
& 2,Y''_7,F_4,F_6
& \textup{definition of }F_5\textup{, }(\ref{item:5.5}),\textup{ definition of }Y''_7
\\
Y_7\to\varepsilon
& 3,Y_1,F_6,F_7
& \textup{definition of }Y_7\textup{, }(\ref{item:5.3}),(\ref{item:5.4})
\\
F_7\to\varepsilon
& 3,Y_1,Y_7,F_6
& \textup{definition of }F_7\textup{, }(\ref{item:5.4}),(\ref{item:5.5})
\\
F''_6\to\varepsilon
& 3,Y_1,Y_7,F_7,F'_6
& \textup{definition of }F_6\textup{, }(\ref{item:5.1}),(\ref{item:5.4}),(\ref{item:5.5}),\textup{ definitions of }Y''_1\textup{ and }F''_6
\\
F'_4\overset{***}{\to}\beta
& 2,F_5,F''_4,F'_6
& \textup{definition of }F_4\textup{, }(\ref{item:5.1}),(\ref{item:5.5}),\textup{ Lemma \ref{claim:thm3-3star}}
\\
Y_3\to\alpha
& 1,7,F_3,F''_4
& \textup{definition of }Y_3\textup{, }(\ref{item:5.4}),\textup{ definitions of }Y''_3\textup{ and }F''_4
\\
Y'_1\overset{*}{\to}\delta
& 5,6,Y''_1,Y_2,F_1,F_2
& \textup{definition of }Y_1\textup{, }(\ref{item:5.1}),(\ref{item:5.3}),(\ref{item:5.4}),\textup{ Lemma \ref{claim:thm3-star}}
\\
F'_6\to\varepsilon
& 3,Y''_1,Y_7,F_7,F''_6
& \textup{definition of }F_6\textup{, }(\ref{item:5.1}),(\ref{item:5.4}),(\ref{item:5.5}),\textup{ definition of }Y''_1
\\
4\to\varepsilon
& 3,Y''_1,Y_7,F_7,F_6
& \textup{definition of }Y_1\textup{, }Y_7\textup{, }F_6\textup{, and }F_7\textup{, and }34\notin E(G)
\end{array}
\]

Thus every step is legal, and the sequence ends with $I_2=\varepsilon$. Hence Lemma \ref{claim:CoverI} applies. \subcaseend


\noindent\phantomsection\label{subcase:2.1}\textbf{Subcase 2.1.} Suppose $\varphi(1,3,5)=(\beta,\gamma,\alpha)$. 

Since $\varphi(1,2,3,4,5,6,7)=(\beta,\gamma,\gamma,\delta,\alpha,\alpha,\beta)$, the colors allowed by the cycle are
$Y_1,F_1:\{\alpha,\delta,\varepsilon\}$,
$Y_6,Y_7:\{\gamma,\delta,\varepsilon\}$,
$F_2:\{\alpha,\varepsilon\}$,
$F_3,F_4:\{\beta,\varepsilon\}$,
$F_5:\{\gamma,\varepsilon\}$,
$F_6:\{\gamma,\delta,\varepsilon\}$, and
$F_7:\{\delta,\varepsilon\}$.

The following table gives the recoloring sequence and verifies each step.
\[
\begin{array}{c|c|c}
\textup{step} & \textup{obstructions} & \textup{Justification: anticomplete by} \\
\hline
F_4\to \beta
& 1,7,F_3
& \textup{definition of }F_4\textup{, }(\ref{item:5.5})
\\
F_1\to \alpha
& 5,6,Y_1,F_2
& \textup{definition of }F_1\textup{, }(\ref{item:5.4}),(\ref{item:5.5})
\\
F_3\to \beta
& 1,7,F_4
& \textup{definition of }F_3\textup{, }(\ref{item:5.5})
\\
Y_7\to \delta
& 4,Y_1,Y_6,F_6,F_7
& \textup{definition of }Y_7\textup{, }(\ref{item:5.3}),(\ref{item:5.4})
\\
F_5\to \gamma
& 2,3,Y_6,F_6
& \textup{definition of }F_5\textup{, }(\ref{item:5.4}),(\ref{item:5.5})
\\
Y_6\to \gamma
& 2,3,F_5,F_6
& \textup{definition of }Y_6\textup{, }(\ref{item:5.4})
\\
F_6\to \gamma
& 2,3,Y_6,F_5
& \textup{definition of }F_6\textup{, }(\ref{item:5.4}),(\ref{item:5.5})
\\
Y_1\to \delta
& 4,Y_7,F_7
& \textup{definition of }Y_1\textup{, }(\ref{item:5.3}),(\ref{item:5.4})
\\
F_7\to \delta
& 4,Y_1,Y_7
& \textup{definition of }F_7\textup{, }(\ref{item:5.4})
\\
F_2\to \varepsilon
& \emptyset
& \textup{none}
\\
F_3\to \varepsilon
& F_2
& \textup{(\ref{item:5.5})}
\\
6\to \varepsilon
& F_2,F_3
& \textup{definition of }F_2\textup{ and }F_3
\\
7\to \varepsilon
& F_2,F_3,6
& \textup{definition of }F_2\textup{ and }F_3\textup{, and }67\notin E(G)
\end{array}
\]

Thus every step is legal, and the sequence ends with $I_4=\varepsilon$. Hence Lemma \ref{claim:CoverI} applies. \subcaseend

\noindent\phantomsection\label{subcase:2.2}\textbf{Subcase 2.2.} Suppose $\varphi(1,3,5)=(\beta,\gamma,\delta)$.

Since $\varphi(1,2,3,4,5,6,7)=(\beta,\gamma,\gamma,\delta,\delta,\alpha,\beta)$, the colors allowed by the cycle are
$Y_1, F_1:\{\alpha,\delta,\varepsilon\}$,
$Y_6:\{\gamma,\varepsilon\}$,
$Y_7:\{\gamma,\delta,\varepsilon\}$,
$F_2:\{\alpha,\varepsilon\}$,
$F_3:\{\alpha,\beta,\varepsilon\}$,
$F_4:\{\beta,\varepsilon\}$,
$F_5, F_6:\{\gamma,\varepsilon\}$,
 and
$F_7:\{\delta,\varepsilon\}$.

The following table gives the recoloring sequence and verifies each step.
\[
\begin{array}{c|c|c}
\textup{step} & \textup{obstructions} & \textup{Justification: anticomplete by} \\
\hline
Y_1\to\delta
& 4,5,F_1,Y_7,F_7
& \textup{definition of }Y_1\textup{, }(\ref{item:5.3}),(\ref{item:5.4})
\\
F_3\to\beta
& 1,7,F_4
& \textup{definition of }F_3\textup{, }(\ref{item:5.5})
\\
F_2\to\alpha
& 6,F_1
& \textup{definition of }F_2\textup{, }(\ref{item:5.5})
\\
F_7\to\delta
& 4,5,Y_1,F_1,Y_7
& \textup{definition of }F_7\textup{, }(\ref{item:5.4}),(\ref{item:5.5})
\\
F_1\to\alpha
& 6,F_2
& \textup{definition of }F_1\textup{, }(\ref{item:5.5})
\\
Y_7\to\delta
& 4,5,Y_1,F_7
& \textup{definition of }Y_7\textup{, }(\ref{item:5.3}),(\ref{item:5.4})
\\
F_5\to\gamma
& 2,3,Y_6,F_6
& \textup{definition of }F_5\textup{, }(\ref{item:5.4}),(\ref{item:5.5})
\\
F_4\to\beta
& 1,7,F_3
& \textup{definition of }F_4\textup{, }(\ref{item:5.5})
\\
F_6\to\varepsilon
& Y_6
& \textup{(\ref{item:5.4})}
\\
Y_6\to\varepsilon
& F_6
& \textup{(\ref{item:5.4})}
\\
Y_7\to\varepsilon
& Y_6,F_6
& \textup{(\ref{item:5.3}),(\ref{item:5.4})}
\\
3\to\varepsilon
& Y_6,Y_7,F_6
& \textup{definition of }Y_6\textup{, }Y_7\textup{, and }F_6
\\
4\to\varepsilon
& 3,Y_6,Y_7,F_6
& \textup{definition of }Y_6\textup{, }Y_7\textup{, and }F_6\textup{, and }34\notin E(G)
\end{array}
\]

Thus every step is legal, and the sequence ends with $I_2=\varepsilon$. Hence Lemma \ref{claim:CoverI} applies. \subcaseend

\noindent\phantomsection\label{subcase:2.3}\textbf{Subcase 2.3.} Suppose $\varphi(1,3,5)=(\beta,\delta,\alpha)$. 

Since $\varphi(1,2,3,4,5,6,7)=(\beta,\gamma,\delta,\delta,\alpha,\alpha,\beta)$, the colors allowed by the cycle are
$Y_1:\{\alpha,\delta,\varepsilon\}$,
$Y_6,Y_7:\{\gamma,\delta,\varepsilon\}$,
$F_1, F_2:\{\alpha,\varepsilon\}$,
$F_3:\{\beta,\varepsilon\}$,
$F_4:\{\beta,\gamma,\varepsilon\}$,
$F_5:\{\gamma,\varepsilon\}$,
$F_6:\{\gamma,\delta,\varepsilon\}$, and
$F_7:\{\delta,\varepsilon\}$.

The following table gives the recoloring sequence and verifies each step.
\[
\begin{array}{c|c|c}
\textup{step} & \textup{obstructions} & \textup{Justification: anticomplete by} \\
\hline
Y_7\to\delta
& 3,4,Y_1,Y_6,F_6,F_7
& \textup{definition of }Y_7\textup{, }(\ref{item:5.3}),(\ref{item:5.4})
\\
F_5\to\gamma
& 2,Y_6,F_4,F_6
& \textup{definition of }F_5\textup{, }(\ref{item:5.4}),(\ref{item:5.5})
\\
F_4\to\beta
& 1,7,F_3
& \textup{definition of }F_4\textup{, }(\ref{item:5.5})
\\
F_6\to\gamma
& 2,Y_6,F_5
& \textup{definition of }F_6\textup{, }(\ref{item:5.4}),(\ref{item:5.5})
\\
Y_6\to\gamma
& 2,F_5,F_6
& \textup{definition of }Y_6\textup{, }(\ref{item:5.4})
\\
F_7\to\delta
& 3,4,Y_1,Y_7
& \textup{definition of }F_7\textup{, }(\ref{item:5.4})
\\
Y_1\to\delta
& 3,4,Y_7,F_7
& \textup{definition of }Y_1\textup{, }(\ref{item:5.3}),(\ref{item:5.4})
\\
F_1\to\alpha
& 5,6,F_2
& \textup{definition of }F_1\textup{, }(\ref{item:5.5})
\\
F_2\to\varepsilon
& F_3
& \textup{(\ref{item:5.5})}
\\
F_3\to\varepsilon
& F_2
& \textup{(\ref{item:5.5})}
\\
6\to\varepsilon
& F_2,F_3
& \textup{definition of }F_2\textup{ and }F_3
\\
7\to\varepsilon
& F_2,F_3,6
& \textup{definition of }F_2\textup{ and }F_3\textup{, and }67\notin E(G)
\end{array}
\]

Thus every step is legal, and the sequence ends with $I_4=\varepsilon$. Hence Lemma \ref{claim:CoverI} applies. \subcaseend

\noindent\phantomsection\label{subcase:2.4}\textbf{Subcase 2.4.} Suppose $\varphi(1,3,5)=(\gamma,\delta,\alpha)$.

Since $\varphi(1,2,3,4,5,6,7)=(\gamma,\gamma,\delta,\delta,\alpha,\alpha,\beta)$, the colors allowed by the cycle are
$Y_1:\{\alpha,\delta,\varepsilon\}$,
$Y_6:\{\gamma,\delta,\varepsilon\}$,
$Y_7:\{\delta,\varepsilon\}$,
$F_1:\{\alpha,\varepsilon\}$,
$F_2:\{\alpha,\beta,\varepsilon\}$,
$F_3:\{\beta,\varepsilon\}$,
$F_4:\{\beta,\gamma,\varepsilon\}$,
$F_5:\{\gamma,\varepsilon\}$,
$F_6:\{\delta,\varepsilon\}$, and
$F_7:\{\delta,\varepsilon\}$.

Let $Y'_6=\{y\in Y_6: y \text{ has a neighbor in }F_4\}$ and let $Y''_6=Y_6\setminus Y'_6$. Then $Y''_6$ is anticomplete to $F_4$. Similarly, let $Y'_1=\{y\in Y_1: y \text{ has a neighbor in }F_6\}$ and let $Y''_1=Y_1\setminus Y'_1$. Then $Y''_1$ is anticomplete to $F_6$, and by Lemma \ref{claim:thm3-star}, $Y'_1$ is anticomplete to $F_2$. Let $F'_4=\{f\in F_4: f \text{ has a neighbor in }Y'_6\}$ and let $F''_4=F_4\setminus F'_4$. Then $F''_4$ is anticomplete to $Y'_6$. Finally, let $F'_2=\{f\in F_2: f \text{ has a neighbor in }F'_4\}$ and let $F''_2=F_2\setminus F'_2$. Then $F''_2$ is anticomplete to $F'_4$. Also, by Lemma \ref{claim:thm3-2star}, $F'_2$ is anticomplete to $Y_1$.

The following table gives the recoloring sequence and verifies each step.
\[
\begin{array}{c|c|c}
\textup{step} & \textup{obstructions} & \textup{Justification: anticomplete by} \\
\hline
F_3\to\beta
& 7,F_2,F_4
& \textup{definition of }F_3\textup{, }(\ref{item:5.5})
\\
F_5\to\gamma
& 1,2,Y_6,F_4
& \textup{definition of }F_5\textup{, }(\ref{item:5.4}),(\ref{item:5.5})
\\
F_1\to\alpha
& 5,6,Y_1,F_2
& \textup{definition of }F_1\textup{, }(\ref{item:5.4}),(\ref{item:5.5})
\\
Y_7\to\delta
& 3,4,Y_1,Y_6,F_6,F_7
& \textup{definition of }Y_7\textup{, }(\ref{item:5.3}),(\ref{item:5.4})
\\
Y'_1\overset{*}{\to}\alpha
& 5,6,Y''_1,F_1,F_2
& \textup{definition of }Y_1\textup{, }(\ref{item:5.1}),(\ref{item:5.4}),\textup{ Lemma \ref{claim:thm3-star}}
\\
Y''_6\to\gamma
& 1,2,Y'_6,F_5,F_4
& \textup{definition of }Y_6\textup{, }(\ref{item:5.1}),(\ref{item:5.4}),\textup{ definition of }Y''_6
\\
F_6\to\delta
& 3,4,Y'_6,Y_7,F_7,Y''_1
& \textup{definition of }F_6\textup{, }(\ref{item:5.4}),(\ref{item:5.5}),\textup{ definition of }Y''_1
\\
F'_2\overset{**}{\to}\alpha
& 5,6,Y_1,F_1,F''_2
& \textup{definition of }F_2\textup{, }(\ref{item:5.1}),(\ref{item:5.5}),\textup{ Lemma \ref{claim:thm3-2star}}
\\
F'_4\to\beta
& 7,F_3,F''_4,F''_2
& \textup{definition of }F_4\textup{, }(\ref{item:5.1}),(\ref{item:5.5}),\textup{ definition of }F''_2
\\
Y'_6\to\gamma
& 1,2,Y''_6,F_5,F''_4
& \textup{definition of }Y_6\textup{, }(\ref{item:5.1}),(\ref{item:5.4}),\textup{ definition of }F''_4
\\
Y''_1\to\delta
& 3,4,Y_7,F_6,F_7
& \textup{definition of }Y_1\textup{, }(\ref{item:5.3}),(\ref{item:5.4}),\textup{ definition of }Y''_1
\\
F''_2 \overset{*}{\to} \alpha
& 5,6,Y'_1,F_1,F'_2
& \textup{definition of }F_2\textup{, }(\ref{item:5.1}),(\ref{item:5.5}),\textup{ Lemma \ref{claim:thm3-star}}
\\
F''_4\to\gamma
& 1,2,Y_6,F_5
& \textup{definition of }F_4\textup{, }(\ref{item:5.5}),\textup{ definitions of }Y''_6\textup{ and }F''_4
\\
Y_1\to\varepsilon
& F_7
& \textup{(\ref{item:5.4})}
\\
F_1\to\varepsilon
& Y_1,F_7
& \textup{(\ref{item:5.4}),(\ref{item:5.5})}
\\
F_7\to\varepsilon
& Y_1,F_1
& \textup{(\ref{item:5.4}),(\ref{item:5.5})}
\\
5\to\varepsilon
& Y_1,F_1,F_7
& \textup{definition of }Y_1\textup{, }F_1\textup{, and }F_7
\end{array}
\]

Thus every step is legal, and the sequence ends with $I_3=\varepsilon$. Hence Lemma \ref{claim:CoverI} applies. \subcaseend

\noindent\phantomsection\label{subcase:2.5}\textbf{Subcase 2.5.} Suppose $\varphi(1,3,5)=(\beta,\gamma,\varepsilon)$.

Since $\varphi(1,2,3,4,5,6,7)=(\beta,\gamma,\gamma,\delta,\varepsilon,\alpha,\beta)$, the colors allowed by the cycle are
$Y_1:\{\alpha,\delta,\varepsilon\}$,
$Y_6, F_6:\{\gamma,\delta\}$,
$Y_7:\{\gamma,\delta,\varepsilon\}$,
$F_1:\{\alpha,\delta,\varepsilon\}$,
$F_2:\{\alpha,\varepsilon\}$,
$F_3:\{\alpha,\beta\}$,
$F_4:\{\beta\}$,
$F_5:\{\gamma\}$,
and
$F_7:\{\delta,\varepsilon\}$.

The following table gives the recoloring sequence and verifies each step.
\[
\begin{array}{c|c|c}
\textup{step} & \textup{obstructions} & \textup{Justification: anticomplete by} \\
\hline
F_3\to\beta
& 1,7,F_4
& \textup{definition of }F_3\textup{, }(\ref{item:5.5})
\\
F_1\to\alpha
& 6,Y_1,F_2
& \textup{definition of }F_1\textup{, }(\ref{item:5.4}),(\ref{item:5.5})
\\
F_6\to\gamma
& 2,3,Y_6,Y_7,F_5
& \textup{definition of }F_6\textup{, }(\ref{item:5.4}),(\ref{item:5.5})
\\
Y_6\to\gamma
& 2,3,Y_7,F_5,F_6
& \textup{definition of }Y_6\textup{, }(\ref{item:5.3}),(\ref{item:5.4})
\\
Y_1\to\delta
& 4,Y_7,F_7
& \textup{definition of }Y_1\textup{, }(\ref{item:5.3}),(\ref{item:5.4})
\\
Y_7\to\delta
& 4,Y_1,F_7
& \textup{definition of }Y_7\textup{, }(\ref{item:5.3}),(\ref{item:5.4})
\\
F_7\to\delta
& 4,Y_1,Y_7
& \textup{definition of }F_7\textup{, }(\ref{item:5.4})
\\
F_2\to\alpha
& 6,F_1
& \textup{definition of }F_2\textup{, }(\ref{item:5.5})
\\
Y_1\to\varepsilon
& 5
& \textup{definition of }Y_1
\\
F_1\to\varepsilon
& 5,Y_1
& \textup{definition of }F_1\textup{, }(\ref{item:5.4})
\\
F_7\to\varepsilon
& 5,Y_1,F_1
& \textup{definition of }F_7\textup{, }(\ref{item:5.4}),(\ref{item:5.5})
\end{array}
\]

Thus every step is legal, and the sequence ends with $I_3=\varepsilon$. Hence Lemma \ref{claim:CoverI} applies. \subcaseend

\noindent\phantomsection\label{subcase:2.6}\textbf{Subcase 2.6.} Suppose $\varphi(1,3,5)=(\beta,\varepsilon,\alpha)$.

Since $\varphi(1,2,3,4,5,6,7)=(\beta,\gamma,\varepsilon,\delta,\alpha,\alpha,\beta)$, the colors allowed by the cycle are
$Y_1:\{\alpha,\delta,\varepsilon\}$,
$Y_6,Y_7, F_6:\{\gamma,\delta,\varepsilon\}$,
$F_1:\{\alpha,\delta\}$,
$F_2:\{\alpha\}$,
$F_3:\{\beta\}$,
$F_4:\{\beta,\gamma\}$,
$F_5:\{\gamma,\varepsilon\}$,
and
$F_7:\{\delta,\varepsilon\}$.

The following table gives the recoloring sequence and verifies each step.
\[
\begin{array}{c|c|c}
\textup{step} & \textup{obstructions} & \textup{Justification: anticomplete by} \\
\hline
F_1\to\alpha
& 5,6,Y_1,F_2
& \textup{definition of }F_1\textup{, }(\ref{item:5.4}),(\ref{item:5.5})
\\
F_4\to\beta
& 1,7,F_3
& \textup{definition of }F_4\textup{, }(\ref{item:5.5})
\\
Y_7\to\delta
& 4,Y_1,Y_6,F_6,F_7
& \textup{definition of }Y_7\textup{, }(\ref{item:5.3}),(\ref{item:5.4})
\\
F_5\to\gamma
& 2,Y_6,F_6
& \textup{definition of }F_5\textup{, }(\ref{item:5.4}),(\ref{item:5.5})
\\
Y_7\to\varepsilon
& 3,Y_1,Y_6,F_6,F_7
& \textup{definition of }Y_7\textup{, }(\ref{item:5.3}),(\ref{item:5.4})
\\
Y_6\to\gamma
& 2,F_5,F_6
& \textup{definition of }Y_6\textup{, }(\ref{item:5.4})
\\
F_6\to\gamma
& 2,Y_6,F_5
& \textup{definition of }F_6\textup{, }(\ref{item:5.4}),(\ref{item:5.5})
\\
F_1\to\delta
& 4,Y_1,F_7
& \textup{definition of }F_1\textup{, }(\ref{item:5.4}),(\ref{item:5.5})
\\
Y_1\to\delta
& 4,F_1,F_7
& \textup{definition of }Y_1\textup{, }(\ref{item:5.4})
\\
F_7\to\delta
& 4,Y_1,F_1
& \textup{definition of }F_7\textup{, }(\ref{item:5.4}),(\ref{item:5.5})
\\
5\to\delta
& 4,Y_1,F_1,F_7
& \textup{definition of }Y_1\textup{, }F_1\textup{, and }F_7\textup{, and }45\notin E(G)
\end{array}
\]

Thus every step is legal, and the sequence ends with $I_3=\delta$. Hence Lemma \ref{claim:CoverI} applies. \subcaseend

\noindent\phantomsection\label{subcase:2.7}\textbf{Subcase 2.7.} Suppose $\varphi(1,3,5)=(\beta,\varepsilon,\delta)$.

Since $\varphi(1,2,3,4,5,6,7)=(\beta,\gamma,\varepsilon,\delta,\delta,\alpha,\beta)$, the colors allowed by the cycle are
$Y_1:\{\alpha,\delta,\varepsilon\}$,
$Y_6:\{\gamma,\varepsilon\}$,
$Y_7:\{\gamma,\delta,\varepsilon\}$,
$F_1:\{\alpha,\delta\}$,
$F_2:\{\alpha\}$,
$F_3:\{\alpha,\beta\}$,
$F_4:\{\beta,\gamma\}$,
$F_5, F_6:\{\gamma,\varepsilon\}$,
and
$F_7:\{\delta,\varepsilon\}$.

The following table gives the recoloring sequence and verifies each step.
\[
\begin{array}{c|c|c}
\textup{step} & \textup{obstructions} & \textup{Justification: anticomplete by} \\
\hline
Y_1\to\delta
& 4,5,Y_7,F_1,F_7
& \textup{definition of }Y_1\textup{, }(\ref{item:5.3}),(\ref{item:5.4})
\\
F_3\to\beta
& 1,7,F_4
& \textup{definition of }F_3\textup{, }(\ref{item:5.5})
\\
F_1\to\alpha
& 6,F_2
& \textup{definition of }F_1\textup{, }(\ref{item:5.5})
\\
Y_7\to\delta
& 4,5,Y_1,F_7
& \textup{definition of }Y_7\textup{, }(\ref{item:5.3}),(\ref{item:5.4})
\\
F_5\to\gamma
& 2,Y_6,F_4,F_6
& \textup{definition of }F_5\textup{, }(\ref{item:5.4}),(\ref{item:5.5})
\\
Y_7\to\varepsilon
& 3,Y_6,F_6,F_7
& \textup{definition of }Y_7\textup{, }(\ref{item:5.3}),(\ref{item:5.4})
\\
F_1\to\delta
& 4,5,Y_1,F_7
& \textup{definition of }F_1\textup{, }(\ref{item:5.4}),(\ref{item:5.5})
\\
F_7\to\delta
& 4,5,Y_1,F_1
& \textup{definition of }F_7\textup{, }(\ref{item:5.4}),(\ref{item:5.5})
\end{array}
\]

Thus every step is legal, and the sequence ends with $I_3=\delta$. Hence Lemma \ref{claim:CoverI} applies. \subcaseend

\noindent\phantomsection\label{subcase:2.8}\textbf{Subcase 2.8.} Suppose $\varphi(1,3,5)=(\beta,\delta,\varepsilon)$.

Since $\varphi(1,2,3,4,5,6,7)=(\beta,\gamma,\delta,\delta,\varepsilon,\alpha,\beta)$, the colors allowed by the cycle are
$Y_1:\{\alpha,\delta,\varepsilon\}$,
$Y_6, F_6:\{\gamma,\delta\}$,
$Y_7:\{\gamma,\delta,\varepsilon\}$,
$F_1, F_2:\{\alpha,\varepsilon\}$,
$F_3:\{\alpha,\beta\}$,
$F_4:\{\beta,\gamma\}$,
$F_5:\{\gamma\}$,
and
$F_7:\{\delta,\varepsilon\}$.

The following table gives the recoloring sequence and verifies each step.
\[
\begin{array}{c|c|c}
\textup{step} & \textup{obstructions} & \textup{Justification: anticomplete by} \\
\hline
F_4\to\beta
& 1,7,F_3
& \textup{definition of }F_4\textup{, }(\ref{item:5.5})
\\
F_6\to\gamma
& 2,Y_6,Y_7,F_5
& \textup{definition of }F_6\textup{, }(\ref{item:5.4}),(\ref{item:5.5})
\\
Y_6\to\gamma
& 2,Y_7,F_5,F_6
& \textup{definition of }Y_6\textup{, }(\ref{item:5.3}),(\ref{item:5.4})
\\
Y_7\to\delta
& 3,4,Y_1,F_7
& \textup{definition of }Y_7\textup{, }(\ref{item:5.3}),(\ref{item:5.4})
\\
Y_1\to\delta
& 3,4,Y_7,F_7
& \textup{definition of }Y_1\textup{, }(\ref{item:5.3}),(\ref{item:5.4})
\\
F_2\to\alpha
& 6,F_1,F_3
& \textup{definition of }F_2\textup{, }(\ref{item:5.5})
\\
F_1\to\varepsilon
& 5,F_7
& \textup{definition of }F_1\textup{, }(\ref{item:5.5})
\\
F_7\to\varepsilon
& 5,F_1
& \textup{definition of }F_7\textup{, }(\ref{item:5.5})
\\
Y_1\to\varepsilon
& 5,F_1,F_7
& \textup{definition of }Y_1\textup{, }(\ref{item:5.4})
\end{array}
\]

Thus every step is legal, and the sequence ends with $I_3=\varepsilon$. Hence Lemma \ref{claim:CoverI} applies. \subcaseend

\noindent\phantomsection\label{subcase:2.9}\textbf{Subcase 2.9.} Suppose $\varphi(1,3,5)=(\gamma,\delta,\varepsilon)$.
Since $\varphi(1,2,3,4,5,6,7)=(\gamma,\gamma,\delta,\delta,\varepsilon,\alpha,\beta)$, the colors allowed by the cycle are
$Y_1:\{\alpha,\delta,\varepsilon\}$,
$Y_6:\{\gamma,\delta\}$,
$Y_7, F_7:\{\delta,\varepsilon\}$,
$F_1:\{\alpha,\varepsilon\}$,
$F_2:\{\alpha,\beta,\varepsilon\}$,
$F_3:\{\alpha,\beta\}$,
$F_4:\{\beta,\gamma\}$,
$F_5:\{\gamma\}$, and
$F_6:\{\delta\}$.

Let $Y'_6=\{y\in Y_6: y \text{ has a neighbor in }F_4\}$ and let $Y''_6=Y_6\setminus Y'_6$. Then $Y''_6$ is anticomplete to $F_4$, and by Lemma \ref{claim:thm3-star}, $Y'_6$ is anticomplete to $F_7$. Let $F'_4=\{f\in F_4: f \text{ has a neighbor in }Y'_6\}$ and let $F''_4=F_4\setminus F'_4$. Then $F''_4$ is anticomplete to $Y'_6$. Finally, let $F'_2=\{f\in F_2: f \text{ has a neighbor in }F'_4\}$ and let $F''_2=F_2\setminus F'_2$. Then $F''_2$ is anticomplete to $F'_4$. Also, by Lemma \ref{claim:thm3-2star}, $F'_2$ is anticomplete to $Y_1$.

The following table gives the recoloring sequence and verifies each step.
\[
\begin{array}{c|c|c}
\textup{step} & \textup{obstructions} & \textup{Justification: anticomplete by} \\
\hline
F_3\to\beta
& 7,F_2,F_4
& \textup{definition of }F_3\textup{, }(\ref{item:5.5})
\\
Y_7\to\delta
& 3,4,Y_1,Y_6,F_6,F_7
& \textup{definition of }Y_7\textup{, }(\ref{item:5.3}),(\ref{item:5.4})
\\
F_1\to\alpha
& 6,Y_1,F_2
& \textup{definition of }F_1\textup{, }(\ref{item:5.4}),(\ref{item:5.5})
\\
F''_4\to\gamma
& 1,2,Y'_6,Y''_6,F_5,F'_4
& \textup{definition of }F_4\textup{, }(\ref{item:5.1}),(\ref{item:5.5}),\textup{ definitions of }Y''_6\textup{ and }F''_4
\\
F'_2\overset{**}{\to}\alpha
& 6,Y_1,F_1,F''_2
& \textup{definition of }F_2\textup{, }(\ref{item:5.1}),(\ref{item:5.5}),\textup{ Lemma \ref{claim:thm3-2star}}
\\
F''_2\to\beta
& 7,F_3,F'_4
& \textup{definition of }F_2\textup{, }(\ref{item:5.5}),\textup{ definition of }F''_2
\\
Y_1\to\varepsilon
& 5,Y_7,F_7
& \textup{definition of }Y_1\textup{, }(\ref{item:5.3}),(\ref{item:5.4})
\\
Y'_6\overset{*}{\to}\delta
& 3,4,Y''_6,Y_7,F_6,F_7
& \textup{definition of }Y_6\textup{, }(\ref{item:5.1}),(\ref{item:5.3}),(\ref{item:5.4}),\textup{ Lemma \ref{claim:thm3-star}}
\\
F'_4\to\gamma
& 1,2,Y''_6,F_5,F''_4
& \textup{definition of }F_4\textup{, }(\ref{item:5.1}),(\ref{item:5.5}),\textup{ definition of }Y''_6
\end{array}
\]

Thus every step is legal, and the sequence ends with $I_1=\gamma$. Hence Lemma \ref{claim:CoverI} applies. \subcaseend

\noindent\phantomsection\label{subcase:2.10}\textbf{Subcase 2.10.} Suppose $\varphi(1,3,5)=(\gamma,\varepsilon,\alpha)$.

Since $\varphi(1,2,3,4,5,6,7)=(\gamma,\gamma,\varepsilon,\delta,\alpha,\alpha,\beta)$, the colors allowed by the cycle are
$Y_1:\{\alpha,\delta,\varepsilon\}$,
$Y_6:\{\gamma,\delta,\varepsilon\}$,
$Y_7, F_6, F_7:\{\delta,\varepsilon\}$,
$F_1:\{\alpha,\delta\}$,
$F_2:\{\alpha,\beta\}$,
$F_3:\{\beta\}$,
$F_4:\{\beta,\gamma\}$, and
$F_5:\{\gamma,\varepsilon\}$.

Let $Y'_6=\{y\in Y_6: y \text{ has a neighbor in }F_4\}$ and let $Y''_6=Y_6\setminus Y'_6$. Then $Y''_6$ is anticomplete to $F_4$, and by Lemma \ref{claim:thm3-star}, $Y'_6$ is anticomplete to $F_7$. Let $F'_4=\{f\in F_4: f \text{ has a neighbor in }Y'_6\}$ and let $F''_4=F_4\setminus F'_4$. Then $F''_4$ is anticomplete to $Y'_6$. Finally, let $F'_2=\{f\in F_2: f \text{ has a neighbor in }F'_4\}$ and let $F''_2=F_2\setminus F'_2$. Then $F''_2$ is anticomplete to $F'_4$. Also, by Lemma \ref{claim:thm3-2star}, $F'_2$ is anticomplete to $Y_1$.

The following table gives the recoloring sequence and verifies each step.
\[
\begin{array}{c|c|c}
\textup{step} & \textup{obstructions} & \textup{Justification: anticomplete by} \\
\hline
F_3\to\beta
& 7,F_2,F_4
& \textup{definition of }F_3\textup{, }(\ref{item:5.5})
\\
F_5\to\gamma
& 1,2,Y_6,F_4
& \textup{definition of }F_5\textup{, }(\ref{item:5.4}),(\ref{item:5.5})
\\
F''_4\to\gamma
& 1,2,Y'_6,Y''_6,F_5,F'_4
& \textup{definition of }F_4\textup{, }(\ref{item:5.1}),(\ref{item:5.5}),\textup{ definitions of }Y''_6\textup{ and }F''_4
\\
F''_2\to\beta
& 7,F_3,F'_2,F'_4
& \textup{definition of }F_2\textup{, }(\ref{item:5.1}),(\ref{item:5.5}),\textup{ definition of }F''_2
\\
F_1\to\alpha
& 5,6,Y_1,F'_2
& \textup{definition of }F_1\textup{, }(\ref{item:5.4}),(\ref{item:5.5})
\\
Y_1\overset{**}{\to}\alpha
& 5,6,F_1,F'_2
& \textup{definition of }Y_1\textup{, }(\ref{item:5.4}),\textup{ Lemma \ref{claim:thm3-2star}}
\\
Y'_6\overset{*}{\to}\delta
& 4,Y''_6,Y_7,F_6,F_7
& \textup{definition of }Y_6\textup{, }(\ref{item:5.1}),(\ref{item:5.3}),(\ref{item:5.4}),\textup{ Lemma \ref{claim:thm3-star}}
\\
F'_4\to\gamma
& 1,2,Y''_6,F_5,F''_4
& \textup{definition of }F_4\textup{, }(\ref{item:5.1}),(\ref{item:5.5}),\textup{ definition of }Y''_6
\end{array}
\]

Thus every step is legal, and the sequence ends with $I_1=\gamma$. Hence Lemma \ref{claim:CoverI} applies. \subcaseend

\noindent\phantomsection\label{subcase:2.11}\textbf{Subcase 2.11.} Suppose $\varphi(1,3,5)=(\gamma,\varepsilon,\delta)$. 

Since $\varphi(1,2,3,4,5,6,7)=(\gamma,\gamma,\varepsilon,\delta,\delta,\alpha,\beta)$, the colors allowed by the cycle are
$Y_1:\{\alpha,\delta,\varepsilon\}$,
$Y_6, F_5:\{\gamma,\varepsilon\}$,
$Y_7, F_7:\{\delta,\varepsilon\}$,
$F_1:\{\alpha,\delta\}$,
$F_2,F_3:\{\alpha,\beta\}$,
$F_4:\{\beta,\gamma\}$, and
$F_6:\{\varepsilon\}$.

The following table gives the recoloring sequence and verifies each step.
\[
\begin{array}{c|c|c}
\textup{step} & \textup{obstructions} & \textup{Justification: anticomplete by} \\
\hline
F_3\to\beta
& 7,F_2,F_4
& \textup{definition of }F_3\textup{, }(\ref{item:5.5})
\\
F_1\to\alpha
& 6,Y_1,F_2
& \textup{definition of }F_1\textup{, }(\ref{item:5.4}),(\ref{item:5.5})
\\
Y_1\to\delta
& 4,5,Y_7,F_7
& \textup{definition of }Y_1\textup{, }(\ref{item:5.3}),(\ref{item:5.4})
\\
F_7\to\delta
& 4,5,Y_1,Y_7
& \textup{definition of }F_7\textup{, }(\ref{item:5.4})
\\
Y_7\to\delta
& 4,5,Y_1,F_7
& \textup{definition of }Y_7\textup{, }(\ref{item:5.3}),(\ref{item:5.4})
\\
Y_6\to\varepsilon
& 3,F_5,F_6
& \textup{definition of }Y_6\textup{, }(\ref{item:5.4})
\\
F_4\to\gamma
& 1,2,F_5
& \textup{definition of }F_4\textup{, }(\ref{item:5.5})
\\
F_5\to\gamma
& 1,2,F_4
& \textup{definition of }F_5\textup{, }(\ref{item:5.5})
\end{array}
\]

Thus every step is legal, and the sequence ends with $I_1=\gamma$. Hence Lemma \ref{claim:CoverI} applies. \subcaseend

\noindent\phantomsection\label{subcase:2.12}\textbf{Subcase 2.12.} Suppose $\varphi(1,3,5)=(\varepsilon,\gamma,\alpha)$.

Since $\varphi(1,2,3,4,5,6,7)=(\varepsilon,\gamma,\gamma,\delta,\alpha,\alpha,\beta)$, the colors allowed by the cycle are
$Y_1, F_1:\{\alpha,\delta\}$,
$Y_6:\{\gamma,\delta,\varepsilon\}$,
$Y_7,F_6:\{\gamma,\delta\}$,
$F_2:\{\alpha,\beta\}$,
$F_3, F_4:\{\beta,\varepsilon\}$,
$F_5:\{\gamma,\varepsilon\}$,
and
$F_7:\{\delta\}$.

The following table gives the recoloring sequence and verifies each step.
\[
\begin{array}{c|c|c}
\textup{step} & \textup{obstructions} & \textup{Justification: anticomplete by} \\
\hline
Y_6\to\gamma
& 2,3,Y_7,F_5,F_6
& \textup{definition of }Y_6\textup{, }(\ref{item:5.3}),(\ref{item:5.4})
\\
F_3\to\beta
& 7,F_2,F_4
& \textup{definition of }F_3\textup{, }(\ref{item:5.5})
\\
F_4\to\varepsilon
& 1,F_5
& \textup{definition of }F_4\textup{, }(\ref{item:5.5})
\\
F_5\to\varepsilon
& 1,F_4
& \textup{definition of }F_5\textup{, }(\ref{item:5.5})
\\
2\to\varepsilon
& 1,F_4,F_5
& \textup{definition of }F_4\textup{ and }F_5\textup{, and }12\notin E(G)
\end{array}
\]

Thus every step is legal, and the sequence ends with $I_1=\varepsilon$. Hence Lemma \ref{claim:CoverI} applies. \subcaseend

\noindent\phantomsection\label{subcase:2.13}\textbf{Subcase 2.13.} Suppose $\varphi(1,3,5)=(\varepsilon,\gamma,\delta)$.

Since $\varphi(1,2,3,4,5,6,7)=(\varepsilon,\gamma,\gamma,\delta,\delta,\alpha,\beta)$, the colors allowed by the cycle are
$Y_1, F_1:\{\alpha,\delta\}$,
$Y_6, F_5:\{\gamma,\varepsilon\}$,
$Y_7:\{\gamma,\delta\}$,
$F_2:\{\alpha,\beta\}$,
$F_3:\{\alpha,\beta,\varepsilon\}$,
$F_4:\{\beta,\varepsilon\}$,
$F_6:\{\gamma\}$, and
$F_7:\{\delta\}$.

Let $Y'_7=\{y\in Y_7: y \text{ has a neighbor in }F_5\}$ and let $Y''_7=Y_7\setminus Y'_7$. Then $Y''_7$ is anticomplete to $F_5$, and by Lemma \ref{claim:thm3-star}, $Y'_7$ is anticomplete to $F_1$.

The following table gives the recoloring sequence and verifies each step.
\[
\begin{array}{c|c|c}
\textup{step} & \textup{obstructions} & \textup{Justification: anticomplete by} \\
\hline
Y''_7\to\gamma
& 2,3,Y_6,Y'_7,F_5,F_6
& \textup{definition of }Y_7\textup{, }(\ref{item:5.1}),(\ref{item:5.3}),(\ref{item:5.4}),\textup{ definition of }Y''_7
\\
Y_1\to\delta
& 4,5,Y'_7,F_1,F_7
& \textup{definition of }Y_1\textup{, }(\ref{item:5.3}),(\ref{item:5.4})
\\
F_7\to\delta
& 4,5,Y_1,Y'_7,F_1
& \textup{definition of }F_7\textup{, }(\ref{item:5.4}),(\ref{item:5.5})
\\
F_1\overset{*}{\to}\delta
& 4,5,Y_1,Y'_7,F_7
& \textup{definition of }F_1\textup{, }(\ref{item:5.4}),(\ref{item:5.5}),\textup{ Lemma \ref{claim:thm3-star}}
\end{array}
\]

Thus every step is legal, and the sequence ends with $I_3=\delta$. Hence Lemma \ref{claim:CoverI} applies. \subcaseend

\noindent\phantomsection\label{subcase:2.14}\textbf{Subcase 2.14.} Suppose $\varphi(1,3,5)=(\varepsilon,\delta,\alpha)$.

Since $\varphi(1,2,3,4,5,6,7)=(\varepsilon,\gamma,\delta,\delta,\alpha,\alpha,\beta)$, the colors allowed by the cycle are
$Y_1:\{\alpha,\delta\}$,
$Y_6:\{\gamma,\delta,\varepsilon\}$,
$Y_7, F_6:\{\gamma,\delta\}$,
$F_1:\{\alpha\}$,
$F_2:\{\alpha,\beta\}$,
$F_3:\{\beta,\varepsilon\}$,
$F_4:\{\beta,\gamma,\varepsilon\}$,
$F_5:\{\gamma,\varepsilon\}$, and
$F_7:\{\delta\}$.

Let $Y'_6=\{y\in Y_6: y \text{ has a neighbor in }F_4\}$ and let $Y''_6=Y_6\setminus Y'_6$. Then $Y''_6$ is anticomplete to $F_4$, and by Lemma \ref{claim:thm3-star}, $Y'_6$ is anticomplete to $F_7$. Let $F'_4=\{f\in F_4: f \text{ has a neighbor in }Y'_6\}$ and let $F''_4=F_4\setminus F'_4$. Then $F''_4$ is anticomplete to $Y'_6$. Finally, let $F'_2=\{f\in F_2: f \text{ has a neighbor in }F'_4\}$ and let $F''_2=F_2\setminus F'_2$. Then $F''_2$ is anticomplete to $F'_4$. Also, by Lemma \ref{claim:thm3-2star}, $F'_2$ is anticomplete to $Y_1$.

The following table gives the recoloring sequence and verifies each step.
\[
\begin{array}{c|c|c}
\textup{step} & \textup{obstructions} & \textup{Justification: anticomplete by} \\
\hline
F_3\to\beta
& 7,F_2,F_4
& \textup{definition of }F_3\textup{, }(\ref{item:5.5})
\\
F''_4\to\varepsilon
& 1,Y'_6,Y''_6,F_5,F'_4
& \textup{definition of }F_4\textup{, }(\ref{item:5.1}),(\ref{item:5.5}),\textup{ definitions of }Y''_6\textup{ and }F''_4
\\
F''_2\to\beta
& 7,F_3,F'_2,F'_4
& \textup{definition of }F_2\textup{, }(\ref{item:5.1}),(\ref{item:5.5}),\textup{ definition of }F''_2
\\
Y_1\overset{**}{\to}\alpha
& 5,6,F_1,F'_2
& \textup{definition of }Y_1\textup{, }(\ref{item:5.4}),\textup{ Lemma \ref{claim:thm3-2star}}
\\
Y'_6\overset{*}{\to}\delta
& 3,4,Y''_6,Y_7,F_6,F_7
& \textup{definition of }Y_6\textup{, }(\ref{item:5.1}),(\ref{item:5.3}),(\ref{item:5.4}),\textup{ Lemma \ref{claim:thm3-star}}
\\
Y''_6\to\gamma
& 2,Y_7,F_5,F_6,F'_4
& \textup{definition of }Y_6\textup{, }(\ref{item:5.3}),(\ref{item:5.4}),\textup{ definition of }Y''_6
\\
F'_4\to\varepsilon
& 1,F''_4,F_5
& \textup{definition of }F_4\textup{, }(\ref{item:5.1}),(\ref{item:5.5})
\\
F_5\to\varepsilon
& 1,F_4
& \textup{definition of }F_5\textup{, }(\ref{item:5.5})
\\
2\to\varepsilon
& 1,F_4,F_5
& \textup{definition of }F_4\textup{ and }F_5\textup{, and }12\notin E(G)
\end{array}
\]

Thus every step is legal, and the sequence ends with $I_1=\varepsilon$. Hence Lemma \ref{claim:CoverI} applies. \subcaseend

\section*{Acknowledgments}
This work was conducted during the first author's research stay funded by ANID Complementary Benefits and directed by Dr. Jessica McDonald at Auburn University.

\section*{Declaration of use of artificial intelligence}
During the preparation of this manuscript the authors used OpenAI's ChatGPT 5.5 Plus and Anthropic's Claude to assist with editing, grammar, and formatting. The authors verified and reviewed all AI-assisted edits and take full responsibility for all content within this manuscript.


\end{document}